\newtheorem{theorem}{Theorem}
\newtheorem{proposition}[theorem]{Proposition}
\newtheorem{lemma}[theorem]{Lemma}
\newtheorem{corollary}[theorem]{Corollary}
\theoremstyle{remark}
\newtheorem{remark}[theorem]{Remark}
\theoremstyle{definition}
\newtheorem{assumption}{Assumption}
\newcounter{corr}
\definecolor{violet}{rgb}{0.580,0.,0.827}
\newcommand{\corr}[3]{\typeout{Warning : a correction remains in page \thepage}
\stepcounter{corr}        
{\color{blue}\ifmmode\text{\,\sout{\ensuremath{#1}}\,}\else\sout{#1}\fi}{\color{red}#2}{\color{violet} [#3]}
}
\definecolor{mygreen}{rgb}{0,0.4,0.2}
\def\b{\boldsymbol}
\newcommand{\R}{\mathbb{R}}    
\newcommand{\N}{\mathbb{N}} 
\newcommand{\Pk}{\mathbb{P}} 
\newcommand{\M}{\mathbb{M}} 
\newcommand{\Pkw}{\widehat{\Pk}}
\newcommand{\GRAD}{\b\nabla}                    
\newcommand{\GRADs}{\b{\epsilon}}         
\newcommand{\divs}{\nabla{\cdot}} 
\newcommand{\DIV}{\b\nabla{\cdot}}               
\newcommand{\norm}[2][]{\|#2\|_{#1}}
\newcommand{\VCG}{\b U}
\newcommand{\VDL}{\b U_h(E)}
\newcommand{\VDG}{\b U_h}
\newcommand{\ZCG}{\b Z}
\newcommand{\ZDG}{\b Z_h}
\newcommand{\QCG}{P}
\newcommand{\QDG}{P_h}
\def\P0{{\Pi^{0, E}_k}}
\def\PP0{{\boldsymbol{\Pi}^{0, E}_{k-1}}}
\newcommand{\email}[1]{\href{mailto:#1}{#1}}
\def\V{{\b V}}
\def\w{{\b w}}
\def\NE{N_E} 
\def\DOF{\boldsymbol{\chi}}  
\def\DOFi{{\chi}_i} 
\def\mesh{\Omega_h}
\def\Proj{\P0}
\def\alf{{\boldsymbol{\alpha}}}
\title{A Virtual Element method for non-Newtonian fluid flows}
\author[1]{Paola F. Antonietti\footnote{\email{paola.antonietti@polimi.it}}}
\author[2]{Louren\c{c}o Beir\~{a}o da Veiga\footnote{\email{lourenco.beirao@unimib.it}}}
\author[1]{Michele Botti \footnote{\email{michele.botti@polimi.it}}}
\author[3]{Giuseppe Vacca \footnote{\email{giuseppe.vacca@uniba.it}}}
\author[1]{Marco Verani \footnote{\email{marco.verani@polimi.it}}}
\affil[1]{MOX-Laboratory for Modeling and Scientific Computing,  Dipartimento di Matematica, Politecnico di Milano, 
Piazza Leonardo da Vinci 32 - 20133 Milano, Italy}
\affil[3]{Dipartimento di Matematica e Applicazioni, 
Universit\`a degli Studi di Milano-Bicocca, 
Via Roberto Cozzi 55  - 20125 Milano, Italy}
\affil[3]{Dipartimento di Matematica, 
Universit\`a degli Studi di Bari, 
Via Edoardo Orabona 4  - 70125 Bari, Italy}
\begin{document}
\norefnames

\maketitle

\abstract{In this paper, we design and analyze a Virtual Element discretization for the steady motion of non-Newtonian, incompressible fluids. A specific stabilization, tailored to mimic the monotonicity and boundedness properties of the continuous operator, is introduced and theoretically investigated. 
The proposed method has several appealing features, including the exact enforcement of the divergence free condition and the possibility of making use of fully general polygonal meshes. 
A complete well-posedness and convergence analysis of the proposed method is presented under mild assumptions on the non-linear laws, encompassing common examples such as the Carreau--Yasuda model. Numerical experiments validating the theoretical bounds as well as demonstrating the practical capabilities of the proposed formulation are presented.}

\section{Introduction} 

In recent years, a novel approach known as Polytopal Finite Element methods has emerged. Polytopal Finite Element methods are Galerkin-type approximation schemes where the discretization space can support computational grids composed of arbitrarily polygonal or polyhedral (polytopal, for short) elements.
Several methodologies have been developed in the last decade to extend the classical (i.e. based on tetrahedral, hexahedral and prismatic meshes) finite element paradigm to non-conventional elements, see, e.g., \cite{Antonietti_et_al_review,AntoniettiGianiHouston_2013, BeiraodaVeigaBrezziMarini_2023, BeiraodaVeiga-Manzini-Lipnikov_2014, volley, CangianiGeorgoulisHouston_2014, Cockburn-Gopalakrishnan-Lazarov_2009, Cockburn2010, DiPietroDroniou_2020, Di-Pietro.Ern:15, DiPietroErnLemaire_2015} and the references therein. 
Among them, the Virtual Element Method (VEM), first introduced in~\cite{volley} for  second-order elliptic problems and subsequently expanded to cover various other differential equations, has emerged as one of the most promising polytopal approaches. 

Virtual Element Methods have been extensively developed in the last decade to approximate Newtonian fluid flows modeled by Stokes and Navier-Stokes equations. In \cite{AntoniettiBeiraoMoraVerani_2014} a novel stream formulation of the VEM for the solution of the Stokes problem, based on  suitable stream function space characterizing the divergence free subspace of discrete velocities has been proposed and analyzed. Still for the Stokes problem, VEM have been further studied in \cite{Beirao-da-Veiga.Lovadina.ea:17, Cangiani-Gyrya-Manzini:2016,Caceres-Gatica_2017,BeiraodaVeiga-Dassi-Vacca_2020,Chernov-Marcati-Mascotto_2021,Lepe-Rivera_2021,Bevilacqua-Scacchi_2022,VEM_Oseen_stab,Bevilacqua-Dassi-Scacchi-Zampini_2024}, see also \cite{Frerichs-Merdon_2022,BeiraodaVeiga-Dassi-DiPietro-Droniou_2022} where arbitrary-order pressure-robust VEM methods have been studied.
Virtual Element discretization of Navier-Stokes equations has been first studied in ~\cite{BLV:2018} and then further investigated in \cite{Gatica-Munar-Sequeira_2018,BeiraodaVeiga-Mora-Vacca_2019,Adak-Mora-Natarajan_2021,Gatica-Sequeira_2021}, cf. also the recent works \cite{Adak-Mora-Silgado_2024,Li-Hu-Feng_2024} and \cite{Caceres-Gatica-Sequeira_2017} for quasi-Newtonian Stokes flows. VEM for the coupled Navier-Stokes and heat equations have been proposed and analyzed in \cite{Antonietti-Vacca-Verani_2023,BMS-23}, whereas Least-squares type VEMs for the Stokes and Navier-Stokes problems have been recently addressed in \cite{Li-Hu-Feng_2022,Wang-Wang_2024}, respectively. We refer to the book \cite{bookVEM} for a comprehensive overview of the recent developments in Virtual Element Methods. 

On the other hand, complex fluids relevant to engineering and life-science applications often behave as non-Newtonian fluids, involving shear-rate dependent non-linear viscosity. 
Indeed, non-Newtonian fluids' rheological properties, which differ from the linear relationship between shear stress and shear rate observed in Newtonian fluids, make them applicable in a variety of applications, including for example (bio)polymer manufacturing processes, biomedical engineering (blood flow modeling, tissue engineering, and drug delivery), energy production systems through complex geological formations, and modeling of food processing to mention a few.\\

In this paper, we consider the steady motion of a non-Newtonian incompressible fluid, where the constitutive law obeys the so-called Carreau-Yasuda model, usually employed to describe pseudoplastic flow with asymptotic viscosities at zero and infinite shear rates.
From the numerical viewpoint, the numerical discretization of non-Newtonian incompressible fluid flows has been extensively addressed, starting from the seminal work of \cite{BN:1990} where a Finite Element approximation of a non-Newtonian flow model where the viscosity obeys the Carreau law or the power law is considered. Still in the framework of  Finite Element methods, improved error bounds have been proved by \cite{Sandri:93, BL:1993, Barrett.Liu:94}. More precisely,  in the pioneering papers \cite{BL:1993} and \cite{Barrett.Liu:94},  (in some case optimal) velocity and pressure error bounds in suitable  quasi-norms for a non-Newtonian flow model where the viscosity obeys the Carreau or the power law model are proved. The case of a non-degenerate power law is considered in \cite{DG_1990}.
Other notable recent contributions can be found, e.g., in \cite{Belenki.Berselli.ea:12,Hirn:13,Kreuzer.Suli:16,Kalte.Ruzi:23}.

The approximation of non-Newtonian fluid flow problems requires  numerical schemes able to represent the local features emerging from the nonlinear stress-strain relation and to handle complex unstructured and highly-adapted meshes. To this end, discretization methods that can possibly support general polyhedral have been recently explored. We mention in particular the discontinuous Galerkin and hybridizable discontinuous Galerkin methods of \cite{Malkmus.Ruzi.ea:18} and \cite{Gatica.Sequeira:15}, respectively. Additionally, Hybrid High-Order methods have been applied to the simulation of non-Newtonian fluids governed by the Stokes equations in \cite{Botti.Castanon-Quiroz.ea:21} and Navier--Stokes equations with nonlinear convection in \cite{Castanon.Di-Pietro.ea:21}.
Here, we consider the steady motion of a non-Newtonian incompressible fluid, where the constitutive law obeys the so-called Carreau-Yasuda model 
with the exponent $r$ in \eqref{eq:Carreau} such that $r\in (1,2]$ (shear thinning behavior). Our aim is to develop a Virtual Element approximation of such a problem and present a comprehensive theoretical analysis. 
More precisely,  we consider as starting point the divergence-free Virtual spaces of \cite{Beirao-da-Veiga.Lovadina.ea:17,BLV:2018} and propose a new (divergence-free) Virtual Element formulation for the numerical discretization. 
Such approach exhibits two important advantages. The first one is the possibility of using general polygonal meshes, which allow, for instance, easier descriptions of complex domains and more efficient mesh adaptive strategies. Furthermore, the proposed approach yields an exactly divergence free discrete velocity solution and error estimates for the velocity field which do not depend on the pressure variable; this is a recognized advantage already for the standard linear Stokes problem, which becomes even more relevant in the current nonlinear setting (see for instance \cite{Kreuzer_et_al} and references therein).
We develop a-priori error bounds for both the velocity and the pressure, focusing on the degenerate model with $\delta=0$ in \eqref{eq:Carreau}, which correspond to the power-law equation and which is recognized as the most complex case. 
The theoretical results could be extended to the case $\delta>0$, cf. \eqref{eq:Carreau}, combining the present results with, e.g.,  the approaches in \cite{BN:1990, BL:1993, Barrett.Liu:94}.
Our analysis requires, in particular, the development of many technical results, such as the inf-sup stability of the discrete velocities-pressures coupling in non-hilbertian norms, and the proof of the continuity and coercivity properties of our novel stabilization form, specifically tailored for the current problem. 
To the best of our knowledge, this is the first work in the literature where $O(h^{kr/2})$ velocity error bounds are proved 
for a polytopal approximation method for the degenerate case and regular solution, with $k$ denoting the polynomial order and $r \in (1,2]$ the model exponent, cf. \eqref{eq:Carreau}.  
Our theoretical estimates agree with the classical results presented, e.g., in \cite{Barrett.Liu:94} in the context of classical finite element discretizations. We demonstrate the practical capabilities of the proposed formulation in a wide set of numerical experiments.  On the one hand, the numerical results confirm the theoretical estimates, and on the other hand, we show that, despite not being covered by our theoretical analysis, the proposed formulation can be successfully employed also in the case $\delta\not=0$ in \eqref{eq:Carreau}. \\

The rest of the manuscript is organized as follows. 
The next section introduces the notation we are going to employ throughout the manuscript. The weak formulation of the model problem together with its well-posedness is discussed in Section~\ref{sec:model}. The proposed divergence-free Virtual Element discretization is described in Section~\ref{sec:VEM} and the a-priori error analysis is carried out in Section~\ref{sec:error_analysis}.
A wide set of numerical experiments validating the theoretical bounds as well as showing in practice the capabilities of the proposed formulation are presented in Section~\ref{sec:numerics}. Finally, Appendix \ref{sec:appendix} contains some technical results. 

\subsection{Notation}
The vector spaces considered hereafter are over $\mathbb{R}$. We denote by $\mathbb{R}_{+}$ the set of non-negative real numbers. Given a vector space $V$ with norm $\norm[V]{\cdot}$, the notation $V'$ denotes its dual space and ${}_{V'}\langle\cdot,\cdot\rangle_V$ the duality between $V$ and $V'$. The notation $\b v\cdot\b w$ and $\b v\times\b w$ designate the scalar and vector products of two vectors $\b v,\b w\in\mathbb{R}^d$, and $\vert \b v \vert$ denotes the Euclidean norm of $\b v$ in  $\mathbb{R}^d$.
The inner product in $\mathbb{R}^{d\times d}$ is defined for $\b\tau,\b\eta\in\mathbb{R}^{d\times d}$ by $\b\tau:\b\eta\coloneq\sum_{i,j=1}^d \tau_{i,j}\eta_{i,j}$ and the induced norm is given by $|\b\tau|:= \sqrt{\b\tau:\b\tau}$.

Let $\Omega\subset\mathbb{R}^d$ denote a bounded, connected, polyhedral open set with Lipschitz boundary $\partial\Omega$ and let $\b n$ be the outward unit normal to $\partial\Omega$. To simplify the exposition, we restrict the presentation to the two-dimensional case, i.e. $d=2$, but the analysis path remains valid in the three-dimensional case $d=3$ as well, with obvious minor technical differences.
We denote with $\b x := (x_1, x_2)$ the independent variable.
We assume that the boundary is partitioned in two disjoint subsets $\partial\Omega:=\Gamma_D\cup\Gamma_N$, 
with $|\Gamma_D|>0$, such that a Dirichlet condition is given on $\Gamma_D$ and a Neumann condition on $\Gamma_N$. 

Throughout the article, spaces of functions, vector fields, and tensor fields, defined over any $X\subset\overline{\Omega}$ are denoted by italic capitals, boldface Roman capital, and special Roman capitals, respectively. The subscript $\rm{s}$ 
denotes a space of symmetric tensor fields. For example, $L^2(X)$, $\b L^2(X)$, and $\mathbb{L}^2_s(X)$ denote the spaces of square-integrable functions, vector fields, and symmetric tensor fields, respectively. 
The notation $W^{m,r}(X)$, for $m \geq 0$ and $r\in[1,+\infty]$, with the convention that $W^{0,r}(X) :=L^r(X)$ and $W^{m,2}(X) := H^m(X)$, designate the classical Sobolev spaces.  
The trace map is denoted by $\gamma:W^{1,r}(\Omega)\to W^{1-\frac1r,r}(\partial\Omega)$.
Finally, given $\Gamma\subset\partial\Omega$, we denote by $W^{1,r}_{0,\Gamma}(\Omega)$ the subspace of $W^{1,r}(\Omega)$ spanned by functions having zero-trace on $\Gamma$.

The symbol $\nabla$ denotes the gradient for scalar functions, while  
$\GRAD$, $\b{\epsilon} := \frac{\GRAD + \GRAD^{\rm T}}2$ , and $\divs$ denote  the gradient, the symmetric gradient operator, and the divergence operator, whereas $\DIV$ denotes the vector-valued divergence operator for tensor fields.

\section{Model problem}\label{sec:model}

The incompressible flow of a non-Newtonian fluid occupying $\Omega$ and subjected to a volumetric force field $\b{f}:\Omega\to\mathbb{R}^d$ and a normal stress $\b{g}:\Gamma_N\to\mathbb{R}^d$ is described by the non-linear Stokes equations
\begin{equation}\label{eq:stokes.continuous}
  \begin{aligned} 
    -\DIV\b{\sigma}(\cdot,\b{\epsilon}(\b{u}))+\GRAD p &= 
    \b{f} &\qquad& \text{ in } \Omega,  \\
    \divs \b{u} &= 0 &\qquad& \text{ in }  \Omega, \\
    \b{\sigma}(\cdot,\b\epsilon(\b{u}))\b{n} - p\b{n} &= \b{g} &\qquad& \text{ on } \Gamma_N, \\
    \b{\gamma}(\b{u}) &= \b{0} &\qquad& \text{ on } \Gamma_D, 
  \end{aligned}
\end{equation} 
where $\b{u}:\Omega\to\mathbb{R}^d$ and $p:\Omega\to\mathbb{R}$ denote the velocity field and the pressure field, respectively. The homogeneous Dirichlet boundary condition \eqref{eq:stokes.continuous} can be generalized to non-homogeneous data by minor modifications.

In this work, we consider as a reference model for the non-linear shear stress-strain rate relation the Carreau--Yasuda model introduced in \cite{Yasuda.Armstrong.ea:81}, i.e.
\begin{equation}\label{eq:Carreau}
\b{\sigma}(\b{x},\GRADs(\b{v})) = 
\mu(\b x) (\delta^\alpha + |\GRADs(\b{v})|^\alpha)^{\frac{r-2}\alpha} \GRADs(\b{v}),
\end{equation}
where $\mu:\Omega\to[\mu_{-}, \mu_{+}]$, with $0<\mu_{-}<\mu_{+}<+\infty$, $\alpha\in [1,\infty)$, and $\delta\ge 0$. 
The Carreau--Yasuda law is a generalization of the Carreau model which corresponds to the case $\alpha=2$.
The case $\delta = 0$ corresponds to the classical power-law model. Most real fluids that can be described by a constitutive relation of type \eqref{eq:Carreau} exhibits shear thinning behavior corresponding to the case $r<2$. For $r=2$, problem \eqref{eq:stokes.continuous} reduces to the standard Stokes system for Newtonian fluids. For the sake of conciseness, in what follows, we only consider the pseudoplastic case $r\le 2$ that is the most common in practical applications and the one presenting more challenges in terms of numerical analysis. Nevertheless, the following arguments can be easily adapted to establish the results for the dilatant case $r>2$.

According to \cite[Appendix A]{Botti.Castanon-Quiroz.ea:21}, the stress-strain law \eqref{eq:Carreau} satisfy the following assumption:
\begin{assumption}\label{ass:hypo}
The shear stress-strain rate law $\b\sigma:\Omega\times\mathbb{R}^{d\times d}_{\rm s} \to \mathbb{R}^{d\times d}_{\rm s}$ appearing in \eqref{eq:stokes.continuous} is a Caratheodory function satisfying $\b{\sigma}(\cdot,\b{0})=\b{0}$ and for a fixed $r\in(1,2]$ there exist real numbers $\delta \in [0,+\infty)$ and $\sigma_{\rm{c}}, \sigma_{\rm{m}}\in(0,+\infty)$ such that the following conditions hold:
\begin{subequations}\label{eq:hypo}
  \begin{alignat}{2} 
  \label{eq:hypo.continuity}
  &|\b{\sigma}(\b{x},\b{\tau})-\b{\sigma}(\b{x},\b{\eta})| \le \sigma_{\rm c} \left(\delta^r + |\b\tau|^r + |\b\eta|^r \right)^\frac{r-2}{r}| \b\tau-\b\eta|,
  &\qquad&\text{(H\"older continuity)}
  \\
  \label{eq:hypo.monotonicity}
  &\left(\b{\sigma}(\b x,\b\tau)-\b{\sigma}(\b x,\b\eta)\right):\left(\b\tau-\b\eta\right) \ge \sigma_{\rm m}\left(\delta^r+|\b\tau|^r + |\b\eta|^r \right)^\frac{r-2}{r}|\b\tau-\b\eta|^{2},
  &\qquad& \text{(strong monotonicity)}
  \end{alignat}
\end{subequations}
for almost every $\b{x}\in\Omega$ and all $\b{\tau},\b{\eta}\in\mathbb{R}^{d\times d}_{\rm{s}}$.
\end{assumption}
We observe that the positive constants $\sigma_{\rm{c}}, \sigma_{\rm{m}}$ in \eqref{eq:hypo} for the Carreau-Yasuda model \eqref{eq:Carreau} depends on $\mu,\alpha$ and $r$ and are such that 
$$
\mu_{-}(r-1) 2^{\frac{(r-1)(r-2)}{r}} \le  \sigma_{\rm{m}} \le \sigma_{\rm{c}} \le \frac{\mu_{+}}{r-1} 
2^{\frac{2r(2-r)+1}{r}}.
$$
Finally, for further use, we recall the strong monotonicity bound
   \begin{equation}\label{eq:bound_hyp_model}
       \vert \b x- \b y\vert^2 (\delta^r+ \vert x\vert^r+\vert y \vert^r)^{\frac{r-2}{r}} \lesssim
       \left\{ 
       (\delta^r + \vert \b x\vert^r)^{\frac{r-2}{r}}\b x - 
       (\delta^r + \vert \b y\vert^r)^{\frac{r-2}{r}}\b y
       \right\} \cdot (\b x -\b y)
   \end{equation}
   for $\b x, \b y\in \mathbb{R}^n$ and $\delta\geq 0$.
Here and in the following, to avoid the proliferation of constants, we adopt the notation $\mathsf{a} \lesssim  \mathsf{b}$ to denote the inequality $\mathsf{a} \leq C \mathsf{b}$, for a positive constant $C$ that might depend on $\sigma_{\rm{c}}, \sigma_{\rm{m}}$ (or related parameters) in Assumption~\ref{ass:hypo} and on $r$, but is independent of the discretization parameter $h$.
The obvious extensions $\mathsf{a} \gtrsim  \mathsf{b}$ and $\mathsf{a} \simeq  \mathsf{b}$ hold.

\subsection{Weak formulation}

Before deriving the variational formulation of problem \eqref{eq:stokes.continuous} and discussing its well-posedness, we introduce additional notation and recall some basic results concerning Sobolev spaces. First of all, the expressions of the conjugate index and the Sobolev index are given by 
$$
r':= \begin{cases} \frac{r}{r-1} &\text{if } r>1, \\ +\infty  &\text{if } r= 1, \end{cases} \qquad 
r^* := \begin{cases} \frac{dr}{d-r} &\text{if } r<d, \\ +\infty  &\text{if } r\ge d, \end{cases} 
$$
respectively.
From the classical Sobolev embedding theorems \cite[Section 9.3]{Brezis:10} it is inferred that $W^{1,r}(\Omega)\subset L^q(\Omega)$ for all $q\in[1,r^*]$ (excluding $\{+\infty\}$ if $r=d$) and the embedding is compact for $q<r^*$. 
We also recall Korn's first inequality (see, e.g., \cite[Theorem 1.2]{Ciarlet.Ciarlet:05} and \cite[Theorem 1]{Geymonat.Suquet:86}) that will be needed in the analysis: there is $C_{\rm K} > 0$ depending only on $\Omega$ and $r$ such that for all $\b v \in \b{W}^{1,r}_{0,\Gamma_D}(\Omega)$,
\begin{equation}\label{eq:Korn}
\|\b v\|_{\b{W}^{1,r}(\Omega)} \le C_{\rm K} \|\GRADs (\b v)\|_{\mathbb{L}^r(\Omega)}.
\end{equation}

From this point on, we omit both the integration variable and the measure from integrals, as they can be in all cases inferred from the context. Let $r\in(1,2]$ be the Sobolev exponent dictated by the non-linear stress-strain law characterizing problem \eqref{eq:stokes.continuous} and satisfying Assumption~\ref{ass:hypo}.
We define the following velocity and pressure spaces incorporating the homogeneous boundary condition on $\Gamma_D$ and the zero-average constraint in the case $\Gamma_D:=\partial\Omega$, respectively: 
\[
\b U := \b{W}^{1,r}_{0,\Gamma_D}(\Omega) 
\qquad
P = \begin{cases} L^{r'}(\Omega)&\text{if } |\Gamma_D|<|\partial\Omega| \\ L^{r'}_0(\Omega) \coloneqq \left\{q \in L^{r'}(\Omega)\ : \ \textstyle\int_\Omega q = 0 \right\} &\text{if } \Gamma_D =\partial\Omega. \end{cases} 
\]
Assuming $\b f \in \b{L}^{r'}(\Omega)$ and $\b g \in \b{L}^{r'}(\Gamma_N)$, the weak formulation of problem \eqref{eq:stokes.continuous} reads:
Find $(\b u,p) \in \b U \times P$ such that
\begin{equation}\label{eq:stokes.weak}
  \begin{aligned}
     a(\b u,\b v)+b(\b v,p) &= \int_\Omega \b f \cdot \b v + \int_{\Gamma_N} \b g \cdot \b\gamma(\b v)&\qquad \forall \b v \in \b U, \\
     -b(\b u,q) &= 0 &\qquad \forall q \in P,   
  \end{aligned}
\end{equation}
where the function $a : \b U \times \b U \to \mathbb{R}$ and the bilinear form $b : \b U \times P \to \mathbb{R}$ are defined  for all $\b v,\b w \in \b U$ and all $q \in L^{r'}(\Omega)$ by
\begin{equation}\label{eq:a.b}
  a(\b w,\b v) \coloneqq \int_\Omega \b\sigma(\cdot,\b\epsilon(\b w)) : \b\epsilon(\b v),\qquad
  b(\b v,q)  \coloneqq -\int_\Omega (\divs \b v) q.
\end{equation}

Let us introduce the kernel of the  bilinear form $b(\cdot,\cdot)$
that corresponds to the functions in $\b U$ with vanishing divergence, i.e.
\begin{equation*}
\b Z := \{ \b v \in \b U \quad \text{s.t.} \quad \divs  \b v = 0  \}\,.
\end{equation*}
Then, Problem~\eqref{eq:stokes.weak} can be formulated in the equivalent kernel form:
Find $\b u \in \b Z$ such that
\begin{equation}\label{eq:stokes.weak.Z}
     a(\b u, \b v) = \int_\Omega \b f \cdot \b v + \int_{\Gamma_N} \b g \cdot \b\gamma(\b v) \qquad \forall \b v \in \b Z \,.
\end{equation}

\subsection{Well-posedness}

In this section, after reporting the properties of the viscous function $a(\cdot,\cdot)$ and the coupling bilinear form $b(\cdot,\cdot)$ defined in \eqref{eq:a.b}, we prove the well-posedness of problem \eqref{eq:stokes.weak}.  
\begin{lemma}[Continuity and strong monotonicity of $a$]
For all $\b u, \b v, \b w \in \b U$, setting $\b e := \b u - \b v$, it holds
\begin{subequations}
  \begin{alignat}{2} 
  \label{eq:a.continuity}
  |a(\b u, \b v) - a(\b w, \b v)| &\le \sigma_{\rm c}
  \norm[\b{W}^{1,r}(\Omega)]{\b e}^{r-1} \norm[\b{W}^{1,r}(\Omega)]{\b v}
  \\
  \label{eq:a.monotonicity}
  a(\b u, \b e) - a(\b w, \b e) &\gtrsim \sigma_{\rm m} \left(\delta^r +\norm[\b{W}^{1,r}(\Omega)]{\b u}^r + \norm[\b{W}^{1,r}(\Omega)]{\b w}^r\right)^{\frac{r-2}r} \norm[\b{W}^{1,r}(\Omega)]{\b e}^2.
  \end{alignat}
\end{subequations}
\end{lemma}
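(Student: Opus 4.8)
The plan is to derive both bounds by starting from the integral representation \eqref{eq:a.b} of $a$, inserting the pointwise structural inequalities of Assumption~\ref{ass:hypo}, and then converting the resulting weighted integrals into Sobolev norms through Hölder's inequality and Korn's inequality \eqref{eq:Korn}. Throughout, set $\b e := \b u - \b w$, so that $\b\epsilon(\b e) = \b\epsilon(\b u) - \b\epsilon(\b w)$.

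\textbf{Continuity.} First I would write $a(\b u,\b v) - a(\b w,\b v) = \int_\Omega \bigl(\b\sigma(\cdot,\b\epsilon(\b u)) - \b\sigma(\cdot,\b\epsilon(\b w))\bigr):\b\epsilon(\b v)$ and bound the integrand pointwise by the Hölder-continuity hypothesis \eqref{eq:hypo.continuity}, obtaining $\sigma_{\rm c}\,(\delta^r + |\b\epsilon(\b u)|^r + |\b\epsilon(\b w)|^r)^{\frac{r-2}{r}}\,|\b\epsilon(\b e)|\,|\b\epsilon(\b v)|$. Since $r\le 2$ the exponent $\tfrac{r-2}{r}$ is non-positive, so from the convexity bound $|\b\epsilon(\b e)|^r \le 2^{r-1}\bigl(|\b\epsilon(\b u)|^r + |\b\epsilon(\b w)|^r\bigr) \le 2^{r-1}\bigl(\delta^r + |\b\epsilon(\b u)|^r + |\b\epsilon(\b w)|^r\bigr)$ one gets $(\delta^r + |\b\epsilon(\b u)|^r + |\b\epsilon(\b w)|^r)^{\frac{r-2}{r}} \lesssim |\b\epsilon(\b e)|^{r-2}$, and the integrand is then $\lesssim \sigma_{\rm c}\,|\b\epsilon(\b e)|^{r-1}\,|\b\epsilon(\b v)|$. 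Applying Hölder's inequality with conjugate exponents $r' = \tfrac{r}{r-1}$ and $r$, together with the identity $(r-1)\,r' = r$, yields $|a(\b u,\b v) - a(\b w,\b v)| \lesssim \sigma_{\rm c}\,\norm[\mathbb{L}^r(\Omega)]{\b\epsilon(\b e)}^{r-1}\,\norm[\mathbb{L}^r(\Omega)]{\b\epsilon(\b v)}$, and bounding each $\mathbb{L}^r$-norm of a symmetric gradient by the full $\b{W}^{1,r}(\Omega)$-norm gives \eqref{eq:a.continuity}.

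\textbf{Monotonicity.} Testing the difference against $\b e$ and invoking the strong-monotonicity hypothesis \eqref{eq:hypo.monotonicity} gives $a(\b u,\b e) - a(\b w,\b e) \ge \sigma_{\rm m}\int_\Omega \omega^{\frac{r-2}{r}}\,|\b\epsilon(\b e)|^2$ with $\omega := \delta^r + |\b\epsilon(\b u)|^r + |\b\epsilon(\b w)|^r$. The crux is to recover a power of $\norm[\mathbb{L}^r(\Omega)]{\b\epsilon(\b e)}$ from this weighted $L^2$ integral even though $\omega$ carries the \emph{negative} exponent $\tfrac{r-2}{r}$. To this end I would factor $|\b\epsilon(\b e)|^r = \bigl(\omega^{\frac{r-2}{r}}|\b\epsilon(\b e)|^2\bigr)^{r/2}\,\omega^{-\frac{r-2}{2}}$ and apply Hölder's inequality with the conjugate pair $\bigl(\tfrac2r,\tfrac{2}{2-r}\bigr)$ — valid since $r<2$, the case $r=2$ being trivial because then $\omega^{\frac{r-2}{r}}\equiv 1$. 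The dual exponent is tuned exactly so that $\omega^{-\frac{r-2}{2}\cdot\frac{2}{2-r}} = \omega$, producing $\norm[\mathbb{L}^r(\Omega)]{\b\epsilon(\b e)}^r \le \bigl(\int_\Omega \omega^{\frac{r-2}{r}}|\b\epsilon(\b e)|^2\bigr)^{r/2}\bigl(\int_\Omega \omega\bigr)^{\frac{2-r}{2}}$. Since $\int_\Omega \omega \lesssim \delta^r + \norm[\b{W}^{1,r}(\Omega)]{\b u}^r + \norm[\b{W}^{1,r}(\Omega)]{\b w}^r =: M$, rearranging and raising to the power $2/r$ gives $\int_\Omega \omega^{\frac{r-2}{r}}|\b\epsilon(\b e)|^2 \gtrsim M^{\frac{r-2}{r}}\,\norm[\mathbb{L}^r(\Omega)]{\b\epsilon(\b e)}^2$, and a final application of Korn's inequality \eqref{eq:Korn} to replace $\norm[\mathbb{L}^r(\Omega)]{\b\epsilon(\b e)}$ by $\norm[\b{W}^{1,r}(\Omega)]{\b e}$ delivers \eqref{eq:a.monotonicity}.

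The main obstacle is the monotonicity estimate: the negative power on $\omega$ forbids a direct pointwise lower bound, and the whole argument hinges on the weighted Hölder step with exponent $2/r$ — a classical device in the analysis of power-law fluids, cf. the quasi-norm technique of \cite{Barrett.Liu:94} — together with the fact that the conjugate weight simplifies to $\omega$ itself. By contrast, the continuity estimate is routine once the singular weight has been absorbed into $|\b\epsilon(\b e)|^{r-2}$, after which a single Hölder inequality closes it.
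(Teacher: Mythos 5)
Your proposal is correct and follows essentially the same route as the paper's proof: for continuity, the same triangle-inequality bound $2^{1-r}|\b\tau-\b\eta|^r\le|\b\tau|^r+|\b\eta|^r$ to absorb the singular weight into $|\b\epsilon(\b e)|^{r-2}$ followed by H\"older with $(r',r)$; for monotonicity, the same weighted H\"older step with conjugate pair $\bigl(\tfrac2r,\tfrac{2}{2-r}\bigr)$ so that the dual weight collapses to $\omega$ itself, combined with Korn's inequality. The only difference is presentational — the paper starts the chain from $\norm[\b W^{1,r}(\Omega)]{\b e}^2$ and bounds it upward, while you start from the monotonicity lower bound and extract the $\mathbb{L}^r$-norm — but the inequalities used are identical.
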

\begin{proof} Let $\b u, \b v, \b w \in \b U$ and set $\b e := \b u - \b w$.

\noindent
\emph{(i) H\"older continuity.}
First, we make a preliminary observation. For every $\b{\tau},\b{\eta}\in\mathbb{R}^{d\times d}_{\rm{s}}$ the triangle inequality implies that
$$
2^{1-r} \vert\b\tau -\b\eta\vert^r \le 
\vert\b\tau\vert^r +\vert\b\eta\vert^r.
$$
Therefore, since $\delta\ge0$ and $r-2<0$ we have
\begin{equation}\label{eq:pre_holdercont}
\left(\delta^r +\vert\b\tau\vert^r +\vert\b\eta\vert^r \right)^{\frac{r-2}{r}}
\le \left(2^{1-r} \vert\b\tau -\b\eta\vert^r\right)^{\frac{r-2}{r}}
\le 2^{\frac{(1-r)(r-2)}{r}} \vert\b\tau -\b\eta\vert^{r-2}
\lesssim \vert\b\tau -\b\eta\vert^{r-2}. 
\end{equation}
Recalling the H\"older continuity property \eqref{eq:hypo.continuity} and using \eqref{eq:pre_holdercont} followed by the H\"older inequality with exponents $(r', r)$, we have (here ${\b e}:= {\b u} - {\b w}$)
\begin{equation}
\label{eq:for-holder}
\begin{aligned}
|a(\b u, \b v) - a(\b w, \b v)| &\le
\int_\Omega \left|[\b\sigma(\cdot,\b\epsilon(\b u)) - \b\sigma(\cdot,\b\epsilon(\b w)) ] : \b\epsilon(\b v)  \right| \\
&\le 
\sigma_{\rm c} \int_\Omega \left(\delta^r + |\b\epsilon(\b u)|^r + |\b\epsilon(\b w)|^r \right)^\frac{r-2}{r} |\b\epsilon(\b e)| |\b\epsilon(\b v)| \\
&\lesssim 
\sigma_{\rm c} \int_\Omega |\b\epsilon(\b e)|^{r-1} |\b\epsilon(\b v)|  
\\
&\lesssim \sigma_{\rm c} \norm[\b{W}^{1,r}(\Omega)]{\b e}^{r-1} \norm[\b{W}^{1,r}(\Omega)]{\b v}.
\end{aligned}
\end{equation}
\emph{(ii) Strong monotonicity.} 
Using the Korn's inequality \eqref{eq:Korn} together with the monotonicity property of $\b \sigma$ in \eqref{eq:hypo.monotonicity} and the H\"older inequality with exponents $(\frac2{2-r},\frac2r)$, we obtain
$$
\begin{aligned}
\sigma_{\rm m} \norm[\b{W}^{1,r}(\Omega)]{\b e}^2 &\le C_{\rm K}^2
\sigma_{\rm m} \left(\int_\Omega \vert \b\epsilon(\b u-\b w) \vert^r\right)^\frac2r \lesssim
\sigma_{\rm m} \left(\int_\Omega \left(\vert\b\epsilon(\b u) -\b\epsilon(\b w)\vert^2\right)^\frac{r}2 \right)^\frac2r
\\
&\lesssim
\left(
\int_\Omega \left(\delta^r+|\b\epsilon(\b u)|^r + |\b\epsilon(\b w)|^r \right)^\frac{2-r}{2}
\bigl((\b\sigma(\cdot,\b\epsilon(\b u)) - \b\sigma(\cdot,\b\epsilon(\b w)) ) : \b\epsilon(\b u - \b w) \bigr)^\frac{r}2 \right)^\frac2r \\
&\lesssim 
\left(\int_\Omega \bigl(\delta^r+|\b\epsilon(\b u)|^r + |\b\epsilon(\b w)|^r \bigr) \right)^\frac{2-r}r
\left(\int_\Omega (\b\sigma(\cdot,\b\epsilon(\b u)) - \b\sigma(\cdot,\b\epsilon(\b w)) ) : \b\epsilon(\b u - \b w) \right) \\
&\lesssim
\left(|\Omega|\delta^r +\norm[\b{W}^{1,r}(\Omega)]{\b u}^r + \norm[\b{W}^{1,r}(\Omega)]{\b w}^r \right)^\frac{2-r}{r} 
(a(\b u, \b e) - a(\b w, \b e)).
\end{aligned}
$$
Rearranging the previous inequality, yields the conclusion.
\end{proof}

The following result is needed to infer the existence of a unique pressure $p\in P$ solving problem \eqref{eq:stokes.weak} from the well-posedness of problem \eqref{eq:stokes.weak.Z}. For its proof we refer to \cite[Theorem 1]{Bogovski:79}.
\begin{lemma}[Inf-sup condition]
For any $r\in(1,\infty)$ there exists a positive constant $\beta(r)$ such that the bilinear form $b(\cdot,\cdot)$ defined in \eqref{eq:a.b} satisfies
\begin{equation}\label{eq:inf_sup}
\inf_{q\in P}\;\sup_{\b w\in \b{U}\setminus\{\b 0\}}\; 
\frac{b(\b w,q)}{\norm[L^{r'}(\Omega)]{q}\norm[\b{W}^{1,r}(\Omega)]{\b w}} \ge \beta(r) > 0.
\end{equation}
\end{lemma}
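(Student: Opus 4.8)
The plan is to reduce \eqref{eq:inf_sup} to the surjectivity, with bounded right inverse, of the divergence operator, and then to invoke the classical Bogovski construction. Let $\mathcal B\colon\b U\to P'$ be the bounded linear operator associated with $b$, that is ${}_{P'}\langle\mathcal B\b v,q\rangle_P := b(\b v,q) = -\int_\Omega(\divs\b v)\,q$. Since $r'\in(1,\infty)$ the space $P$ is reflexive, with $P' = L^r(\Omega)$ if $|\Gamma_D|<|\partial\Omega|$ and $P' = L^{r}_0(\Omega)$ if $\Gamma_D=\partial\Omega$; in either case $\mathcal B\b v = -\divs\b v$. By the closed-range and open-mapping theorems, \eqref{eq:inf_sup} holds for some $\beta(r)>0$ if and only if $\mathcal B$ is onto, equivalently admits a bounded right inverse $\mathcal S\colon P'\to\b U$ (so $\mathcal B\mathcal S = \mathrm{id}$, $\norm[\b{W}^{1,r}(\Omega)]{\mathcal S f}\le\norm{\mathcal S}\norm[L^r(\Omega)]{f}$). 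Indeed, given such an $\mathcal S$, for every $q\in P$ one bounds the dual norm $\norm[\b U']{\mathcal B'q}=\sup_{\b w\in\b U\setminus\{\b 0\}}b(\b w,q)/\norm[\b{W}^{1,r}(\Omega)]{\b w}$ from below by restricting the supremum to $\b w=\mathcal S f$, $f\in P'\setminus\{0\}$; since $b(\mathcal S f,q)={}_{P'}\langle f,q\rangle_P$, taking the supremum over $f$ and using the duality $P'=L^{r}(\Omega)$ (resp.\ $L^{r}_0(\Omega)$) paired with $L^{r'}(\Omega)$ yields $\norm[\b U']{\mathcal B'q}\gtrsim\norm{\mathcal S}^{-1}\norm[L^{r'}(\Omega)]{q}$, i.e.\ \eqref{eq:inf_sup} with $\beta(r)\simeq\norm{\mathcal S}^{-1}$. (When $\Gamma_D=\partial\Omega$ the last duality step costs at most an absolute constant in passing between the $L^{r'}(\Omega)$-norm of the zero-mean $q$ and the quotient norm, which is harmless here.)

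It therefore suffices to construct, for every $f\in P'$, a field $\b v\in\b U$ with $\divs\b v=f$ in $\Omega$ and $\norm[\b{W}^{1,r}(\Omega)]{\b v}\lesssim\norm[L^r(\Omega)]{f}$ (the sign in $b$ being immaterial). If $\Gamma_D=\partial\Omega$, then $P'=L^{r}_0(\Omega)$ and this is exactly Bogovski's theorem \cite[Theorem~1]{Bogovski:79}: as $\Omega$ is bounded and Lipschitz, it is a finite union of domains star-shaped with respect to balls, on each of which the explicit Bogovski integral operator $\b v=\b v_{\mathrm{Bog}}(f)$ solves $\divs\b v=f$ with $\b v\in\b{W}^{1,r}_0$ and $\GRAD\b v$ a Calderón--Zygmund singular integral, hence bounded on $L^r$ for all $r\in(1,\infty)$; a partition-of-unity patching handles general $\Omega$, with a constant depending only on $\Omega$ and $r$. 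If instead $|\Gamma_D|<|\partial\Omega|$ one must reach all of $L^r(\Omega)$ from fields vanishing only on $\Gamma_D$: fix once and for all a field $\b w_*\in\b U$ (e.g.\ smooth and supported near a point of $\Gamma_N$) with $g_*:=\divs\b w_*$ satisfying $\int_\Omega g_*=\int_{\Gamma_N}\b w_*\cdot\b n=1$; then for $f\in L^r(\Omega)$ set $f_0:=f-\big(\int_\Omega f\big)g_*\in L^{r}_0(\Omega)$ and $\b v:=\b v_{\mathrm{Bog}}(f_0)+\big(\int_\Omega f\big)\b w_*$, which gives $\divs\b v=f$ and, using $\big|\int_\Omega f\big|\le|\Omega|^{1/r'}\norm[L^r(\Omega)]{f}$ together with the fixed norms of $g_*$ and $\b w_*$, the desired bound $\norm[\b{W}^{1,r}(\Omega)]{\b v}\lesssim\norm[L^r(\Omega)]{f}$.

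The main obstacle is the $L^r$-boundedness of the Bogovski operator for $r\neq 2$ on a general bounded Lipschitz domain: this is genuine harmonic-analytic content (Calderón--Zygmund theory together with the geometric decomposition of Lipschitz domains into star-shaped pieces), and is precisely why we quote \cite{Bogovski:79} rather than reproduce the argument. Everything else — the functional-analytic equivalence of \eqref{eq:inf_sup} with the existence of a bounded right inverse, and the reduction of the mixed-boundary case to the homogeneous one — is routine.
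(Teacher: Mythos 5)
Your argument is correct and rests on exactly the same ingredient as the paper, which gives no proof beyond the citation of Bogovski's theorem \cite[Theorem 1]{Bogovski:79}; you have simply made explicit the standard functional-analytic reduction (inf-sup $\Leftrightarrow$ surjectivity of the divergence with bounded right inverse) and the correction field needed when $|\Gamma_D|<|\partial\Omega|$. No gaps.
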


We are now ready to establish the well-posedness of problem \eqref{eq:stokes.weak}.
%

\begin{proposition}[Well-posedness]\label{prop:well-posed_cont}
For any $r\in(1,2)$, there exists a unique solution $(\b u, p)\in \b{U}\times P$ to problem problem \eqref{eq:stokes.weak} satisfying the a-priori estimates
\begin{align}\label{eq:a-priori_cont.u}
    \norm[\b{W}^{1,r}(\Omega)]{\b u}&\lesssim
\delta^{2-r} \mathcal{N}(\b f, \b g) + \mathcal{N}(\b f, \b g)^{\frac{1}{r-1}}  \,,
\\
\label{eq:a-priori_cont.p}
\norm[L^{r'}(\Omega)]{p} &\lesssim
\mathcal{N}(\b f, \b g) + \delta^{(2-r)(r-1)} \mathcal{N}(\b f, \b g)^{r-1}  \,,
\end{align}
where 
\begin{equation}
\label{eq:Cfg}
\mathcal{N}(\b f, \b g)\coloneq 
2^{\frac{2-r}r} \sigma_{\rm m}^{-1}
\left(\| \b f \|_{\b L^{r'}(\Omega)} + \| \b g \|_{\b L^{r'}(\Gamma_N)}\right).
\end{equation}
\end{proposition}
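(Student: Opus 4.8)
The plan is to follow the classical route for a saddle‑point problem whose principal block is monotone: first reduce to the divergence‑free problem \eqref{eq:stokes.weak.Z} and solve it by the theory of monotone operators, then recover the pressure from the inf‑sup condition \eqref{eq:inf_sup}, and finally obtain the a‑priori bounds by energy testing. For the velocity, note that $\b U=\b{W}^{1,r}_{0,\Gamma_D}(\Omega)$ is reflexive since $1<r<\infty$, and $\b Z=\{\b v\in\b U:\divs\b v=\b 0\}$, being closed in $\b U$, is a reflexive Banach space for the $\b{W}^{1,r}(\Omega)$-norm. Define $A:\b Z\to\b Z'$ by $\langle A\b w,\b v\rangle:=a(\b w,\b v)$; this is meaningful because $a(\b w,\cdot)$ is linear. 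Then $A$ is bounded, since \eqref{eq:a.continuity} with $\b w\leftarrow\b 0$ and $\b\sigma(\cdot,\b 0)=\b 0$ gives $\|A\b w\|_{\b Z'}\le\sigma_{\rm c}\norm[\b{W}^{1,r}(\Omega)]{\b w}^{r-1}$; it is hemicontinuous, since \eqref{eq:a.continuity} applied along $t\mapsto\b w+t\b v$ makes $t\mapsto\langle A(\b w+t\b v),\b z\rangle$ Hölder continuous; it is strictly monotone by \eqref{eq:a.monotonicity}; and it is coercive, because \eqref{eq:a.monotonicity} with $\b w\leftarrow\b 0$ together with $(\delta^r+\norm[\b{W}^{1,r}(\Omega)]{\b v}^r)^{\frac{r-2}{r}}\gtrsim\norm[\b{W}^{1,r}(\Omega)]{\b v}^{r-2}$ for $\norm[\b{W}^{1,r}(\Omega)]{\b v}\ge1$ yields $a(\b v,\b v)\gtrsim\sigma_{\rm m}\norm[\b{W}^{1,r}(\Omega)]{\b v}^{r}$ for large $\b v$, and $r>1$. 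Since the right‑hand side of \eqref{eq:stokes.weak.Z} is a bounded linear functional on $\b Z$ (Hölder's inequality, the trace theorem, and $\norm[\b L^r(\Omega)]{\b v}\le\norm[\b{W}^{1,r}(\Omega)]{\b v}$), the Browder--Minty theorem provides a unique $\b u\in\b Z$ solving \eqref{eq:stokes.weak.Z}.

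For the pressure, the linear form $\ell(\b v):=\int_\Omega\b f\cdot\b v+\int_{\Gamma_N}\b g\cdot\b\gamma(\b v)-a(\b u,\b v)$ is bounded on $\b U$ and vanishes on $\b Z=\ker b$ by \eqref{eq:stokes.weak.Z}; the inf‑sup condition \eqref{eq:inf_sup} — equivalently, the closed‑range theorem for the operator induced by $b$ — then gives a unique $p\in P$ with $b(\b v,p)=\ell(\b v)$ for all $\b v\in\b U$. Since $\b u\in\b Z$ trivially satisfies $-b(\b u,q)=0$ for all $q\in P$, the pair $(\b u,p)$ solves \eqref{eq:stokes.weak}; uniqueness follows from that of $\b u$ above together with the injectivity of $b(\cdot,\cdot)$ in its second argument encoded in \eqref{eq:inf_sup}.

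For the a‑priori estimates, I would test \eqref{eq:stokes.weak.Z} with $\b v=\b u$, bound the right‑hand side by $(\norm[\b L^{r'}(\Omega)]{\b f}+\norm[\b L^{r'}(\Gamma_N)]{\b g})\norm[\b{W}^{1,r}(\Omega)]{\b u}$ (Hölder, trace, $\norm[\b L^r(\Omega)]{\b u}\le\norm[\b{W}^{1,r}(\Omega)]{\b u}$) and the left‑hand side from below by \eqref{eq:a.monotonicity} with $\b w\leftarrow\b 0$, and divide by $\norm[\b{W}^{1,r}(\Omega)]{\b u}$ to obtain
\[
\norm[\b{W}^{1,r}(\Omega)]{\b u}\lesssim\bigl(\delta^r+\norm[\b{W}^{1,r}(\Omega)]{\b u}^r\bigr)^{\frac{2-r}{r}}\mathcal{N}(\b f,\b g)\le\bigl(\delta^{2-r}+\norm[\b{W}^{1,r}(\Omega)]{\b u}^{2-r}\bigr)\mathcal{N}(\b f,\b g),
\]
the last step using sub‑additivity of $t\mapsto t^{(2-r)/r}$ (valid since $r\ge1$). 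As $1<r<2$, a weighted Young inequality with conjugate exponents $\bigl(\tfrac1{2-r},\tfrac1{r-1}\bigr)$ absorbs $\norm[\b{W}^{1,r}(\Omega)]{\b u}^{2-r}\mathcal{N}(\b f,\b g)$ into the left‑hand side and yields \eqref{eq:a-priori_cont.u}. For the pressure, \eqref{eq:inf_sup} gives $\beta(r)\norm[L^{r'}(\Omega)]{p}\le\sup_{\b w\ne\b0}\norm[\b{W}^{1,r}(\Omega)]{\b w}^{-1}\bigl(|\int_\Omega\b f\cdot\b w|+|\int_{\Gamma_N}\b g\cdot\b\gamma(\b w)|+|a(\b u,\b w)|\bigr)$; using $|a(\b u,\b w)|\le\sigma_{\rm c}\norm[\b{W}^{1,r}(\Omega)]{\b u}^{r-1}\norm[\b{W}^{1,r}(\Omega)]{\b w}$ from \eqref{eq:a.continuity} with $\b w\leftarrow\b 0$, one gets $\norm[L^{r'}(\Omega)]{p}\lesssim\mathcal{N}(\b f,\b g)+\norm[\b{W}^{1,r}(\Omega)]{\b u}^{r-1}$, and inserting \eqref{eq:a-priori_cont.u} together with $(a+b)^{r-1}\lesssim a^{r-1}+b^{r-1}$ produces \eqref{eq:a-priori_cont.p}.

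The main obstacle is the coercivity step and the subsequent resolution of the implicit bound for $\norm[\b{W}^{1,r}(\Omega)]{\b u}$: unlike the Newtonian case $r=2$, the monotonicity constant in \eqref{eq:a.monotonicity} degenerates like $\norm[\b{W}^{1,r}(\Omega)]{\b u}^{r-2}\to0$ as $\norm[\b{W}^{1,r}(\Omega)]{\b u}\to\infty$, so coercivity holds only in the weak form $a(\b v,\b v)\gtrsim\norm[\b{W}^{1,r}(\Omega)]{\b v}^{r}$; it is precisely the restriction $1<r<2$ that makes both conjugate exponents admissible in the Young inequality and that yields the explicit dependence of the bounds on $\delta$ and on $\mathcal{N}(\b f,\b g)$.
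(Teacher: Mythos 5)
Your proposal is correct, and for the a-priori bounds it follows essentially the same route as the paper: test \eqref{eq:stokes.weak.Z} with $\b v=\b u$, bound the left-hand side from below via the strong monotonicity \eqref{eq:a.monotonicity} with $\b w=\b 0$, and then recover the pressure from the inf-sup condition \eqref{eq:inf_sup} together with the H\"older continuity \eqref{eq:a.continuity}. There are two genuine differences worth noting. First, the paper does not prove existence and uniqueness at all --- it simply cites the literature --- whereas you supply a complete Browder--Minty argument on the reflexive kernel space $\b Z$ (boundedness, hemicontinuity, strict monotonicity, coercivity) followed by the closed-range recovery of the pressure; this makes your write-up self-contained at the cost of some length, and all the hypotheses you check do hold (in particular coercivity $a(\b v,\b v)\gtrsim\norm[\b{W}^{1,r}(\Omega)]{\b v}^{r}$ for large arguments suffices since $r>1$). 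Second, to resolve the implicit inequality $\norm[\b{W}^{1,r}(\Omega)]{\b u}\lesssim(\delta^r+\norm[\b{W}^{1,r}(\Omega)]{\b u}^r)^{(2-r)/r}\mathcal{N}(\b f,\b g)$ the paper splits into the cases $\norm[\b{W}^{1,r}(\Omega)]{\b u}\ge\delta$ and $\norm[\b{W}^{1,r}(\Omega)]{\b u}<\delta$, while you use sub-additivity of $t\mapsto t^{(2-r)/r}$ followed by a weighted Young inequality with the conjugate pair $(\tfrac1{2-r},\tfrac1{r-1})$; both yield exactly \eqref{eq:a-priori_cont.u}, and your absorption argument is arguably cleaner since it avoids the case distinction. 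The pressure estimate is identical in both treatments.
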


\begin{proof}
We focus on the proof of the a-priori bounds \eqref{eq:a-priori_cont.u} and \eqref{eq:a-priori_cont.p}. For the uniqueness and existence we refer to \cite{Beirao-da-Veiga:09, Berselli.Diening.ea:10}. Using \eqref{eq:a.monotonicity} with $\b w = \b 0$ and taking $\b v = \b u$ in \eqref{eq:stokes.weak.Z}, it is inferred that
$$
\sigma_{\rm m} \left(\delta^r +\norm[\b{W}^{1,r}(\Omega)]{\b u}^r \right)^{\frac{r-2}r} \norm[\b{W}^{1,r}(\Omega)]{\b u}^2 \lesssim 
a(\b u, \b u) = \int_\Omega \b f \cdot \b u + \int_{\Gamma_N} \b g \cdot \b\gamma(\b u).
$$
Thus owing to the H\"older inequality and the continuity of the trace map, we obtain
\begin{equation}
    \label{eq:well0}
\sigma_{\rm m} \left(\delta^r +\norm[\b{W}^{1,r}(\Omega)]{\b u}^r \right)^{\frac{r-2}r} \norm[\b{W}^{1,r}(\Omega)]{\b u}^{\bcancel{2}} \lesssim 
\left(\| \b f \|_{\b L^{r'}(\Omega)} +
\| \b g \|_{\b L^{r'}(\Gamma_N)}\right)
\bcancel{\norm[\b{W}^{1,r}(\Omega)]{\b u}}.
\end{equation}
If $\norm[\b{W}^{1,r}(\Omega)]{\b u}\ge \delta$, from the previous bound it follows that
\begin{equation}\label{eq:a-priori.1}
\norm[\b{W}^{1,r}(\Omega)]{\b u}^{r-1} \lesssim
2^{\frac{2-r}r} \sigma_{\rm m}^{-1}
\left(\| \b f \|_{L^{r'}(\Omega,\R^d)} +
\| \b g \|_{L^{r'}(\Gamma_N,\R^d)}\right)
\coloneq \mathcal{N}(\b f, \b g).
\end{equation}
Otherwise, owing to $\norm[\b{W}^{1,r}(\Omega)]{\b u}^r\ge \delta^r$ and \eqref{eq:well0} we obtain
\begin{equation}\label{eq:a-priori.2}
\norm[\b{W}^{1,r}(\Omega)]{\b u} \lesssim 
\delta^{2-r}\mathcal{N}(\b f, \b g).
\end{equation}
Therefore, from \eqref{eq:a-priori.1} and \eqref{eq:a-priori.2} we infer that
$$
\norm[\b{W}^{1,r}(\Omega)]{\b u} \lesssim 
\delta^{2-r} \mathcal{N}(\b f, \b g) + \mathcal{N}(\b f, \b g)^{\frac{1}{r-1}}.
$$

We now move to the estimate of the pressure. Owing to the inf-sup condition \eqref{eq:inf_sup} and equation \eqref{eq:stokes.weak}, it holds
$$
\beta(r) \norm[L^{r'}(\Omega)]{p} \le 
\sup_{\b v\in\b U\setminus\{\b 0\}}\frac{b(\b v, p)}{\norm[\b{W}^{1,r}(\Omega)]{\b v}} = 
\sup_{\b v\in\b U\setminus\{\b 0\}}\frac{\int_\Omega \b f \cdot \b v + \int_{\Gamma_N} \b g \cdot \b\gamma(\b v) - a(\b u, \b v)}{\norm[\b{W}^{1,r}(\Omega)]{\b v}}.
$$
Applying the H\"older inequality, the continuity of $a$ in \eqref{eq:a.continuity} with $\b{w}=\b0$, and the a-priori estimate of the velocity \eqref{eq:a-priori_cont.u}, we obtain 
$$
\begin{aligned}
\norm[L^{r'}(\Omega)]{p} &\lesssim
\| \b f \|_{\b L^{r'}(\Omega)} + \| \b g \|_{\b L^{r'}(\Gamma_N)} +
\sigma_{\rm c} \norm[\b{W}^{1,r}(\Omega)]{\b u}^{r-1}  \\ &\lesssim 
\sigma_{\rm c}\mathcal{N}(\b f, \b g) + \sigma_{\rm c} \delta^{(2-r)(r-1)} \mathcal{N}(\b f, \b g)^{r-1}.
\end{aligned}
$$
\end{proof}


\section{Virtual Elements discretization}
\label{sec:VEM}

In the present section, we present the divergence-free Virtual Elements for non-Newtonian fluid flows.
In Subsection \ref{sub:preliminaries}, we introduce 
some basic tools and notations useful in the construction and the theoretical analysis of Virtual Element Methods.
In Subsections \ref{sub:v-p spaces}, we outline an overview of the inf-sup stable divergence--free velocities-pressures pair of spaces.
In Subsection \ref{sub:forms}, we define the discrete computable form, in particular we design a VEM stabilizing form that is suited in the proposed non-linear setting. 
Finally, in Subsection \ref{sub:vem problem}, we introduce the virtual elements discretization of Problems \eqref{eq:stokes.weak} and \eqref{eq:stokes.weak.Z} and we establish the well-posedness of the discrete problem.

\subsection{Preliminaries}
\label{sub:preliminaries}
Let $\{\Omega_h\}_h$ be a sequence of decompositions of the domain $\Omega \subset \R^2$ into general polytopal elements $E$ 
where $h := \sup_{E \in \Omega_h} h_{E}$. 
%
%
We suppose that $\{\Omega_h\}_h$ fulfils the following assumption.
\begin{assumption}\label{ass:mesh}
\textbf{(Mesh assumptions).} There exists a positive constant $\rho$ such that for any $E \in \{\Omega_h\}_h$ 
\begin{itemize}
\item $E$ is star-shaped with respect to a ball $B_E$ of radius $ \geq\, \rho \, h_E$;
\item any edge $e$ of $E$ has length  $ \geq\, \rho \, h_E$.
\end{itemize}
\end{assumption}
We remark that the hypotheses above, though not too restrictive in many practical cases, could possibly be further relaxed, combining the present analysis with the studies in~\cite{BLR:2017,brenner-guan-sung:2017,brenner-sung:2018,BdV-Vacca:2022}.
Using standard VEM notations, for $n \in \N$ and  $m \in \R_+$ and $p=1, \dots, +\infty$
let us introduce the spaces:
\begin{itemize}
\item $\Pk_n(\omega)$: the set of polynomials on $\omega \subset \Omega$ of degree $\leq n$  (with $\Pk_{-1}(\omega)=\{ 0 \}$),
\item $\Pk_n(\Omega_h) := \{q \in L^2(\Omega) \quad \text{s.t} \quad q|_{E} \in  \Pk_n(E) \quad \text{for all $E \in \Omega_h$}\}$,
\item $W^{m,p}(\Omega_h) := \{v \in L^p(\Omega) \quad \text{s.t} \quad v|_{E} \in  W^{m,p}(E) \quad \text{for all $E \in \Omega_h$}\}$,
\end{itemize}
equipped with the broken norm and seminorm
\begin{equation}
\label{eq:normebroken}
\begin{aligned}
\norm[W^{m,p}(\Omega_h)]{v}^p
&:= \sum_{E \in \Omega_h} \norm[W^{m,p}(E)]{v}^p\,,
&\quad
|v|^p_{W^{m,p}(\Omega_h)} &:= \sum_{E \in \Omega_h} |v|^p_{W^{m,p}(E)}\,, 
&\qquad &\text{if $1 \leq p < \infty$.}
\end{aligned}
\end{equation}
Let $E \in \Omega_h$, we denote with $h_{E}$ the diameter, with $|E|$ the area, with $\b x_{E} := (x_{E, 1}, x_{E, 2})$ the centroid. 
A natural basis associated with the space $\Pk_n(E)$ is the set of normalized
monomials
\begin{equation*}
\M_n(E) := \left\{ 
m_{\boldsymbol{\alpha}},
\,\,\,  \text{with} \,\,\,
|\boldsymbol{\alpha}| \leq n
\right\}
\end{equation*}
where, for any multi-index $\boldsymbol{\alpha} := (\alpha_1, \alpha_2) \in \N^2$
\[
m_{\boldsymbol{\alpha}} :=
\prod_{i=1}^2  
\left(\frac{x_i - x_{E, i}}{h_{E}} \right)^{\alpha_i}
\qquad \text{and} \qquad
|\boldsymbol{\alpha}|:= \sum_{i=1}^2 \alpha_i \,.
\]
For any $e$ edge of $\Omega_h$, the normalized monomial set $\M_n(e)$ is defined analogously as the span of all one-dimensional normalized monomials of degree up to $n$.
Moreover for any $m \leq n$ we denote with
\[
\Pkw_{n \setminus m}(E) := {\rm span}  
\left\{ 
m_{\boldsymbol{\alpha}},
\,\,\,  \text{with} \,\,\,
m +1 \leq |\boldsymbol{\alpha}| \leq n
\right\} \,.
\]


\noindent
For any $E$,  
let us define the following polynomial projections:
\begin{itemize}
\item the $L^2$-projection $\Pi_n^{0, E} \colon L^2(E) \to \Pk_n(E)$, given by
\begin{equation}
\label{eq:P0_k^E}
\int_{E} q_n (v - \, {\Pi}_{n}^{0, E}  v) \, {\rm d} E := 0 \qquad  \text{for all $v \in L^2(E)$  and $q_n \in \Pk_n(E)$,} 
\end{equation} 
with obvious extension for vector functions $\Pi^{0, E}_{n} \colon \b L^2(E) \to [\Pk_n(E)]^d$ and 
tensor functions $\boldsymbol{\Pi}^{0, E}_{n} \colon \mathbb{L}^2(E) \to [\Pk_n(E)]^{d\times d}$;

\item the $H^1$-seminorm projection ${\Pi}_{n}^{\nabla,E} \colon H^1(E) \to \Pk_n(E)$, defined by 
\begin{equation*}
\left\{
\begin{aligned}
& \int_{E} \nabla  \,q_n \cdot \nabla ( v - \, {\Pi}_{n}^{\nabla,E}   v)\, {\rm d} E = 0 \quad  \text{for all $v \in H^1(E)$ and  $q_n \in \Pk_n(E)$,} \\
& \int_{\partial E}(v - \,  {\Pi}_{n}^{\nabla, E}  v) \, {\rm d}s:= 0 \, ,
\end{aligned}
\right.
\end{equation*}
with extension for vector fields $\Pi^{\nabla, E}_{n} \colon \b H^1(E) \to [\Pk_n(E)]^d$.
\end{itemize}
Further we define the operator $\Pi^0_{n}\colon L^2(\Omega) \to \Pk_n(\Omega_h)$ such that $\Pi^0_{n}\vert_E:=\Pi^{0,E}_n$ for any $E \in \Omega_h$.
We finally recall the following well know useful results:
\begin{itemize}
\item Trace inequality with scaling \cite{brenner-scott:book}:
For any $E \in \Omega_h$ and for any  function $v \in W^{1,r}(E)$ it holds 
\begin{equation}
\label{eq:cont_trace}
\|v\|^r_{L^r(\partial E)} \lesssim h_E^{-1}\|v\|^r_{L^r(E)} + h_E^{r-1} \|\nabla v\|^r_{\b{L}^r(E)} \,.
\end{equation}

\item Polynomial inverse estimate \cite[Theorem 4.5.11]{brenner-scott:book}: 
Let $1 \leq q,p \leq \infty$, then for any $E \in \Omega_h$
\begin{equation}
\label{eq:inverse}
\Vert  p_n \Vert_{L^q(E)} \lesssim h_E^{2/q - 2/p} \Vert  p_n \Vert_{L^p(E)}
\quad \text{for any $p_n \in \Pk_n(E)$.}
\end{equation}

\end{itemize}

\subsection{Virtual Elements velocity and pressure spaces}
\label{sub:v-p spaces}

Let $k \geq 2$ be the  polynomial order of the method.
We consider on each polygonal element $E \in \Omega_h$ the ``enhanced'' virtual space \cite{BLV:2017,vacca:2018}:
\begin{equation}
\label{eq:v-loc}
\begin{aligned}
\VDL := \biggl\{  
\b v_h \in [C^0(\overline{E})]^2 \,\,\, \text{s.t.} \,\,\,
(i)& \,\,  
  \boldsymbol{\Delta}    \b v_h  +  \nabla s \in \b x ^\perp \Pk_{k-1}(E), 
\,\,\text{ for some $s \in L_0^2(E)$,} 
\\
(ii)&  
\,\,   \divs \, \b v_h \in \Pk_{k-1}(E) \,, 
\\
(iii) &
\,\,  {\b v_h}_{|e} \in [\Pk_k(e)]^2 \,\,\, \forall e \in \partial E, 
\\
(iv) &
\,\,   (\b v_h - \Pi^{\nabla,E}_k \b v_h, \, \b x ^\perp \, \widehat{p}_{k-1} )_E = 0
\,\,\, \text{$\forall \widehat{p}_{k-1} \in \widehat{\Pk}_{k-1 \setminus k-3}(E)$}
\,\,\biggr\} \,,
\end{aligned}
\end{equation}
where $\b x ^\perp := (x_2, -x_1)$.
Next, we summarize the main properties of the space $\VDL$
(we refer to \cite{BLV:2018} for a deeper analysis).

\begin{itemize}
\item [\textbf{(P1)}] \textbf{Polynomial inclusion:} $[\Pk_k(E)]^2 \subseteq \VDL$;
\item [\textbf{(P2)}] \textbf{Degrees of freedom:}
the following linear operators $\mathbf{D_{\boldsymbol{U}}}$ constitute a set of DoFs for $\VDL$:
\begin{itemize}
\item[$\mathbf{D_{\boldsymbol{U}}1}$] the values of $\b v_h$ at the vertexes of the polygon $E$,
\item[$\mathbf{D_{\boldsymbol{U}}2}$] the values of $\b v_h$ at $k-1$ distinct points of every edge $e \in \partial E$,
\item[$\mathbf{D_{\boldsymbol{U}}3}$] the moments of $\b v_h$ 
$$
\frac{1}{|E|}\int_E  \b v_h \cdot \boldsymbol{m}^{\perp} m_{\boldsymbol{\alpha}} \, {\rm d}E  
\qquad 
\text{for any $m_{\boldsymbol{\alpha}} \in \M_{k-3}(E)$,}
$$
where $\boldsymbol{m}^{\perp} := \frac{1}{h_E}(x_2 - x_{2,E}, -x_1 + x_{1,E})$,
\item[$\mathbf{D_{\boldsymbol{U}}4}$] the moments of $\divs \b v_h$ 
$$
\frac{h_E}{|E|}\int_E (\divs \b v_h) \, m_{\boldsymbol{\alpha}} \, {\rm d}E  
\qquad \text{for any $m_{\boldsymbol{\alpha}} \in \M_{k-1}(E)$ with $|\boldsymbol{\alpha}| > 0$;}
$$
\end{itemize}
\item [\textbf{(P3)}] \textbf{Polynomial projections:}
the DoFs $\mathbf{D_{\boldsymbol U}}$ allow us to compute the following linear operators:
\[
\P0 \colon \VDL \to [\Pk_{k}(E)]^2, \qquad
\PP0 \colon \GRAD \VDL \to [\Pk_{k-1}(E)]^{2 \times 2} \,.
\]
\end{itemize} 

The global velocity space $\VDG$ is defined by gluing the local spaces with the obvious
associated sets of global DoFs: 
\begin{equation}
\label{eq:v-glo}
\VDG := \{\b v_h \in \VCG \quad \text{s.t.} \quad {\b v_h}|_E \in \VDL \quad \text{for all $E \in \Omega_h$} \} \,.
\end{equation}
The discrete pressure space $\QDG$ is given by the piecewise polynomial functions of degree $k-1$, i.e.
\begin{equation}
\label{eq:q-glo}
\QDG := \{q_h \in \QCG \quad \text{s.t.} \quad {q_h}_|E \in \Pk_{k-1}(E) \quad \text{for all $E \in \Omega_h$} \}\,.
\end{equation}
The couple of spaces $(\VDG, \, \QDG)$ is well known to be inf-sup stable in the classical Hilbertian setting. The inf-sup stability for $r \neq 2$ is proven below in Section~\ref{sec:infsup}.
Let us introduce the discrete kernel
\begin{equation}
\label{eq:z-glo}
\ZDG := \{ \b v_h \in \VDG \quad \text{s.t.} \quad  b(\b v_h, q_h) = 0 \quad \text{for all $q_h \in \QDG$}\}
\end{equation}
then recalling $(ii)$ in \eqref{eq:v-loc} and \eqref{eq:q-glo}, the following kernel inclusion holds
\begin{equation*}
\ZDG \subseteq \ZCG \,,
\end{equation*}
i.e. the functions in the discrete kernel are exactly divergence-free.

Uniquely for the purpose of defining our interpolant in the space $\VDG$, we consider also the alternative set of edge degrees of freedom (which can substitute $\mathbf{D_{\boldsymbol{U}}2}$)
\begin{itemize}
\item[$\mathbf{D_{\boldsymbol{U}}2'}$] the moments of $\b v_h$
\begin{equation}
\label{eq:per-infsup2}
\frac{1}{|e|} \int_e {\b v_h} \cdot {\boldsymbol{t}}_e m_\alpha \,{\rm d}s \,, \qquad 
\frac{1}{|e|} \int_e {\b v_h} \cdot {\boldsymbol{n}}_e m_\alpha \,{\rm d}s \qquad 
\text{for any $m_\alpha$} \in \M_{k-2}(e) \, ,
\end{equation}
where $ {\boldsymbol{t}}_e$ and  ${\boldsymbol{n}}_e$ denote the tangent and the normal vectors to the edge $e$ respectively.
\end{itemize}
Given any $\b v \in \b W^{s,p}(E)$, with $p \in (1,\infty)$ and $s \in {\mathbb R}_+$, $s>2/p$, we define its approximant 
${\b v_I} \in \VDG$ as the unique function in $\VDG$ that interpolates $\b v$ with respect to the DoF set  
$\mathbf{D_{\boldsymbol{U}}1}$, $\mathbf{D_{\boldsymbol{U}}2}'$, $\mathbf{D_{\boldsymbol{U}}3}$, $\mathbf{D_{\boldsymbol{U}}4}$.
It is easy to check that, whenever $\textrm{div} \b v = 0$, then $\b v_I \in \ZDG$.
Furthermore, the following approximation property is a trivial generalization of the results in \cite{MBM_23}.

\begin{lemma}\label{lem:approx-interp}
Let $E \in \Omega_h$, $n \in \mathbb{N}$, $p \in [1,\infty]$, $s \in {\mathbb R}_{+}$ and $\b v \in \b W^{s,p}(E)$.
For $s>2/p$, let ${\b v_I} \in \VDG$ be the interpolant of $\b v$ defined above. It holds
$$
\begin{aligned}
& | \b v - \Pi^{0, E}_{n} \b v |_{\b W^{m,p}(E)} \lesssim h_E^{s-m} |\b v|_{\b W^{s,p}(E)} 
&\quad &\text{for $0 \le m \le s \le n+1$,} \\
& | \b v - \b v_I |_{\b W^{m,p}(E)} \lesssim h_E^{s-m} |\b v|_{\b W^{s,p}(E)} 
&\quad  &\text{for $2/p < s \le k+1 , \ m \in \{0,1\}$.}
\end{aligned}
$$
The first bound above extends identically to the scalar and tensor-valued case.
\end{lemma}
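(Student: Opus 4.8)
The plan is to prove the two estimates separately, with the first (polynomial $L^2$-projection) being a completely standard Bramble--Hilbert-type argument and the second (interpolation in $\VDG$) requiring slightly more care because of the virtual nature of the space. For the projection estimate, I would proceed as follows: since $\Pi^{0,E}_{n}$ reproduces polynomials of degree $\le n$, for any $q \in \Pk_n(E)$ with $n \ge s-1$ we can write $\b v - \Pi^{0,E}_n \b v = (\b v - q) - \Pi^{0,E}_n(\b v - q)$; then bound the $\b W^{m,p}(E)$-seminorm of each term. The first term is handled by choosing $q$ to be a suitable averaged Taylor polynomial and invoking the Bramble--Hilbert lemma on star-shaped domains (Assumption~\ref{ass:mesh} guarantees the chunkiness parameter is controlled by $\rho$), giving $|\b v - q|_{\b W^{m,p}(E)} \lesssim h_E^{s-m}|\b v|_{\b W^{s,p}(E)}$. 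For the second term, $L^2$-stability of $\Pi^{0,E}_n$ in $L^p$ together with the polynomial inverse estimate \eqref{eq:inverse} transfers the $\b W^{m,p}$-seminorm bound, with the scaling in $h_E$ being tracked by a standard change of variables to a reference configuration or by a direct scaling argument; the vector- and tensor-valued cases follow componentwise.

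For the interpolation estimate, the plan is to use the standard VEM strategy of comparing $\b v_I$ with a polynomial. Fix $q \in [\Pk_k(E)]^2$ (e.g.\ the averaged Taylor polynomial of $\b v$); by the polynomial inclusion \textbf{(P1)} we have $q \in \VDL$, hence $q_I = q$, so $\b v - \b v_I = (\b v - q) - (\b v - q)_I$. The term $\b v - q$ is controlled by Bramble--Hilbert exactly as above. For $(\b v-q)_I$, the key is a \emph{local stability estimate} for the interpolation operator: one shows that for $\b w \in \VDL$ (or more precisely, applied to $\b w = (\b v - q)$ viewed through its DoF values), $\|\b w_I\|_{\b W^{m,p}(E)} \lesssim h_E^{-m}\big(\|\b w\|_{\b L^p(E)} + h_E|\b w|_{\b W^{1,p}(E)} + \text{(trace terms)}\big)$, obtained by expressing $\b w_I$ in terms of a basis dual to the DoF set $\mathbf{D_{\boldsymbol U}}1$, $\mathbf{D_{\boldsymbol U}}2'$, $\mathbf{D_{\boldsymbol U}}3$, $\mathbf{D_{\boldsymbol U}}4$, estimating each DoF functional (vertex values and edge moments via the scaled trace inequality \eqref{eq:cont_trace} and a one-dimensional inverse/embedding estimate on edges, interior moments of $\b w$ and of $\divs \b w$ via Hölder and \eqref{eq:inverse}), and using an inverse inequality to pass from $\b L^p$ to $\b W^{m,p}$ on the finite-dimensional space. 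Combining this stability bound applied to $\b v - q$ with the Bramble--Hilbert estimates for $\|\b v - q\|$ on $E$, its edges, and its gradient yields $|(\b v - q)_I|_{\b W^{m,p}(E)} \lesssim h_E^{s-m}|\b v|_{\b W^{s,p}(E)}$, and the triangle inequality concludes.

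The main obstacle I anticipate is the local stability of the interpolant in the non-Hilbertian $\b W^{m,p}$ setting. In the Hilbertian case this is classical, but here one must be careful that the DoF functionals — especially the edge moments in $\mathbf{D_{\boldsymbol U}}2'$ and the vertex evaluations in $\mathbf{D_{\boldsymbol U}}1$, which require a trace/point-value estimate valid only for $s > 2/p$ — are bounded with the correct $h_E$-scaling and with constants depending only on $\rho$ and $p$, not on the (possibly many) edges of $E$. This is exactly where the hypothesis $s > 2/p$ enters, ensuring the Sobolev embedding $\b W^{s,p}(E) \hookrightarrow C^0(\overline E)$ and the well-posedness of the edge and vertex DoFs. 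Since the statement asserts this is ``a trivial generalization of the results in \cite{MBM_23}'', I expect the authors' proof to simply cite that reference after noting that the mesh assumptions and the Sobolev embedding make the standard arguments go through verbatim with $L^2$ replaced by $L^p$; I would do the same, sketching the stability estimate only to the extent of indicating which known tools (\eqref{eq:cont_trace}, \eqref{eq:inverse}, Bramble--Hilbert, and the one-dimensional trace/inverse estimates on edges) are invoked.
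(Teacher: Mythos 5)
Your proposal is correct and is essentially the argument the paper relies on: the paper gives no proof of this lemma, stating only that it is ``a trivial generalization of the results in \cite{MBM_23}'', and your sketch (Bramble--Hilbert for the projection; comparison with a polynomial plus DoF-wise stability of the interpolation operator, with $s>2/p$ guaranteeing well-defined vertex and edge DoFs) is precisely the standard route taken in that reference, adapted from $L^2$ to $L^p$. No discrepancy to report.
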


\subsection{Virtual Element form: stabilization and inf-sup condition}
\label{sub:forms}

The next step in the construction of the method is the definition of a discrete version of the form $a(\cdot, \cdot)$ in \eqref{eq:a.b} and the approximation of the right-hand side of \eqref{eq:stokes.weak}.
In the present analysis, the main issue is the design of a VEM stabilizing form that is suited for the non-linearity under consideration.
Following the usual procedure in the VEM setting, we need to construct discrete forms that are computable employing the DoF values only.
In the light of property \textbf{(P3)} we define the computable discrete form

\[
a_h^E(\b v_h,  \b w_h) := 
\int_E \b\sigma(\cdot, \PP0 \b \epsilon(\b v_h)) : \PP0 \b\epsilon(\b w_h)
+ S^E((I - \P0 ) \b v_h, \, (I - \P0 ) \b w_h) \,,
\]
where $S^E(\cdot, \cdot) \colon \VDL \times \VDL \to \R$
is the VEM stabilizing term.
Many examples for the linear case can be found in the VEM literature \cite{volley,BDR:2017,BdV-Vacca:2022}.
In the present paper, we consider in the non-linear setting the so-called \texttt{dofi-dofi} stabilization defined as follows.
Let $N_E$ be the dimension of $\VDL$ and let, for $i=1, \dots, N_E$, $\chi_i \colon \VDL \to \R$ be the function that associates to each $\b v_h \in \VDL$ the value of the $i$-th local degree of freedom in {\bf (P2)}, and let $\DOF \in {\mathbb R}^{N_E}$ be the corresponding vector. 
Then, we propose the following two choices for $S^E$ (coherently with the choice in \eqref{eq:Carreau})
\\
\begin{equation}
\label{eq:dofi}
\begin{aligned}
S_1^E(\b v_h,\w_h) & := \overline{\mu}_E \, \big( \delta^\alpha + h_E^{-\alpha}\vert \DOF(\b v_h) \vert^\alpha \big)^{\frac{r-2}\alpha}  \DOF (\b v_h) \cdot \DOF (\w_h) \, , \\
S_2^E(\b v_h,\w_h) & :=  \overline{\mu}_E \, \sum_{i=1}^{\NE} \big( \delta^\alpha + h_E^{-\alpha}|\DOFi (\b v_h)|^\alpha \big)^{\frac{r-2}\alpha} \DOFi (\b v_h) \DOFi (\w_h) \, ,
\end{aligned}
\end{equation}
where $\overline{\mu}_E = \Pi^{0,E}_0 \mu$.
%
In Section~\ref{sec:stab-form} we will show that the above choice indeed satisfies a suitable stability property.
Since the two forms above are equivalent, see Lemma \ref{Lem:vemstab-prel}, the following results apply to each of them. 
Whenever a technical difference arises, for ease of exposition we will restrict the discussion to the $S^E_1$ choice.

Let $S(\cdot,\cdot) \colon \VDG \times \VDG \to \R$ be defined by 
\begin{equation}
\label{eq:Sglobal}    
S(\b v_h, \b w_h):=\sum_{E\in\Omega_h} S^E(\b v_h, \b w_h) \qquad \text{for all $\b v_h$, $\b w_h \in \VDG$.}
\end{equation} 
Then the global forms $a: \VDG \times \VDG \to \R$ is defined by summing the local contributions, i.e.
\begin{equation}
\label{eq:forma ah}
a_h(\b v_h, \b w_h) := 
\int_\Omega \b \sigma(\cdot, \b \Pi^0_{k-1} \b \epsilon(\b v_h)) : 
\b \Pi^0_{k-1} \b \epsilon(\b \w_h) + 
S((I - \Pi^0_k) \b v_h, (I - \Pi^0_k) \b w_h)
\qquad \text{for all $\b v_h$, $\b w_h \in \VDG$.}
\end{equation}
We finally define the discrete external force
\begin{equation}
\label{eq:forma fh}
\b f_h:= \Pi^0_k \b f \,,
\end{equation}
and observe that the ensuing right-hand side $(	\b f_h, \b v_h)$ is computable by property \textbf{(P3)}.

\begin{remark}{(Choiche of $\delta$)}\label{rem:deltazero}
To avoid a more cumbersome analysis, in the following, we tailor our theoretical developments for the more challenging and interesting power-law model, i.e. \eqref{eq:Carreau} with $\delta=0$. Our derivations could be extended to the case $\delta>0$ combining the present results with, e.g.,  the approaches in \cite{BN:1990, BL:1993,Barrett.Liu:94}.
\end{remark}


\subsubsection{Properties of the stabilization form}
\label{sec:stab-form}

The following lemma will be useful in the sequel. 
\begin{lemma}\label{Lem:vemstab-prel}
Let the mesh regularity assumptions stated in Assumption~\ref{ass:mesh} hold. For any $E \in \mesh$ and for all $\b v_h \in \VDL$ we have
$$
S_1^E(\b v_h,\b v_h) \simeq S_2^E(\b v_h,\b v_h) \simeq 
h_E^{2-r} | \DOF(\b v_h) |^r \simeq 
h_E^{2-r} \max_{1 \le i \le \NE} | \DOFi (\b v_h) |^r \,.
$$
\end{lemma}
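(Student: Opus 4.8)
The plan is to use the hypothesis $\delta=0$ (Remark~\ref{rem:deltazero}) to collapse both stabilizations into explicit algebraic expressions in the degree-of-freedom vector, and then to recognize the statement as the equivalence of the $\ell^2$, $\ell^r$ and $\ell^\infty$ (quasi-)norms on $\mathbb{R}^{\NE}$, whose uniformity over the mesh family I would secure through a uniform bound on $\NE$ provided by Assumption~\ref{ass:mesh}.

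First I would set $\delta=0$ in \eqref{eq:dofi} and note that the exponent $\alpha$ then cancels out: since $\big(h_E^{-\alpha}|\DOF(\b v_h)|^\alpha\big)^{\frac{r-2}{\alpha}}=h_E^{\,2-r}|\DOF(\b v_h)|^{\,r-2}$, one obtains
\[
S_1^E(\b v_h,\b v_h)=\overline{\mu}_E\, h_E^{\,2-r}\,|\DOF(\b v_h)|^{\,r},
\qquad
S_2^E(\b v_h,\b v_h)=\overline{\mu}_E\, h_E^{\,2-r}\sum_{i=1}^{\NE}|\DOFi(\b v_h)|^{\,r},
\]
with the natural convention that a summand vanishes when the corresponding entry is zero (the continuous extension, since $r-2<0$). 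As $\overline{\mu}_E=\Pi^{0,E}_0\mu$ is the mean value of $\mu$ over $E$ and $\mu$ takes values in $[\mu_-,\mu_+]$ with $0<\mu_-<\mu_+<\infty$, we have $\overline{\mu}_E\simeq 1$, so this factor drops out of every $\simeq$-relation; in particular $S_1^E(\b v_h,\b v_h)\simeq h_E^{\,2-r}|\DOF(\b v_h)|^{\,r}$ is immediate. Writing $\b x:=\DOF(\b v_h)\in\mathbb{R}^{\NE}$, what is left is to compare $|\b x|^{\,r}=\|\b x\|_{\ell^2}^{\,r}$, $\sum_i|x_i|^{\,r}=\|\b x\|_{\ell^r}^{\,r}$ and $\max_i|x_i|^{\,r}=\|\b x\|_{\ell^\infty}^{\,r}$: for $r\in(1,2]$, monotonicity of the $\ell^p$-(quasi-)norms in $p$ gives $\|\b x\|_{\ell^\infty}\le\|\b x\|_{\ell^2}\le\|\b x\|_{\ell^r}$, while H\"older's inequality gives the reverse bounds $\|\b x\|_{\ell^r}\le\NE^{\,1/r}\|\b x\|_{\ell^\infty}$ and $\|\b x\|_{\ell^r}\le\NE^{\,1/r-1/2}\|\b x\|_{\ell^2}$, so all three quantities agree up to multiplicative constants depending only on $\NE$ and $r$.

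The remaining, and only substantive, point is a uniform bound $\NE\lesssim 1$, which is where Assumption~\ref{ass:mesh} is used. I would argue that star-shapedness of $E$ with respect to a ball $B_E$ of radius $\ge\rho h_E$ forces $E$ to contain, near every point $\b y\in\partial E$, the cone with apex $\b y$ over $B_E$, whose half-opening is $\ge\arcsin\rho$; hence $\partial E$ is uniformly transversal to the radial directions emanating from the centre of $B_E$, and estimating the perimeter in polar coordinates around that centre yields $|\partial E|\lesssim h_E$. The minimal-edge-length condition $|e|\ge\rho h_E$ then caps the number of edges (equivalently, vertices) of $E$ by a constant depending only on $\rho$, and since $\NE$ equals $k$ times that number plus a $k$-dependent amount coming from the interior-moment degrees of freedom $\mathbf{D_{\boldsymbol{U}}3}$ and $\mathbf{D_{\boldsymbol{U}}4}$, we get $\NE\lesssim 1$ with a constant depending only on $\rho$ and $k$. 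Inserting this into the previous paragraph makes every norm-equivalence constant admissible and closes the proof.

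I expect the genuine obstacle to be exactly this edge-count bound in the last step — the fact that an element cannot carry arbitrarily many (necessarily long) edges while staying star-shaped with respect to a ball that is not too small; everything else is bookkeeping. A secondary point demanding care is to track that each constant absorbed in $\simeq$ depends only on $r$, $\rho$ and $k$, and in particular on neither $h_E$ nor the shape of $E$.
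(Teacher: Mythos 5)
Your proof is correct and follows essentially the same route as the paper: with $\delta=0$ both stabilizations reduce to explicit powers of the DoF vector, and the statement becomes the equivalence of the $\ell^2$, $\ell^r$ and $\ell^\infty$ norms on $\mathbb{R}^{\NE}$, uniform because $\NE$ is uniformly bounded. The only difference is that you spell out why Assumption~\ref{ass:mesh} caps $\NE$ (perimeter bound plus minimal edge length), a point the paper simply asserts.
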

\begin{proof}
By the equivalence of vector norms, since $\NE$ is uniformly bounded for any $E \in \mesh$, we have
$$
S_1^E(\b v_h,\b v_h) := \overline{\mu}_E h_E^{2-r} \vert \DOF(\b v_h) \vert^r \simeq h_E^{2-r} \max_{1 \le i \le \NE} | \DOFi (\b v_h) |^r \, .
$$
Analogously,
$$
S_2^E(\b v_h,\b v_h) := \overline{\mu}_E h_E^{2-r} \sum_{i=1}^{\NE} |\DOFi (\b v_h)|^r \simeq
h_E^{2-r} \max_{1 \le i \le \NE} | \DOFi (\b v_h) |^r 
\simeq h_E^{2-r} \vert \DOF(\b v_h) \vert^r\, .
$$
\end{proof}

We now show the main coercivity and continuity bounds in $\b W^{1,r}(E)$ for the stabilization forms; later we will collect such results into a single Corollary taking into account also the presence of the symmetric gradient.

\begin{lemma}\label{Lem:vemstab-1}
Let the mesh regularity assumptions stated in Assumption~\ref{ass:mesh} hold.
For any $E \in \mesh$ we have
$$
|\b v_h|_{\b W^{1,r}(E)}^r \lesssim S^E(\b v_h,\b v_h) 
\qquad \text{for all $\b v_h \in \VDL$.}
$$
\end{lemma}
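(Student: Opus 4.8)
The plan is to reduce everything to the already-known $L^2$/Hilbertian VEM stability estimate via a scaling argument, using Lemma~\ref{Lem:vemstab-prel} to replace $S^E$ by the explicit quantity $h_E^{2-r}|\DOF(\b v_h)|^r$. First I would recall the standard \texttt{dofi-dofi} stability for the present virtual space in the $H^1$-seminorm setting: there exists $c_*>0$, depending only on $\rho$ and $k$, such that $|\b v_h|_{\b H^1(E)}^2 \lesssim h_E^{0}\,|\DOF(\b v_h)|^2$ after appropriate scaling, or more precisely the scaled equivalence $|\b v_h|_{\b H^1(E)}^2 \simeq |\DOF(\b v_h)|^2$ for $\b v_h$ with $\Pi^{\nabla,E}_k\b v_h$ controlled — this is the content of the classical VEM stabilization analysis in \cite{volley,BdV-Vacca:2022}, valid because the DoFs in \textbf{(P2)} are properly scaled (values at points, and scaled moments). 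Since the degrees of freedom $\mathbf{D_{\boldsymbol U}1}$–$\mathbf{D_{\boldsymbol U}4}$ are dimensionless (they involve $\b v_h$ at points, or moments multiplied by $h_E/|E|$, etc.), a homogeneity/scaling argument on the reference-sized element gives $|\b v_h|_{\b H^1(E)} \lesssim |\DOF(\b v_h)|$.

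Next I would pass from the $H^1$-seminorm to the $W^{1,r}$-seminorm on a single element. Because $r\in(1,2]$, on the bounded domain $E$ one has the local embedding/Hölder inequality $\|\b g\|_{\b L^r(E)} \lesssim |E|^{\frac1r-\frac12}\|\b g\|_{\b L^2(E)}$ applied to $\b g = \GRAD\b v_h$; hence $|\b v_h|_{\b W^{1,r}(E)} \lesssim h_E^{2(\frac1r-\frac12)}\,|\b v_h|_{\b H^1(E)} = h_E^{\frac2r-1}|\b v_h|_{\b H^1(E)}$. Raising to the $r$-th power: $|\b v_h|_{\b W^{1,r}(E)}^r \lesssim h_E^{2-r}\,|\b v_h|_{\b H^1(E)}^r \lesssim h_E^{2-r}\,|\DOF(\b v_h)|^r$. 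By Lemma~\ref{Lem:vemstab-prel}, the right-hand side is $\simeq S^E(\b v_h,\b v_h)$ (note $\overline\mu_E \ge \mu_- > 0$ is bounded below uniformly), which yields the claim.

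One subtlety to handle carefully: the $H^1$-seminorm only controls $\b v_h$ up to constants (rigid translations), while $|\DOF(\b v_h)|$ does see constants; so the inequality $|\b v_h|_{\b H^1(E)}\lesssim|\DOF(\b v_h)|$ is the ``easy'' direction and is the one I need here — the reverse would fail — so no kernel issue arises for this particular bound. I expect the main obstacle to be bookkeeping the scaling exponents correctly (the DoFs in $\mathbf{D_{\boldsymbol U}3}$,$\mathbf{D_{\boldsymbol U}4}$ carry factors $1/|E|$, $h_E/|E|$, and the space is ``enhanced'', so one must verify that the standard scaled stability argument — which for this divergence-free enhanced space is exactly the one used in \cite{BLV:2018} — applies verbatim), and to make the $r$-power manipulation rigorous while keeping all hidden constants independent of $h$ and of $E$, using only mesh-regularity Assumption~\ref{ass:mesh} (which gives $|E|\simeq h_E^2$ and bounds $\NE$ uniformly).
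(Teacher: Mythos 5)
Your proposal is correct and follows essentially the same route as the paper's proof: the paper also first applies the H\"older inequality with exponents $(\tfrac{2}{r},\tfrac{2}{2-r})$ together with $|E|\simeq h_E^2$ to get $|\b v_h|_{\b W^{1,r}(E)}^r \lesssim h_E^{2-r}|\b v_h|_{\b W^{1,2}(E)}^r$, then invokes the scaled DoF stability bound $|\b v_h|_{\b W^{1,2}(E)}\lesssim |\DOF(\b v_h)|$ (cited there as a theorem from the VEM interpolation/stability literature rather than re-derived by scaling), and concludes via Lemma~\ref{Lem:vemstab-prel}. Your remark that only the "easy" direction of the DoF equivalence is needed, so no kernel issue arises, is also accurate.
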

\begin{proof}
Let $E \in \mesh$ and $\b v_h \in \VDL$. Then first by a H\"older inequality with exponents $(\frac{2}{r},\frac{2}{2-r})$ and recalling $|E| \simeq h_E^2$, then applying \cite[Theorem 2]{MBM_23}, we obtain   
$$
|\b v_h|_{\b W^{1,r}(E)}^r \lesssim h_E^{2-r} |\b v_h|_{\b W^{1,2}(E)}^r 
\lesssim h_E^{2-r} \vert \DOF(\b v_h) \vert^r \, .
$$
The result follows combining the bound above with Lemma \ref{Lem:vemstab-prel}.
\end{proof}

\begin{lemma}\label{Lem:vemstab-2}
Let the mesh regularity assumptions stated in Assumption~\ref{ass:mesh} hold.
For any $E \in \mesh$ we have
$$
S^E(\b v_h,\b v_h) \lesssim |\b v_h|_{\b W^{1,r}(E)}^r 
\qquad \text{for all $\b v_h \in \VDL$ s.t. $\ \Proj\b v_h=0$.}
$$
\end{lemma}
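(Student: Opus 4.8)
The plan is to bound $S^E(\b v_h,\b v_h)$ from above by $h_E^{2-r}|\DOF(\b v_h)|^r$ using Lemma~\ref{Lem:vemstab-prel}, and then to estimate the DoF-vector norm $|\DOF(\b v_h)|$ in terms of the $\b W^{1,r}(E)$-seminorm of $\b v_h$, exploiting the hypothesis $\Proj \b v_h = 0$, i.e. $\Pi^{0,E}_k \b v_h = \b 0$. Since the degrees of freedom in \textbf{(P2)} are (up to the fixed scalings by powers of $h_E$ and $|E|$) point values on $\partial E$ and scaled moments of $\b v_h$ and of $\divs\b v_h$, they are all controlled by Sobolev norms of $\b v_h$ on $E$; the issue is to get the \emph{seminorm} on the right-hand side rather than the full norm, which is exactly where $\Proj\b v_h=0$ enters.

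First I would recall, via Lemma~\ref{Lem:vemstab-prel}, that $S^E(\b v_h,\b v_h)\simeq h_E^{2-r}|\DOF(\b v_h)|^r$, so it suffices to show $h_E^{2-r}|\DOF(\b v_h)|^r\lesssim |\b v_h|_{\b W^{1,r}(E)}^r$. I would then pass through the $L^2$/$H^1$ norms: by the norm equivalence for virtual functions (the same estimate from \cite{MBM_23} used in Lemma~\ref{Lem:vemstab-1}, read in the reverse direction together with a scaled trace inequality \eqref{eq:cont_trace} and the moment bounds), one has $|\DOF(\b v_h)|^2\lesssim h_E^{-2}\|\b v_h\|_{\b L^2(E)}^2 + |\b v_h|_{\b H^1(E)}^2$, and more precisely $h_E^{2-r}|\DOF(\b v_h)|^r\lesssim h_E^{2-r}\big(h_E^{-2}\|\b v_h\|_{\b L^2(E)}^2+|\b v_h|_{\b H^1(E)}^2\big)^{r/2}$. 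Next, a H\"older inequality in the exponents $(2/r,2/(2-r))$ together with $|E|\simeq h_E^2$ converts the $L^2$ and $H^1$ quantities into their $L^r$ and $W^{1,r}$ counterparts, reducing the claim to $h_E^{-r}\|\b v_h\|_{\b L^r(E)}^r + |\b v_h|_{\b W^{1,r}(E)}^r \lesssim |\b v_h|_{\b W^{1,r}(E)}^r$; equivalently it remains to prove the scaled Poincar\'e-type bound $\|\b v_h\|_{\b L^r(E)}\lesssim h_E\,|\b v_h|_{\b W^{1,r}(E)}$.

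This last estimate is where the hypothesis $\Proj\b v_h = \Pi^{0,E}_k\b v_h = \b 0$ is used: since $\b v_h$ has vanishing $L^2$-projection onto $[\Pk_k(E)]^2\supseteq[\Pk_0(E)]^2$, in particular $\int_E \b v_h = \b 0$, so a standard Poincar\'e--Wirtinger inequality on the star-shaped domain $E$ (Assumption~\ref{ass:mesh}), with the usual scaling in $h_E$, gives $\|\b v_h\|_{\b L^r(E)}\lesssim h_E\,|\b v_h|_{\b W^{1,r}(E)}$ with a constant depending only on $\rho$ and $r$. Plugging this back collapses the right-hand side to $|\b v_h|_{\b W^{1,r}(E)}^r$ and yields the claim.

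The main obstacle I anticipate is not any single inequality but the careful bookkeeping of the $h_E$-scalings when moving between the Hilbertian estimates (where the virtual-function norm equivalence of \cite{MBM_23} is available) and the $L^r$/$W^{1,r}$ setting required here: one must check that the DoF scalings in \textbf{(P2)} ($h_E^{-1}$ for the point values, $1/|E|$ and $h_E/|E|$ for the moments) combine with the Poincar\'e and H\"older steps to produce exactly the power $h_E^{2-r}$ and no residual negative power of $h_E$. A secondary point worth a line is that the moment DoFs $\mathbf{D_{\boldsymbol{U}}3}$–$\mathbf{D_{\boldsymbol{U}}4}$ involve $\divs\b v_h$, which is controlled by $|\b v_h|_{\b W^{1,r}(E)}$, so they cause no trouble; all the terms ultimately reduce to the one Poincar\'e inequality enabled by $\Proj\b v_h=0$.
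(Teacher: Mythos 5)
There is a genuine gap in your argument, and it sits exactly at the step you flag as mere ``bookkeeping''. Your plan is to first establish the Hilbertian bound $|\DOF(\b v_h)|^2\lesssim h_E^{-2}\|\b v_h\|_{\b L^2(E)}^2+|\b v_h|_{\b H^1(E)}^2$ and then ``convert'' it to the $L^r$/$W^{1,r}$ setting via a H\"older inequality with exponents $(2/r,2/(2-r))$. But for $r\in(1,2)$ H\"older goes the \emph{other} way: it gives $\|f\|_{L^r(E)}\lesssim h_E^{2/r-1}\|f\|_{L^2(E)}$, i.e.\ it bounds the $L^r$ quantity by the $L^2$ quantity (this is precisely the direction used in Lemma~\ref{Lem:vemstab-1}, where the \emph{lower} bound for $S^E$ is needed). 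What your reduction requires is the reverse inequality $\|\b v_h\|_{L^2(E)}\lesssim h_E^{1-2/r}\|\b v_h\|_{L^r(E)}$ and $|\b v_h|_{\b H^1(E)}\lesssim h_E^{(r-2)/r}|\b v_h|_{\b W^{1,r}(E)}$. These are inverse estimates, valid for polynomials by \eqref{eq:inverse} but not automatic for virtual functions; in this paper the seminorm version \eqref{eq:inv_norms} is only obtained as a \emph{corollary} of Lemma~\ref{Lem:vemstab-2} itself, so invoking it here would be circular. As written, the conversion step simply does not follow.

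The way the paper avoids this is to never pass through $L^2$: each degree of freedom is estimated directly against $L^r$-based quantities. For the boundary DoFs one uses the fact that $\b v_h|_{\partial E}$ is an honest piecewise polynomial, so the one-dimensional polynomial inverse estimate $\|\b v_h\|_{\b L^\infty(\partial E)}^r\le h_E^{-1}\|\b v_h\|_{\b L^r(\partial E)}^r$ is legitimate, and then the scaled trace inequality \eqref{eq:cont_trace} and the Poincar\'e inequality (enabled by $\int_E\b v_h=\b 0$) finish the job. For the moment DoFs one applies H\"older with exponents $(r,r')$ and the inverse estimate \eqref{eq:inverse} on the polynomial weight $m_{\boldsymbol{\alpha}}$ (again a genuine polynomial), arriving at $|\DOFi(\b v_h)|\lesssim h_E^{-2/r}\|\b v_h\|_{\b L^r(E)}$, and concludes with the same Poincar\'e inequality. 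Your use of $\Proj\b v_h=0$ to get $\int_E\b v_h=\b 0$ and hence the scaled Poincar\'e bound is correct and coincides with the paper's; it is the intermediate detour through the Hilbertian norms that must be removed and replaced by a direct $L^r$ estimate of the DoFs.
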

\begin{proof}
Let $E \in \mesh$ and $\b v_h \in \VDL$.
Recalling Lemma \ref{Lem:vemstab-prel}, it is sufficient to show that 
$$
\max_{1 \le i \le N_E}h_E^{2-r} | \DOFi (\b v_h) |^r  \lesssim |\b v_h|_{W^{1,r}(E)}^r \qquad \text{for  $1 \le i \le \NE$,}
$$
uniformly in $E$ and $\b v_h$.
We need to handle boundary and bulk types of degrees of freedom separately. 

\smallskip\noindent
\emph{Boundary DoFs.} 
Let $\DOF_i$ be a degree of freedom of type ${\bf D}_U{\bf 1}$ or ${\bf D}_U{\bf 2}$, that is a pointwise evaluation at a generic point $\nu$ on $\partial E$. Then, by recalling that $\b v_h$ is piecewise polynomial on $\partial E$ and applying an inverse estimate, we get
$$
|\DOF_i(\b v_h)|^r = |\b v_h(\nu)|^r \le \| \b v_h \|_{\b L^\infty(\partial E)}^r \le h_E^{-1} \| \b v_h \|_{\b L^r(\partial E)}^r \, .
$$
Since $\Proj\b v_h=0$, we have $\int_{E} \b v_h = 0$. Therefore, we obtain the bound by applying the trace inequality \eqref{eq:cont_trace} followed by Poincar\'e's inequality
$$
h_E^{2-r} |\DOF_i(\b v_h)|^r \le 
h_E^{1-r} \| \b v_h \|_{\b L^r(\partial E)}^r
\le h_E^{1-r} \big( h_E^{-1} \| \b v_h \|_{\b L^r(E)}^r + h_E^{r-1} | \b v_h |_{\b W^{1,r}(E)}^r \big)
\lesssim | \b v_h |_{\b W^{1,r}(E)}^r \, .
$$

\noindent
\emph{Bulk DoFs.} Let now $\DOF_i$ be a degree of freedom of type $\mathbf{D_{\boldsymbol{U}}3}$. We focus on this case only since the proof for $\mathbf{D_{\boldsymbol{U}}4}$ follows with very similar steps. Then, for $m_{\alf} \in {\mathbb M}_{k-3}(E)$,
$$
|\DOF_i(\b v_h)| = \Big| \frac{1}{|E|} \int_E \b v_h \cdot {\bf m}^\perp m_{\alf} {\rm d}E \Big|  \ .
$$
We apply the H\"older inequality, recall that $|E|\simeq h_E^2$, and finally the inverse estimate \eqref{eq:inverse} on the polynomial $m_{\alf}$, yielding
\begin{equation}
\label{eq:per-infsup}
\begin{aligned}
|\DOF_i(\b v_h)| 
& \lesssim h_E^{-2} \| \b v_h \|_{\b L^r(E)} \| {\b m}^\perp \|_{\b L^{\infty}(E)} \| m_{\alf} \|_{ L^{r'}(E)}
\lesssim h_E^{-2} \| \b v_h \|_{\b L^r(E)} h_E^{2/r'} \| m_\alf \|_{L^{\infty}(E)} \\
& \lesssim h_E^{-2} h_E^{2(1-1/r)} \| \b v_h \|_{\b L^r(E)} = h_E^{-2/r} \| \b v_h \|_{\b L^r(E)} \, ,
\end{aligned}
\end{equation}
where we also used that $\| {\bf m}^\perp \|_{\b L^{\infty}(E)}$ and  $\| m_\alf \|_{\b L^{\infty}(E)} \lesssim 1$. 
As mentioned above, we recall that $\int_E \b v_h = 0$. 
First using the above bound, then applying a (scaled) Poincar\'e inequality, finally gives
$$
h_E^{2-r} | \DOFi (\b v_h) |^r \lesssim 
h_E^{-r} \| \b v_h \|_{\b L^r(E)}^r 
\lesssim |\b v_h|_{\b W^{1,r}(E)}^r \, .
$$
\end{proof}
The next instrumental result follows from Lemma \ref{Lem:vemstab-prel}, Lemma \ref{Lem:vemstab-1}, and Lemma \ref{Lem:vemstab-2}.
\begin{corollary}
Let the mesh regularity assumptions stated in Assumption~\ref{ass:mesh} hold. Then for any $E \in \mesh$ we have the following inverse estimate
\begin{equation}
\label{eq:inv_norms}
|\b v_h|_{\b W^{1,2}(E)} \lesssim h_E^{(r-2)/r} |\b v_h|_{\b W^{1,r}(E)}  
\qquad \text{for all $\b v_h \in \VDL$.}
\end{equation}
\end{corollary}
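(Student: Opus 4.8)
The plan is to chain together the three preceding results about the stabilization form to cross between the $\b W^{1,r}$ and $\b W^{1,2}$ seminorms on a single element. First I would reduce the estimate to functions with vanishing $L^2$-projection: since $\Proj$ maps onto $[\Pk_k(E)]^2$ and preserves polynomials, for a generic $\b v_h \in \VDL$ I decompose $\b v_h = \Proj\b v_h + (I-\Proj)\b v_h$. The polynomial part $\Proj\b v_h$ satisfies a scaled polynomial inverse estimate of the form $|\Proj\b v_h|_{\b W^{1,2}(E)} \lesssim h_E^{(r-2)/r} |\Proj\b v_h|_{\b W^{1,r}(E)}$ by \eqref{eq:inverse} applied componentwise to derivatives, and the $\b W^{1,r}$-stability of $\Proj$ on $\VDL$ controls $|\Proj\b v_h|_{\b W^{1,r}(E)}$ by $|\b v_h|_{\b W^{1,r}(E)}$; so it suffices to treat the remainder $\b w_h := (I-\Proj)\b v_h$, which lies in $\VDL$ and satisfies $\Proj\b w_h = 0$.

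For $\b w_h$ with $\Proj\b w_h = 0$, the key is a two-sided norm-equivalence through the stabilization. Applying \cite[Theorem 2]{MBM_23} (the same ingredient used in Lemma~\ref{Lem:vemstab-1}) in reverse, i.e. the bound $|\b w_h|_{\b W^{1,2}(E)}^2 \lesssim |\DOF(\b w_h)|^2$, together with Lemma~\ref{Lem:vemstab-prel} which states $h_E^{2-r}|\DOF(\b w_h)|^r \simeq S^E(\b w_h,\b w_h)$, I get
\begin{equation*}
|\b w_h|_{\b W^{1,2}(E)}^2 \lesssim |\DOF(\b w_h)|^2 \lesssim \big(h_E^{r-2} S^E(\b w_h,\b w_h)\big)^{2/r} = h_E^{2(r-2)/r} S^E(\b w_h,\b w_h)^{2/r}.
\end{equation*}
Then invoking Lemma~\ref{Lem:vemstab-2}, valid precisely because $\Proj\b w_h = 0$, gives $S^E(\b w_h,\b w_h) \lesssim |\b w_h|_{\b W^{1,r}(E)}^r$, and substituting yields $|\b w_h|_{\b W^{1,2}(E)}^2 \lesssim h_E^{2(r-2)/r} |\b w_h|_{\b W^{1,r}(E)}^2$, which is the desired estimate after taking square roots.

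Finally I would recombine: by the triangle inequality in the $\b W^{1,2}$-seminorm, $|\b v_h|_{\b W^{1,2}(E)} \le |\Proj\b v_h|_{\b W^{1,2}(E)} + |\b w_h|_{\b W^{1,2}(E)}$, and both terms are now bounded by $h_E^{(r-2)/r}$ times a $\b W^{1,r}$-seminorm that is in turn controlled by $|\b v_h|_{\b W^{1,r}(E)}$ (using for the remainder that $|\b w_h|_{\b W^{1,r}(E)} \le |\b v_h|_{\b W^{1,r}(E)} + |\Proj\b v_h|_{\b W^{1,r}(E)} \lesssim |\b v_h|_{\b W^{1,r}(E)}$). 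I expect the main obstacle to be the bookkeeping around the projection operator $\Proj$: one must be sure that $\Proj$ is $\b W^{1,r}(E)$-stable on $\VDL$ with constant independent of $h_E$ and of the element shape — this follows from the mesh regularity in Assumption~\ref{ass:mesh} combined with scaling and the polynomial inverse estimate \eqref{eq:inverse}, but it is the one place where a small amount of care is needed rather than a purely mechanical citation. Everything else is a direct concatenation of Lemmas~\ref{Lem:vemstab-prel}, \ref{Lem:vemstab-1}, and \ref{Lem:vemstab-2}.
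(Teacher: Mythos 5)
Your argument is correct, and its core --- the chain $|\b v_h|_{\b W^{1,2}(E)} \lesssim |\DOF(\b v_h)| \lesssim h_E^{(r-2)/r}\bigl(h_E^{2-r}|\DOF(\b v_h)|^r\bigr)^{1/r} \simeq h_E^{(r-2)/r}S^E(\b v_h,\b v_h)^{1/r} \lesssim h_E^{(r-2)/r}|\b v_h|_{\b W^{1,r}(E)}$ --- is exactly the paper's. Where you diverge is in how the hypothesis of Lemma~\ref{Lem:vemstab-2} is met. You split $\b v_h = \Proj\b v_h + (I-\Proj)\b v_h$, run the chain on the remainder (for which the projection vanishes by idempotency), treat the polynomial part with the inverse estimate \eqref{eq:inverse} applied to its gradient, and recombine by the triangle inequality; this forces you to establish the $\b W^{1,r}(E)$-seminorm stability of $\Proj$ on $\VDL$, which does hold under Assumption~\ref{ass:mesh} (subtract the mean, use the $L^r$-stability of the $L^2$-projection on star-shaped elements, an $H^1$--$L^2$-type polynomial inverse estimate, and a scaled Poincar\'e inequality) but is an extra ingredient not literally stated in the paper, and it is the one genuinely delicate point of your write-up --- you rightly flag it. The paper sidesteps the decomposition entirely: since both sides of \eqref{eq:inv_norms} are seminorms, one may subtract the mean value of $\b v_h$ (a constant vector, hence still producing an element of $\VDL$ by \textbf{(P1)}) without changing either side, and the proof of Lemma~\ref{Lem:vemstab-2} only ever uses $\int_E \b v_h = 0$ rather than the full condition $\Proj\b v_h=0$; so the chain applies directly to $\b v_h$ itself. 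The paper's reduction is therefore shorter and avoids any stability property of $\Proj$, while yours is self-contained at the price of that one auxiliary fact.
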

\begin{proof}
We start by combining Lemma \ref{Lem:vemstab-1} (for $r=2$) and Lemma \ref{Lem:vemstab-prel} (still for $r=2$), obtaining
$$
|\b v_h|_{\b W^{1,2}(E)}^2 \lesssim \max_{1 \le i \le \NE} | \DOFi (\b v_h) |^2 \, .
$$
We now take the square root and, using that $\NE$ is uniformly bounded, manipulate as follows 
$$
|\b v_h|_{\b W^{1,2}(E)} \lesssim \max_{1 \le i \le \NE} | \DOFi (\b v_h) | 
\lesssim h_E^{(r-2)/r} \Big( h_E^{2-r} \max_{1 \le i \le \NE} | \DOFi (\b v_h) |^r \Big)^{1/r} \, .
$$
We now apply Lemma \ref{Lem:vemstab-prel} and Lemma \ref{Lem:vemstab-2}, yielding
$$
|\b v_h|_{\b W^{1,2}(E)} \lesssim h_E^{(r-2)/r} S_E(\b v_h,\b v_h)^{1/r} \lesssim h_E^{(r-2)/r} |\b v_h|_{\b W^{1,r}(E)} \, ,
$$
where we observe that in Lemma \ref{Lem:vemstab-2} it is sufficient that $\int_E \b v_h = 0$, something which is not restrictive to assume here since this result only involves semi-norms.
\end{proof}
\begin{corollary}\label{Cor:vemstab}
Let the mesh regularity assumptions stated in Assumption~\ref{ass:mesh} hold.
For any $E \in \mesh$ we have 
$$
\| {\boldsymbol\epsilon} (\b v_h) \|_{\mathbb{L}^r(E)}^r  
\lesssim S^E(\b v_h,\b v_h) \lesssim \| {\boldsymbol\epsilon} (\b v_h) \|_{\mathbb{L}^r(E)}^r 
\qquad \text{for all $\b v_h \in \VDL$ s.t. $\Proj\b v_h=0$.}
$$
\end{corollary}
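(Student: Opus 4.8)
The plan is to deduce the two-sided bound directly from Lemmas \ref{Lem:vemstab-1} and \ref{Lem:vemstab-2}, which are already phrased in terms of the full gradient seminorm, by inserting the elementwise Korn inequality that allows one to pass between $|\b v_h|_{\b W^{1,r}(E)}$ and $\|\b\epsilon(\b v_h)\|_{\mathbb{L}^r(E)}$. For the \emph{lower} bound nothing new is needed: the Frobenius norm of the symmetric part of a matrix is controlled by that of the matrix itself, so pointwise $|\b\epsilon(\b v_h)| \le |\GRAD \b v_h|$ and hence $\|\b\epsilon(\b v_h)\|_{\mathbb{L}^r(E)}^r \le |\b v_h|_{\b W^{1,r}(E)}^r$; combining this with Lemma \ref{Lem:vemstab-1} yields $\|\b\epsilon(\b v_h)\|_{\mathbb{L}^r(E)}^r \lesssim S^E(\b v_h,\b v_h)$ for every $\b v_h \in \VDL$ (the constraint $\Proj\b v_h=\b 0$ plays no role here).

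For the \emph{upper} bound, Lemma \ref{Lem:vemstab-2} already gives $S^E(\b v_h,\b v_h) \lesssim |\b v_h|_{\b W^{1,r}(E)}^r$ under the hypothesis $\Proj\b v_h=\b 0$, so it remains to establish the scaled Korn-type estimate $|\b v_h|_{\b W^{1,r}(E)} \lesssim \|\b\epsilon(\b v_h)\|_{\mathbb{L}^r(E)}$ whenever $\Proj\b v_h=\b 0$, with a constant depending only on $r$ and on the mesh-regularity parameter $\rho$ in Assumption \ref{ass:mesh}. I would start from the quotient form of Korn's second inequality on a domain star-shaped with respect to a ball, namely $\inf_{\b q\in\mathrm{RM}(E)}\|\b w-\b q\|_{1,r,E} \lesssim \|\b\epsilon(\b w)\|_{\mathbb{L}^r(E)}$ for all $\b w\in\b W^{1,r}(E)$, where $\mathrm{RM}(E):=\{\b a+b\,\b x^\perp : \b a\in\R^2,\ b\in\R\}$ is the space of rigid-body motions and $\|\cdot\|_{1,r,E}^r := h_E^{-r}\|\cdot\|_{\b L^r(E)}^r + |\cdot|_{\b W^{1,r}(E)}^r$ is the $h_E$-homogeneous scaled norm; a standard scaling to a reference configuration together with Assumption \ref{ass:mesh} makes the constant uniform over $\Omega_h$. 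The decisive observation is that $\Proj\b v_h=\b 0$ forces, since $k\ge 2\ge 1$, also $\Pi^{0,E}_1\b v_h=\b 0$, i.e. $\b v_h$ is $L^2(E)$-orthogonal to $[\Pk_1(E)]^2\supseteq\mathrm{RM}(E)$, so the $L^2(E)$-orthogonal projection of $\b v_h$ onto $\mathrm{RM}(E)$ vanishes. Denoting by $\b q^*$ the rigid motion attaining the infimum above, it then suffices to show $\|\b q^*\|_{1,r,E}\lesssim\|\b\epsilon(\b v_h)\|_{\mathbb{L}^r(E)}$: one writes $\|\b q^*\|_{\b L^2(E)}\le\|\b v_h-\b q^*\|_{\b L^2(E)}$ (orthogonality), bounds $\|\b v_h-\b q^*\|_{\b L^2(E)}$ by $\|\b v_h-\b q^*\|_{1,r,E}$ via the Sobolev embedding $\b W^{1,r}(E)\hookrightarrow\b L^2(E)$ (valid for $r\in(1,2]$ since $r^*\ge 2$) with the correct power of $h_E$, and finally converts $\|\b q^*\|_{\b L^2(E)}$ back to $\|\b q^*\|_{1,r,E}$ using the polynomial inverse estimates \eqref{eq:inverse}; all powers of $h_E$ cancel because the scaled norm is homogeneous. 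Inserting this into $|\b v_h|_{\b W^{1,r}(E)}\le\|\b v_h-\b q^*\|_{1,r,E}+\|\b q^*\|_{1,r,E}$ concludes.

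The routine ingredients are the pointwise matrix inequality and the bookkeeping of $h_E$-powers. The one genuinely delicate point is the scale-robust quotient Korn inequality on a general star-shaped polygon — in particular, checking that its constant depends only on $\rho$ and $r$ — together with the fact that the degrees-of-freedom condition $\Proj\b v_h=\b 0$ only supplies the $L^2$-orthogonal (rather than the $\b W^{1,r}$-optimal) rigid motion, which must be shown to be harmless; this is where the argument above spends its effort.
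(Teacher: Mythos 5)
Your proof is correct and follows essentially the same route as the paper: the lower bound comes from Lemma \ref{Lem:vemstab-1} together with the pointwise inequality $|\b\epsilon(\b v_h)|\le|\GRAD\b v_h|$, and the upper bound combines Lemma \ref{Lem:vemstab-2} with Korn's second inequality for fields whose $L^2$-projection onto rigid motions vanishes, which is exactly the consequence of $\Proj\b v_h=\b 0$ (with $k\ge 2$) that the paper exploits. The only difference is that the paper simply cites the Korn inequality for rigid-body-motion-free functions (Lewintan--Neff), whereas you re-derive it from the quotient-form Korn inequality by explicitly controlling the optimal rigid motion through $L^2$-orthogonality, scaling, and polynomial inverse estimates -- a sound, self-contained substitute for the citation.
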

\begin{proof}
The first bound follows immediately from Lemma \ref{Lem:vemstab-1}. The second bound is a consequence of 
Lemma \ref{Lem:vemstab-2} combined with Korn's second inequality, cf. \cite[Theorem 2]{Wang:03}. 
Indeed, the assumption $\Proj\b v_h=0$ for $k\ge1$ implies that $\b v_h\in \VDL$ is rigid body motion-free and, as a result of \cite[Theorem 3.3 and Remark 3.4]{Lewintan.Neff:21}, we have
$$
S^E(\b v_h,\b v_h) \lesssim
|\b v_h|_{W^{1,r}(E)}^r \lesssim \| {\boldsymbol\epsilon} (\b v_h) \|_{L^r(E)}^r,
$$
with hidden constants depending on $r$ but independent of $E$ due to Assumption 2.
\end{proof}

We close this section with the following results regarding the stabilization form.

\begin{lemma}[Strong monotonicity of $S^E(\cdot,\cdot)$]\label{lm:monotonicity:local_stab}
Let the mesh regularity assumptions stated in Assumption~\ref{ass:mesh} hold.
Let $\b u_h$, $\b w_h \in \VDL$  and set $\b e_h:=\b u_h - \b w_h$. Then there holds
\begin{equation}\label{eq:monotonicity:local_stab}
    S^E(\b u_h,\b e_h)-S^E(\b w_h,\b e_h) \gtrsim 
    S^E(\b e_h,\b e_h)^{\frac 2 r}\left( 
    h_E^{2-r}( 
    \vert \DOF(\b u_h) \vert^{r} +  \vert \DOF(\b w_h) \vert^{r} )
    \right)^{\frac{r-2}{r}}.
\end{equation}
Moreover if $\b u_h$ and $\b w_h$ are s.t. $\Pi_k^{0,E}\b u_h = \Pi_k^{0,E}\b w_h = \b 0$, then
\begin{equation}\label{eq:monotonicity:local_stab2}
    S^E(\b u_h,\b e_h)-S^E(\b w_h,\b e_h) \gtrsim 
    \Vert \b \epsilon(\b e_h) \Vert^2_{\mathbb{L}^r(E)} \left(  
    \Vert \b \epsilon(\b u_h) \Vert^r_{\mathbb{L}^r(E)} + 
    \Vert \b \epsilon(\b w_h) \Vert^r_{\mathbb{L}^r(E)}
    \right)^{\frac{r-2}{r}}.
\end{equation}
\end{lemma}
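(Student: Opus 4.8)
The natural approach is to reduce the statement about the stabilization form to the scalar/vector algebraic monotonicity bound \eqref{eq:bound_hyp_model}, applied componentwise to the degree-of-freedom vectors. I would work with the choice $S_2^E$, since it is a plain sum over the degrees of freedom and therefore the scalar inequality applies to each summand directly; the equivalence $S_1^E \simeq S_2^E$ from Lemma~\ref{Lem:vemstab-prel} then transfers the conclusion to $S_1^E$. Concretely, for each index $i$ set $x_i := h_E^{-1}\DOFi(\b u_h)$ and $y_i := h_E^{-1}\DOFi(\b w_h)$; then (with $\delta=0$, cf. Remark~\ref{rem:deltazero}) the $i$-th term of $S_2^E(\b u_h,\b e_h) - S_2^E(\b w_h,\b e_h)$ is $\overline\mu_E h_E^2\big( |x_i|^{r-2}x_i - |y_i|^{r-2}y_i\big)\cdot(x_i - y_i)$ (here $\alpha=2$ for concreteness; for general $\alpha$ one first invokes the equivalence of the two $S^E$ choices, which itself rests on norm equivalence in $\R^{N_E}$). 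Applying \eqref{eq:bound_hyp_model} scalarwise gives a lower bound of the form $\overline\mu_E h_E^2 \, |x_i - y_i|^2 (|x_i|^r + |y_i|^r)^{(r-2)/r}$ for each $i$.

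The second step is to sum these bounds over $i$ and recognize the right-hand side. Using a reverse Hölder / power-mean manipulation (the exponents $2/r$ and $(r-2)/r < 0$ are the relevant ones, exactly as in part (ii) of the Lemma on continuity and strong monotonicity of $a$), I would pass from the sum of $|x_i-y_i|^2(|x_i|^r+|y_i|^r)^{(r-2)/r}$ to the quantity
$\big(\sum_i |x_i - y_i|^r\big)^{2/r}\big(\sum_i(|x_i|^r + |y_i|^r)\big)^{(r-2)/r}$, i.e. to
$\big(h_E^{2-r}|\DOF(\b e_h)|^r\big)^{2/r}\big(h_E^{2-r}(|\DOF(\b u_h)|^r + |\DOF(\b w_h)|^r)\big)^{(r-2)/r}$ up to constants depending only on $N_E$ (uniformly bounded) and $r$. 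Finally, $S^E(\b e_h,\b e_h) \simeq h_E^{2-r}|\DOF(\b e_h)|^r$ by Lemma~\ref{Lem:vemstab-prel} and $\overline\mu_E \simeq 1$, which turns this into the claimed \eqref{eq:monotonicity:local_stab}.

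For \eqref{eq:monotonicity:local_stab2}, I would simply post-compose the first estimate with the norm equivalences available under the hypothesis $\Pi_k^{0,E}\b u_h = \Pi_k^{0,E}\b w_h = \b 0$. By Corollary~\ref{Cor:vemstab} (whose proof only needs $\int_E \cdot = 0$, hence applies to each of $\b u_h$, $\b w_h$, $\b e_h$ separately, since these are semi-norm statements), one has $h_E^{2-r}|\DOF(\b v_h)|^r \simeq S^E(\b v_h,\b v_h) \simeq \|\b\epsilon(\b v_h)\|_{\mathbb{L}^r(E)}^r$ for $\b v_h \in \{\b u_h,\b w_h,\b e_h\}$ (for $\b e_h$ note $\Pi_k^{0,E}\b e_h = \b 0$). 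Substituting the term $S^E(\b e_h,\b e_h)^{2/r}$ by $\|\b\epsilon(\b e_h)\|_{\mathbb{L}^r(E)}^2$ and the bracket $\big(h_E^{2-r}(|\DOF(\b u_h)|^r + |\DOF(\b w_h)|^r)\big)^{(r-2)/r}$ by $\big(\|\b\epsilon(\b u_h)\|_{\mathbb{L}^r(E)}^r + \|\b\epsilon(\b w_h)\|_{\mathbb{L}^r(E)}^r\big)^{(r-2)/r}$ — here the exponent $(r-2)/r$ being negative means one must be careful that the substituted quantities are comparable from both sides, which they are — yields \eqref{eq:monotonicity:local_stab2}.

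The main obstacle I anticipate is the summation step (step two): passing from a sum of products $\sum_i A_i B_i^{(r-2)/r}$ with $A_i = |x_i-y_i|^2$, $B_i = |x_i|^r+|y_i|^r$ to the product of sums $\big(\sum_i A_i^{r/2}\big)^{2/r}\big(\sum_i B_i\big)^{(r-2)/r}$. Since $(r-2)/r$ is negative one cannot use Hölder naively in the forward direction; the clean way is to write $A_i = A_i \cdot B_i^{(r-2)/r} \cdot B_i^{(2-r)/r}$... no — rather, one applies Hölder with exponents $(2/r, 2/(2-r))$ to $\sum_i (A_i^{r/2})\cdot 1$ after inserting $B_i^{(2-r)/2}\cdot B_i^{(r-2)/2}$, exactly mirroring the chain of inequalities already displayed in the proof of the continuity/monotonicity lemma for $a$. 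Getting the bookkeeping of exponents right there, and making sure all hidden constants depend only on $r$ and the (uniformly bounded) number of local degrees of freedom $N_E$, is the one place that needs genuine care; everything else is an application of results already in the excerpt.
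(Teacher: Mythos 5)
Your argument for $S_2^E$ is sound: with $\delta=0$ the scalar case of \eqref{eq:bound_hyp_model} applies to each summand, and the reverse-H\"older step you worry about does go through — writing $\sum_i A_i^{r/2} = \sum_i (A_iB_i^{(r-2)/r})^{r/2}B_i^{(2-r)/2}$ and applying H\"older with exponents $(2/r,2/(2-r))$ gives exactly $\sum_i A_iB_i^{(r-2)/r}\ge(\sum_i A_i^{r/2})^{2/r}(\sum_i B_i)^{(r-2)/r}$, after which the $\ell^r$-vs-$\ell^2$ norm equivalence on $\R^{N_E}$ (two-sided, so harmless under the negative exponent) and Lemma~\ref{Lem:vemstab-prel} yield \eqref{eq:monotonicity:local_stab}. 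Your derivation of \eqref{eq:monotonicity:local_stab2} from \eqref{eq:monotonicity:local_stab} via Corollary~\ref{Cor:vemstab} is the same as the paper's.

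The genuine gap is the transfer from $S_2^E$ to $S_1^E$. Lemma~\ref{Lem:vemstab-prel} only asserts $S_1^E(\b v_h,\b v_h)\simeq S_2^E(\b v_h,\b v_h)$, i.e.\ equivalence of the \emph{diagonal} evaluations. The left-hand side of \eqref{eq:monotonicity:local_stab} is the off-diagonal difference $S^E(\b u_h,\b e_h)-S^E(\b w_h,\b e_h)$, and the equivalence of two nonlinear monotone forms on the diagonal does not imply that their monotonicity differences are comparable; so "the equivalence transfers the conclusion to $S_1^E$" is not a valid step. The fix is immediate and is what the paper does: \eqref{eq:bound_hyp_model} is stated for vectors $\b x,\b y\in\R^n$, so apply it once with $\b x=\DOF(\b u_h)$, $\b y=\DOF(\b w_h)$, $n=N_E$, $\delta=0$. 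Since $S_1^E(\b v_h,\w_h)=\overline{\mu}_E h_E^{2-r}|\DOF(\b v_h)|^{r-2}\DOF(\b v_h)\cdot\DOF(\w_h)$ is built from the full DoF vector, this gives
$\vert \DOF(\b e_h)\vert^2(\vert\DOF(\b u_h)\vert^r+\vert\DOF(\b w_h)\vert^r)^{(r-2)/r}\lesssim h_E^{r-2}\bigl(S_1^E(\b u_h,\b e_h)-S_1^E(\b w_h,\b e_h)\bigr)$
in one line, with no componentwise splitting and no summation step; Lemma~\ref{Lem:vemstab-prel} then converts $\vert\DOF(\b e_h)\vert^2$ into $S_1^E(\b e_h,\b e_h)^{2/r}$ up to the right power of $h_E$. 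In other words, each of $S_1^E$ and $S_2^E$ should be handled by a direct application of \eqref{eq:bound_hyp_model} (vector-valued for $S_1^E$, componentwise plus your H\"older step for $S_2^E$), rather than deducing one from the other.
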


\begin{proof}
  Employing \eqref{eq:bound_hyp_model} with 
$\b x= \DOF(\b u_h)$,  $\b y= \DOF(\b w_h) $ and $\delta=0$ and recalling the definition of $S_1^E(\cdot, \cdot)$ in \eqref{eq:dofi} we infer
   \begin{equation}
   \label{eq:strong1}
   \begin{aligned}
       \vert \DOF(\b e_h)\vert^2 (\vert \DOF(\b u_h)\vert^r+\vert \DOF(\b w_h) \vert^r)^{\frac{r-2}{r}} &\lesssim 
       \left(
       (\vert \DOF(\b u_h)\vert^{r-2} \DOF(\b u_h) - 
       \vert \DOF(\b w_h)\vert^{r-2} \DOF(\b w_h)
       \right) \cdot \DOF(\b e_h)
       \\
       &\lesssim h_E^{r-2} 
       \bigl(S_1^E(\b u_h, \b e_h) - S_1^E(\b w_h, \b e_h) \bigr) \,.
  \end{aligned}     
  \end{equation}
Owing to Lemma \ref{Lem:vemstab-prel} it holds
$S_1^E(\b e_h,\b e_h)^{2/r} \simeq h_E^{\frac{2(2-r)}{r}}\vert\DOF(\b e_h)\vert^2$. Therefore from \eqref{eq:strong1} we derive
$$
h_E^{\frac{2(r-2)}{r}} \, S_1^E(\b e_h,\b e_h)^{\frac{2}{r}}
 (\vert \DOF(\b u_h)\vert^r+\vert \DOF(\b w_h) \vert^r)^{\frac{r-2}{r}}
\lesssim h_E^{r-2} 
       \bigl(S_1^E(\b u_h, \b e_h) - S_1^E(\b w_h, \b e_h) \bigr) \,.
$$
Bound \eqref{eq:monotonicity:local_stab} easily follows from the bound above.
Bound \eqref{eq:monotonicity:local_stab2} follows combining in \eqref{eq:monotonicity:local_stab} Lemma \ref{Lem:vemstab-prel} and Corollary \ref{Cor:vemstab}.

The bounds for the stabilization $S_2^E(\cdot, \cdot)$ in \eqref{eq:dofi} can be derived using analogous arguments.
\end{proof}

\begin{corollary}[Strong monotonicity of $S(\cdot,\cdot)$]
Let the mesh regularity assumptions stated in Assumption~\ref{ass:mesh} hold.
Let $\b u_h$, $\b w_h \in \VDG$
and set $\b e_h:=\b u_h - \b w_h$. Then there holds
\begin{equation}\label{eq:monotonicity:global_stab}
    S(\b u_h,\b e_h)-S(\b w_h,\b e_h) 
    \gtrsim S(\b e_h,\b e_h)^{\frac 2 r}\left( 
    \sum_{E\in\Omega_h} h_E^{2-r}( 
    \vert \DOF(\b u_h) \vert^{r} +  \vert \DOF(\b w_h) \vert^{r} )
    \right)^{\frac{r-2}{r}}.
\end{equation}
Moreover if $\b u_h$ and $\b w_h$ are s.t. $\Pi_k^{0}\b u_h = \Pi_k^{0}\b w_h = \b 0$ 
then
\begin{equation}\label{eq:monotonicity:global_stab2}
    S(\b u_h,\b e_h)-S(\b w_h,\b e_h) \gtrsim 
    \Vert \b \epsilon(\b e_h) \Vert^2_{\mathbb{L}^r(\Omega_h)} \left(  
    \Vert \b \epsilon(\b u_h) \Vert^r_{\mathbb{L}^r(\Omega_h)} + 
    \Vert \b \epsilon(\b w_h) \Vert^r_{\mathbb{L}^r(\Omega_h)}
    \right)^{\frac{r-2}{r}}\,.
\end{equation}
\end{corollary}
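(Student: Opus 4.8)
The plan is to reduce the two global estimates to the corresponding local ones of Lemma~\ref{lm:monotonicity:local_stab} by summing over $E\in\Omega_h$ and then invoking a discrete H\"older-type inequality with a \emph{non-positive} conjugate exponent; since $2/r\ge 1$ for $r\in(1,2]$, this can be phrased as Jensen's inequality for the convex function $t\mapsto t^{2/r}$.

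First I would treat \eqref{eq:monotonicity:global_stab}. Recalling $S(\cdot,\cdot)=\sum_E S^E(\cdot,\cdot)$ from \eqref{eq:Sglobal}, I abbreviate $a_E:=S^E(\b e_h,\b e_h)\ge 0$ and $b_E:=h_E^{2-r}\bigl(|\DOF(\b u_h)|^r+|\DOF(\b w_h)|^r\bigr)\ge 0$, so that \eqref{eq:monotonicity:local_stab} reads $S^E(\b u_h,\b e_h)-S^E(\b w_h,\b e_h)\gtrsim a_E^{2/r}b_E^{(r-2)/r}$. Summing over $\Omega_h$ and using $\sum_E a_E=S(\b e_h,\b e_h)$ reduces the claim to the elementary inequality
$$
\sum_{E\in\Omega_h} a_E^{2/r}\,b_E^{(r-2)/r}\;\ge\;\Bigl(\sum_{E\in\Omega_h}a_E\Bigr)^{2/r}\Bigl(\sum_{E\in\Omega_h}b_E\Bigr)^{(r-2)/r}.
$$
This is trivial when $r=2$; for $r\in(1,2)$ I would first discard the elements with $b_E=0$, since there $\DOF(\b u_h)=\DOF(\b w_h)=\b 0$, hence by unisolvence of the DoFs $\b u_h|_E=\b w_h|_E=\b 0$, so $a_E=0$ as well and such elements contribute nothing to either side (and if all $b_E$ vanish both sides of \eqref{eq:monotonicity:global_stab} are zero). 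On the remaining elements I set $\beta_E:=b_E/\sum_{E'}b_{E'}>0$, note $\sum_E\beta_E=1$, and apply Jensen to $t\mapsto t^{2/r}$:
$$
\Bigl(\sum_E a_E\Bigr)^{2/r}=\Bigl(\sum_E \beta_E\,\tfrac{a_E}{\beta_E}\Bigr)^{2/r}\le \sum_E\beta_E\Bigl(\tfrac{a_E}{\beta_E}\Bigr)^{2/r}=\sum_E a_E^{2/r}\beta_E^{(r-2)/r},
$$
which rearranges to the displayed inequality.

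For \eqref{eq:monotonicity:global_stab2} the argument is identical after relabelling $a_E:=\|\b\epsilon(\b e_h)\|_{\mathbb{L}^r(E)}^r$ (so $\|\b\epsilon(\b e_h)\|_{\mathbb{L}^r(E)}^2=a_E^{2/r}$) and $b_E:=\|\b\epsilon(\b u_h)\|_{\mathbb{L}^r(E)}^r+\|\b\epsilon(\b w_h)\|_{\mathbb{L}^r(E)}^r$ and starting from \eqref{eq:monotonicity:local_stab2}; here $b_E=0$ means $\b u_h|_E$ and $\b w_h|_E$ are rigid body motions, which combined with the hypothesis $\Pi_k^{0}\b u_h=\Pi_k^{0}\b w_h=\b 0$ (a degree-$\le1$ field is left invariant by $\Pi_k^{0,E}$ for $k\ge2$) forces $\b u_h|_E=\b w_h|_E=\b 0$, hence again $a_E=0$, so the same reduction and Jensen step go through. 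Summing over $\Omega_h$ and using $\sum_E\|\,\cdot\,\|_{\mathbb{L}^r(E)}^r=\|\,\cdot\,\|_{\mathbb{L}^r(\Omega_h)}^r$ gives \eqref{eq:monotonicity:global_stab2}.

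The only point requiring genuine care is this discrete H\"older/Jensen step: because the exponent $(r-2)/r$ is $\le 0$, the naive term-by-term bound runs the wrong way, so one must first remove the elements on which the weights $b_E$ vanish — which, as observed, are exactly those on which $\b u_h$ and $\b w_h$ are identically zero (or, in the second case, a rigid body motion annihilated by the projection hypothesis) and hence harmless. Everything else is bookkeeping already carried out in Lemma~\ref{lm:monotonicity:local_stab}.
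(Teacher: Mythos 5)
Your proof is correct, and it rests on the same two ingredients as the paper's: the local monotonicity bounds of Lemma~\ref{lm:monotonicity:local_stab} plus a H\"older-type summation over $\Omega_h$. The difference is purely in how the summation step is executed. You sum the local lower bounds as they stand and then need the ``reverse'' inequality $\sum_E a_E^{2/r}b_E^{(r-2)/r}\ge(\sum_E a_E)^{2/r}(\sum_E b_E)^{(r-2)/r}$, which you obtain from Jensen applied to $t\mapsto t^{2/r}$ with weights $\beta_E=b_E/\sum_{E'}b_{E'}$; since the exponent on $b_E$ is non-positive, this forces you to first excise the elements with $b_E=0$ and argue (via unisolvence of the DoFs, resp.\ the rigid-body-motion argument combined with $\Pi_k^0\b u_h=\Pi_k^0\b w_h=\b 0$) that they contribute nothing. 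The paper sidesteps this entirely by first rewriting the local bound in the form $S^E(\b e_h,\b e_h)\lesssim\bigl(S^E(\b u_h,\b e_h)-S^E(\b w_h,\b e_h)\bigr)^{r/2}b_E^{(2-r)/2}$, so that the standard discrete H\"older inequality with the positive conjugate pair $(2/r,2/(2-r))$ applies term by term with no degenerate cases, and then raises the result to the power $2/r$. The two arguments are equivalent in substance; the paper's ordering is just slightly cleaner because vanishing weights never need special treatment, while your version makes explicit why those elements are harmless.
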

\begin{proof}
Applying Lemma \ref{lm:monotonicity:local_stab} and employing the H\"older inequality with exponents $(\frac 2 r, \frac{2}{2-r})$,
direct computations yield
$$
\begin{aligned}
S(\b e_h,\b e_h)& =\sum_{E\in\Omega_h} S^E(\b e_h,\b e_h)  \lesssim
\sum_{E\in\Omega_h} 
\bigl(S^E(\b u_h, \b e_h) -  S^E(\b w_h, \b e_h)\bigr)^{\frac{r}{2}}
\left( 
    h_E^{2-r}( 
    \vert \DOF(\b u_h) \vert^{r} +  \vert \DOF(\b w_h) \vert^{r} )
    \right)^{\frac{2-r}{2}}
    \\
&\lesssim
 \bigl(S(\b u_h, \b e_h) -  S(\b w_h, \b e_h)\bigr)^{\frac{r}{2}}
\left( \sum_{E\in\Omega_h}
    h_E^{2-r}( 
    \vert \DOF(\b u_h) \vert^{r} +  \vert \DOF(\b w_h) \vert^{r} )
    \right)^{\frac{2-r}{2}}
\end{aligned}
$$
Raising both sides of the bound above to $2/r$ we obtain \eqref{eq:monotonicity:global_stab}.
Recalling definition \eqref{eq:normebroken} and  employing again the H\"older inequality with exponents $(\frac 2 r, \frac{2}{2-r})$, bound \eqref{eq:monotonicity:global_stab2} can be derived from \eqref{eq:monotonicity:local_stab2} as follows
$$
\begin{aligned}
\Vert \b \epsilon(\b e_h) \Vert_{\mathbb{L}^r(\Omega_h)}^r & 
\lesssim
\sum_{E \in \Omega_h} \bigl(S^E(\b u_h, \b e_h) - S^E(\b w_h, \b e_h) \bigr)^{\frac{r}{2}} \bigl( \Vert \b \epsilon(\b u_h) \Vert_{\mathbb{L}^r(E)}^r + \Vert \b \epsilon(\b w_h) \Vert_{\mathbb{L}^r(E)}^r\bigr)^{\frac{2-r}{2}}
\\
& \lesssim
\bigl(S(\b u_h, \b e_h) - S(\b w_h, \b e_h) \bigr)^{\frac{r}{2}} 
\bigl( \Vert \b \epsilon(\b u_h) \Vert_{\mathbb{L}^r(\Omega_h)}^r + \Vert \b \epsilon(\b w_h) \Vert_{\mathbb{L}^r(\Omega_h)}^r\bigr)^{\frac{2-r}{2}} \,.
\end{aligned}
$$
Raising both sides of the bound above to $2/r$ we obtain \eqref{eq:monotonicity:global_stab2}.
\end{proof}

\begin{lemma}[H\"older continuity of $S^E(\cdot,\cdot)$]
    Let the mesh regularity assumptions stated in Assumption~\ref{ass:mesh} hold.
    Let $\b u_h$, $\b w_h \in \VDL$  and set $\b e_h:=\b u_h - \b w_h$. Then there holds
    \begin{equation}
    \label{eq:continuity:local_stab1}
        \vert S^E(\b u_h,\b v_h)-S^E(\w_h,\b v_h)\vert
        \lesssim h_E^{2-r} \vert \DOF(\b e_h) \vert^{r-1} \vert \DOF(\b v_h) \vert \quad \text{for all $\b v_h \in \VDL$.}
    \end{equation}
Moreover if $\b u_h$, $\b w_h$ and $\b v_h$ are s.t. $\Pi_k^{0,E}\b u_h = \Pi_k^{0,E}\b w_h = \Pi_k^{0,E}\b v_h = \b 0$, then
\begin{equation}\label{eq:continuity:local_stab22}
    |S^E(\b u_h,\b v_h)-S^E(\b w_h,\b v_h)| \lesssim
    \Vert \b e_h \Vert^{r-1}_{\b W^{1,r}(E)}
    \Vert \b v_h \Vert_{\b W^{1,r}(E)} \,.
\end{equation}  
\end{lemma}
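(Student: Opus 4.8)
The plan is to carry out the argument for the choice $S^E = S^E_1$ only, with $\delta = 0$ as fixed in Remark~\ref{rem:deltazero}; the case $S^E_2$ follows from the very same manipulations applied componentwise (using the scalar version of the algebraic inequality below on each $\chi_i$), and $\delta > 0$ is a routine variant. First I would specialize the definition \eqref{eq:dofi}: with $\delta = 0$ the weight $(\delta^\alpha + h_E^{-\alpha}|\DOF(\b v_h)|^\alpha)^{\frac{r-2}\alpha}$ collapses to $h_E^{2-r}|\DOF(\b v_h)|^{r-2}$, so that
\[
S^E_1(\b u_h,\b v_h) - S^E_1(\w_h,\b v_h) = \overline{\mu}_E\, h_E^{2-r}\Bigl( |\DOF(\b u_h)|^{r-2}\DOF(\b u_h) - |\DOF(\w_h)|^{r-2}\DOF(\w_h) \Bigr) \cdot \DOF(\b v_h).
\]
The whole proof then rests on the elementary algebraic inequality $\bigl||\b x|^{r-2}\b x - |\b y|^{r-2}\b y\bigr| \lesssim |\b x-\b y|^{r-1}$, valid for $r\in(1,2]$ and all $\b x,\b y\in\R^n$: this is the H\"older-continuity counterpart of \eqref{eq:bound_hyp_model} (see e.g. \cite{BL:1993,Barrett.Liu:94}), and since it only involves Euclidean norms it is dimension-independent; it is proved exactly as the chain \eqref{eq:pre_holdercont}--\eqref{eq:for-holder}, i.e. by combining the H\"older continuity \eqref{eq:hypo.continuity} specialized to the power law \eqref{eq:Carreau} with $\delta=0$ and estimate \eqref{eq:pre_holdercont}.

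For \eqref{eq:continuity:local_stab1} I would apply the Cauchy--Schwarz inequality in $\R^{N_E}$ to extract the factor $|\DOF(\b v_h)|$, bound $\overline{\mu}_E = \Pi^{0,E}_0\mu \le \mu_+$, and invoke the algebraic inequality above with $\b x = \DOF(\b u_h)$ and $\b y = \DOF(\w_h)$, recalling $\DOF(\b e_h) = \DOF(\b u_h) - \DOF(\w_h)$ by linearity of the DoFs. This immediately yields $\vert S^E(\b u_h,\b v_h)-S^E(\w_h,\b v_h)\vert \lesssim h_E^{2-r}|\DOF(\b e_h)|^{r-1}|\DOF(\b v_h)|$, which is \eqref{eq:continuity:local_stab1}.

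To pass to \eqref{eq:continuity:local_stab22}, I would rewrite the right-hand side of \eqref{eq:continuity:local_stab1} as $\bigl(h_E^{2-r}|\DOF(\b e_h)|^r\bigr)^{\frac{r-1}r}\bigl(h_E^{2-r}|\DOF(\b v_h)|^r\bigr)^{\frac1r}$, which is legitimate because the $h_E$-powers and the DoF-vector powers recombine correctly (indeed $(2-r)\frac{r-1}{r} + (2-r)\frac1r = 2-r$), then use Lemma~\ref{Lem:vemstab-prel} to replace the two bracketed quantities by $S^E(\b e_h,\b e_h)$ and $S^E(\b v_h,\b v_h)$ respectively. Finally, observing that the hypotheses force $\Pi^{0,E}_k\b e_h = \Pi^{0,E}_k(\b u_h-\w_h) = \b 0$ in addition to $\Pi^{0,E}_k\b v_h = \b 0$, Corollary~\ref{Cor:vemstab} applies and bounds $S^E(\b e_h,\b e_h)\lesssim\|\b\epsilon(\b e_h)\|_{\mathbb{L}^r(E)}^r$ and $S^E(\b v_h,\b v_h)\lesssim\|\b\epsilon(\b v_h)\|_{\mathbb{L}^r(E)}^r$; since $\|\b\epsilon(\cdot)\|_{\mathbb{L}^r(E)}\le\|\cdot\|_{\b W^{1,r}(E)}$, this gives \eqref{eq:continuity:local_stab22}.

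I expect the only genuinely substantive point to be the algebraic H\"older inequality for the DoF vectors; everything else — Cauchy--Schwarz, boundedness of $\overline{\mu}_E$, the bookkeeping of the $h_E$-powers, and the applicability of Corollary~\ref{Cor:vemstab} (which hinges on noticing that $\b e_h$ inherits the vanishing projection from $\b u_h$ and $\w_h$) — is routine. The mild pitfall to watch is keeping the exponents $\frac{r-1}r$ and $\frac1r$ correctly paired when splitting \eqref{eq:continuity:local_stab1}, and remembering that $S^E(\cdot,\cdot)$, $|\DOF(\cdot)|$ and $\|\b\epsilon(\cdot)\|_{\mathbb{L}^r(E)}$ are only \emph{equivalent}, so all constants hidden in $\lesssim$ depend on $r$ and on the mesh-regularity constant $\rho$ of Assumption~\ref{ass:mesh}, but not on $E$ or $h$.
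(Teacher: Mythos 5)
Your proposal is correct and follows essentially the same route as the paper: the first bound is obtained by expanding the difference, applying the H\"older-continuity inequality for the map $\b x\mapsto|\b x|^{r-2}\b x$ (i.e. \eqref{eq:hypo.continuity} combined with \eqref{eq:pre_holdercont}) together with Cauchy--Schwarz on the DoF vectors, and the second bound follows by recombining the powers of $h_E$ and $|\DOF(\cdot)|$ and invoking Lemma~\ref{Lem:vemstab-prel} and Lemma~\ref{Lem:vemstab-2} (equivalently, Corollary~\ref{Cor:vemstab}), noting that $\b e_h$ inherits the vanishing projection. Your explicit exponent bookkeeping $(2-r)\tfrac{r-1}{r}+(2-r)\tfrac1r=2-r$ just spells out what the paper leaves as "a direct consequence".
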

\begin{proof}
    Recalling the definition of $S^E_1(\cdot,\cdot)$  in \eqref{eq:dofi} with $\delta=0$, employing \eqref{eq:hypo.continuity} and \eqref{eq:pre_holdercont}, bound \eqref{eq:continuity:local_stab1} can be derived as follows
    \begin{equation*}
    \begin{aligned}
    S^E_1(\b u_h,\b v_h)-S^E_1(\b w_h,\b v_h) 
    &= h_E^{2-r} \left(\vert \DOF(\b u_h) \vert^{r-2} \DOF(\b u_h) 
      - \vert \DOF(\b w_h) \vert^{r-2} \DOF (\b w_h))\right)\cdot
      \DOF(\b v_h) 
      \\
     &\lesssim
     h_E^{2-r} \left(\vert \DOF(\b u_h) \vert^{r} 
      + \vert \DOF(\b w_h) \vert^{r}\right)^{\frac{r-2}{r}} \vert\DOF (\b e_h) \vert \,
      \vert\DOF(\b v_h)\vert  
      \\
     &\lesssim
     h_E^{2-r} \vert \DOF(\b e_h) \vert^{r-2} \vert\DOF (\b e_h) \vert \,
      \vert\DOF(\b v_h)\vert
= h_E^{2-r} \vert \DOF(\b e_h) \vert^{r-1} \,\vert\DOF(\b v_h)\vert        \,.
    \end{aligned}
    \end{equation*}
Bound \eqref{eq:continuity:local_stab22} is a direct consequence of \eqref{eq:continuity:local_stab1}, Lemma \ref{Lem:vemstab-prel} and  Lemma \ref{Lem:vemstab-2}.
    The result for $S^E_2(\cdot,\cdot)$ follows by the same argument applying the H\"older inequality.
\end{proof}
\begin{lemma}[H\"older continuity of $S(\cdot,\cdot)$]
\label{Lemma:S_holderG}
    Let the mesh regularity assumptions stated in Assumption~\ref{ass:mesh} hold.
    Let $\b u_h$, $\b w_h \in \VDG$  and set $\b e_h:=\b u_h - \b w_h$. Then there holds
    \begin{equation}
    \label{eq:continuity:local_stabG1}
        \vert S(\b u_h,\b v_h)-S(\w_h,\b v_h)\vert
        \lesssim 
        \sum_{E \in \Omega_h}h_E^{2-r} \vert \DOF(\b e_h) \vert^{r-1} \vert \DOF(\b v_h) \vert \quad \text{for all $\b v_h \in \VDL$.}
    \end{equation}
Moreover if $\b u_h$ and $\b w_h$ are s.t. $\Pi_k^{0}\b u_h = \Pi_k^{0}\b w_h = \b 0$, then
\begin{equation}\label{eq:continuity:local_stabG2}
    |S(\b u_h,\b v_h)-S(\b w_h,\b v_h)| \lesssim
    \Vert \b e_h \Vert^{r-1}_{\b W^{1,r}(\Omega_h)}
    \Vert \b v_h \Vert_{\b W^{1,r}(\Omega_h)} \,.
\end{equation}  
\end{lemma}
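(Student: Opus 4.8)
The plan is to obtain both estimates purely by assembling over the mesh the already proven local H\"older bounds \eqref{eq:continuity:local_stab1} and \eqref{eq:continuity:local_stab22} for $S^E$, using only the additivity $S(\cdot,\cdot)=\sum_{E\in\Omega_h}S^E(\cdot,\cdot)$, the norm equivalences of Lemma~\ref{Lem:vemstab-prel}, the local bound of Lemma~\ref{Lem:vemstab-2}, and a single discrete H\"older inequality. For \eqref{eq:continuity:local_stabG1} I would simply note that $S^E(\cdot,\cdot)$ is linear in its second argument, so that $S(\b u_h,\b v_h)-S(\b w_h,\b v_h)=\sum_{E\in\Omega_h}\bigl(S^E(\b u_h,\b v_h)-S^E(\b w_h,\b v_h)\bigr)$ with the same $\b e_h=\b u_h-\b w_h$ restricted to each $E$; the triangle inequality followed by \eqref{eq:continuity:local_stab1} on every element gives \eqref{eq:continuity:local_stabG1} with nothing further to do.

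For \eqref{eq:continuity:local_stabG2} I would proceed from the elementwise inequality just obtained. On each $E$ I would split the summand into ``stabilization-energy'' factors, $h_E^{2-r}|\DOF(\b e_h)|^{r-1}|\DOF(\b v_h)|=\bigl(h_E^{2-r}|\DOF(\b e_h)|^{r}\bigr)^{\frac{r-1}{r}}\bigl(h_E^{2-r}|\DOF(\b v_h)|^{r}\bigr)^{\frac1r}$, and invoke Lemma~\ref{Lem:vemstab-prel} to identify the two parentheses, up to $h$-uniform constants, with $S^E(\b e_h,\b e_h)^{\frac{r-1}{r}}$ and $S^E(\b v_h,\b v_h)^{\frac1r}$. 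A discrete H\"older inequality with conjugate exponents $\bigl(\frac{r}{r-1},r\bigr)$ over $E\in\Omega_h$ then yields $|S(\b u_h,\b v_h)-S(\b w_h,\b v_h)|\lesssim S(\b e_h,\b e_h)^{\frac{r-1}{r}}S(\b v_h,\b v_h)^{\frac1r}$. Finally, since $\Pi_k^{0}\b e_h=\b 0$, Lemma~\ref{Lem:vemstab-2} applied elementwise and summed gives $S(\b e_h,\b e_h)\lesssim|\b e_h|_{\b W^{1,r}(\Omega_h)}^{r}\le\|\b e_h\|_{\b W^{1,r}(\Omega_h)}^{r}$, and analogously $S(\b v_h,\b v_h)\lesssim\|\b v_h\|_{\b W^{1,r}(\Omega_h)}^{r}$, which concludes \eqref{eq:continuity:local_stabG2}. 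Equivalently, one may skip the $\DOF$-manipulation and simply sum \eqref{eq:continuity:local_stab22} over $E$ and then apply the same discrete H\"older step, which is the shortest route.

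There is no genuine difficulty here beyond bookkeeping: the only point to be careful about is exactly which arguments must be $\P0$-orthogonal. Controlling the $S(\b v_h,\b v_h)^{1/r}$ factor via Lemma~\ref{Lem:vemstab-2} (equivalently, the validity of the local bound \eqref{eq:continuity:local_stab22}) requires $\Pi_k^{0,E}\b v_h=\b 0$ on every element, so \eqref{eq:continuity:local_stabG2} is to be read under the convention inherited from \eqref{eq:continuity:local_stab22} that $\b v_h$, together with $\b u_h$ and $\b w_h$, has vanishing $\P0$-projection; without this, the terms $h_E^{2-r}|\DOF(\b v_h)|^r$ cannot be bounded by $\|\b v_h\|_{\b W^{1,r}}^r$ uniformly in $h$ (take $\b v_h$ constant on $E$). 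With that in place, the whole argument is a summation-plus-H\"older exercise resting entirely on the local lemmas already at hand.
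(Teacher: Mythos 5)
Your proof is correct and follows essentially the same route as the paper's: for \eqref{eq:continuity:local_stabG1} sum the local bound \eqref{eq:continuity:local_stab1} over the elements, and for \eqref{eq:continuity:local_stabG2} sum the local bound \eqref{eq:continuity:local_stab22} (your DoF-level factorization combined with Lemma~\ref{Lem:vemstab-prel} and Lemma~\ref{Lem:vemstab-2} is just an unpacked version of the same thing) and conclude with the discrete H\"older inequality with exponents $(r',r)$. Your side remark is also well taken: the argument rests on \eqref{eq:continuity:local_stab22}, which requires $\Pi_k^{0,E}\b v_h=\b 0$ as well, a hypothesis the global statement omits but which is satisfied where the lemma is actually invoked (the estimate of $T_4$ in Proposition~\ref{prop:main:2}, where the third argument is $\widetilde{\b w}_h=(I-\Pi^0_k)\b w_h$).
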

\begin{proof}
Bound \eqref{eq:continuity:local_stabG1}  easily follows from  \eqref{eq:continuity:local_stab1}. 
Employing the the H\"older inequality with exponents $(r', r)$,  bound \eqref{eq:continuity:local_stabG2}  can be derived from \eqref{eq:continuity:local_stab22} as follows
\[
\begin{aligned}
|S(\b u_h,\b v_h)-S(\b w_h,\b v_h)| &\le
\sum_{E \in \Omega_h} |S^E(\b u_h,\b v_h)-S^E(\b w_h,\b v_h)|
 \lesssim
 \sum_{E \in \Omega_h}
    \Vert \b e_h \Vert^{r-1}_{\b W^{1,r}(E)}
    \Vert \b v_h \Vert_{\b W^{1,r}(E)}    
  \\
  & \lesssim
  \Vert \b e_h \Vert^{r-1}_{\b W^{1,r}(\Omega_h)}
    \Vert \b v_h \Vert_{\b W^{1,r}(\Omega_h)} \,.   
\end{aligned}
\]
\end{proof}
\subsubsection{Discrete inf-sup condition}\label{sec:infsup}
\def\ww{{\bf w}}
We here prove a discrete inf-sup condition analogous to the continuous one \eqref{eq:inf_sup}. Note that the argument, especially in the first part, has important differences from that developed in \cite{BLV:2017} for $r=2$, because here we cannot exploit a ``minimum energy'' argument on the elements. We use a more direct approach based on the previous lemmas regarding the discrete stability form.  

\begin{lemma}[Discrete inf-sup]\label{inf-sup:vem}
Let the mesh regularity assumptions stated in Assumption~\ref{ass:mesh} hold. Then, for any $r\in(1,\infty)$ it exists a constant $\overline{\beta}(r)$, such that
$$
\inf_{q_h\in \QDG}\;\sup_{\b w_h\in \VDG}\; 
\frac{b(\b w_h,q_h)}{\norm[L^{r'}(\Omega)]{q_h}\norm[\b{W}^{1,r}(\Omega)]{\b w_h}} 
\ge \overline{\beta}(r) > 0.
$$
\end{lemma}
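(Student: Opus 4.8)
The plan is to reduce the discrete inf-sup condition to the continuous one \eqref{eq:inf_sup} by constructing, for every $q_h\in\QDG\subset P$, a Fortin-type interpolation operator $\Pi_F\colon\b U\to\VDG$ that preserves the relevant pairing with piecewise polynomials of degree $k-1$ and is stable in the $\b W^{1,r}$-norm. Concretely, given $q_h\in\QDG$, by the continuous inf-sup \eqref{eq:inf_sup} pick $\b w\in\b U$ with $\norm[\b W^{1,r}(\Omega)]{\b w}=1$ and $b(\b w,q_h)\gtrsim\beta(r)\norm[L^{r'}(\Omega)]{q_h}$; then set $\b w_h:=\Pi_F\b w$. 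If $\Pi_F$ satisfies $b(\Pi_F\b w,q_h)=b(\b w,q_h)$ for all $q_h\in\QDG$ — equivalently $\divs(\Pi_F\b w-\b w)\perp\Pk_{k-1}(\Omega_h)$, i.e. $\Pi^0_{k-1}\divs(\Pi_F\b w)=\Pi^0_{k-1}\divs\b w$ — and the stability bound $\norm[\b W^{1,r}(\Omega)]{\Pi_F\b w}\lesssim\norm[\b W^{1,r}(\Omega)]{\b w}$, then the discrete inf-sup follows immediately with $\overline\beta(r)\simeq\beta(r)$.

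To build $\Pi_F$ I would follow the now-standard two-step VEM construction but carry out the norm estimates in $\b W^{1,r}$ rather than $\b H^1$. First, for each element $E$ and each $\b v\in\b W^{1,r}(E)$ (with enough regularity, or after a preliminary smoothing/averaging that is itself $\b W^{1,r}$-stable) define the local interpolant $\b v_I\in\VDL$ using the DoF set $\mathbf{D_{\boldsymbol U}1}$, $\mathbf{D_{\boldsymbol U}2}'$, $\mathbf{D_{\boldsymbol U}3}$, $\mathbf{D_{\boldsymbol U}4}$ introduced above. The point of using the alternative edge DoFs $\mathbf{D_{\boldsymbol U}2}'$ and the volume DoFs $\mathbf{D_{\boldsymbol U}4}$ is exactly that they are integral (moment) functionals against polynomials, hence bounded on $\b W^{1,r}$ by Hölder's inequality and scaling; combined with the polynomial inclusion \textbf{(P1)} and a Bramble--Hilbert / Deny--Lions argument one gets the local stability $\norm[\b W^{1,r}(E)]{\b v_I}\lesssim\norm[\b W^{1,r}(E)]{\b v}$ plus a first-order approximation estimate (this is the content of Lemma~\ref{lem:approx-interp}, whose interpolation bound I would invoke directly). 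Crucially, DoF $\mathbf{D_{\boldsymbol U}4}$ fixes all moments of $\divs\b v_h$ of positive degree, so $\int_E(\divs\b v_I)m_{\boldsymbol\alpha}=\int_E(\divs\b v)m_{\boldsymbol\alpha}$ for every monomial of degree $\ge1$; to also match the degree-zero moment $\int_E\divs\b v=\int_{\partial E}\b v\cdot\b n$ one uses that $\mathbf{D_{\boldsymbol U}1}$ and $\mathbf{D_{\boldsymbol U}2}'$ determine $\b v_h|_{\partial E}$ together with all its edge moments up to degree $k-2$, hence the boundary flux against $\Pk_{k-1}$ pieces is preserved edgewise. Summing over $E$ gives $\Pi^0_{k-1}\divs(\b v_I)=\Pi^0_{k-1}\divs\b v$ globally, i.e. the Fortin commuting property, and summing the local $\b W^{1,r}(E)$ stability bounds (using $\ell^r$-summability of the $p$-th powers as in \eqref{eq:normebroken}) gives global $\b W^{1,r}(\Omega)$ stability. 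Continuity of $\b v_I$ across edges, hence membership in $\VDG$, is automatic because the DoFs $\mathbf{D_{\boldsymbol U}1},\mathbf{D_{\boldsymbol U}2}'$ are shared and determine the common edge traces.

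The main obstacle — and where the proof genuinely differs from the Hilbertian case of \cite{BLV:2017} — is the stability of the local interpolant in the non-Hilbertian norm $\b W^{1,r}$ for $r\ne2$: one cannot invoke the $H^1$-orthogonality / minimum-energy characterization of the virtual functions. The remedy, as the remark preceding the lemma hints, is to go through the explicit DoF functionals: bound $|\b v_I|_{\b W^{1,r}(E)}$ by $h_E^{(2-r)/r}|\b v_I|_{\b W^{1,2}(E)}$ via the Hölder inequality with exponents $(2/r,2/(2-r))$ on the element (as in Lemma~\ref{Lem:vemstab-1}), control $|\b v_I|_{\b W^{1,2}(E)}$ by the Euclidean norm of the DoF vector $\DOF(\b v_I)$ using \cite[Theorem 2]{MBM_23} (the $r=2$ stability), and finally bound each DoF of $\b v_I$ — which equals the corresponding DoF of $\b v$ — by $\norm[\b W^{1,r}(E)]{\b v}$ (after the usual scaled Poincaré reduction to seminorms, treating boundary DoFs via the trace inequality \eqref{eq:cont_trace} and bulk DoFs via Hölder and the polynomial inverse estimate \eqref{eq:inverse}, exactly as in the proof of Lemma~\ref{Lem:vemstab-2}). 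Chaining these gives $\norm[\b W^{1,r}(E)]{\b v_I}\lesssim\norm[\b W^{1,r}(E)]{\b v}$ with constants depending only on $\rho$ and $r$, and the global result follows by summation. I would also note the minor technical point that the interpolant is a priori only defined for $\b v$ with $s>2/p$ (here $p=r$); since in the inf-sup argument one may first replace the abstract $\b w\in\b W^{1,r}$ by a slightly smoother function at the cost of an arbitrarily small perturbation of $b(\b w,q_h)$ and $\norm[\b W^{1,r}]{\b w}$, or equivalently argue by density, this causes no loss, and the constant $\overline\beta(r)$ can be taken as any fixed fraction of $\beta(r)$ over the Fortin stability constant.
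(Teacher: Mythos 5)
Your overall strategy --- reducing to the continuous inf-sup condition \eqref{eq:inf_sup} through a Fortin operator that commutes with the $L^2$-projection of the divergence and is $\b W^{1,r}$-stable --- is exactly the route the paper takes, and your verification of the commuting property (the $\mathbf{D_{\boldsymbol{U}}4}$ moments for positive degree plus the lowest-order edge-normal moments from $\mathbf{D_{\boldsymbol{U}}2}'$ for the constant test function) is essentially correct. The gap is in the stability step. You propose to take as Fortin operator the nodal interpolant built on $\mathbf{D_{\boldsymbol{U}}1}$, $\mathbf{D_{\boldsymbol{U}}2}'$, $\mathbf{D_{\boldsymbol{U}}3}$, $\mathbf{D_{\boldsymbol{U}}4}$ and to bound ``each DoF of $\b v_I$ --- which equals the corresponding DoF of $\b v$ --- by $\norm[\b W^{1,r}(E)]{\b v}$'' by mimicking the proof of Lemma~\ref{Lem:vemstab-2}. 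That proof, however, controls a point-value DoF of a \emph{virtual} function $\b v_h$ via the edge inverse inequality $\|\b v_h\|_{\b L^\infty(\partial E)}^r\le h_E^{-1}\|\b v_h\|_{\b L^r(\partial E)}^r$, which is available only because $\b v_h$ is piecewise polynomial on $\partial E$. For the interpolant the vertex DoF is $\b v_I(\nu)=\b v(\nu)$, an evaluation of the \emph{target} function, and in $d=2$ the functional $\b v\mapsto \b v(\nu)$ is not bounded on $\b W^{1,r}(E)$ for any $r\le 2$ (which is the entire range of interest here); the trace inequality \eqref{eq:cont_trace} gives only $L^r(\partial E)$ control and cannot produce the needed $L^\infty$ bound. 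Your proposed fix by density fails for the same reason: one can approximate $\b w\in\b W^{1,r}$ by smooth $\b w_\varepsilon$ with $\|\b w_\varepsilon\|_{\b W^{1,r}}$ bounded but $|\b w_\varepsilon(\nu)|\to\infty$ (e.g.\ regularizations of a logarithmic singularity at $\nu$), so the stability constant of the nodal interpolant is not uniform on any dense subspace and the limit cannot be passed.

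This obstruction is precisely why the paper's proof is split in two parts. It first builds a Cl\'ement-type quasi-interpolant $\b w_c$ whose skeletal DoFs are local \emph{averages} over vertex patches (hence bounded functionals on $\b W^{1,r}$), proving the stability and approximation bound \eqref{clement:main}; it then defines a second operator $\widehat\Pi$ using only integral DoFs (edge-normal means and the $\mathbf{D_{\boldsymbol{U}}4}$ moments), which enforces the commuting property and is controlled as in \eqref{pirupiru}, and finally sets $\Pi^{\cal F}\b w=\b w_c+\widehat\Pi(\b w-\b w_c)$. The composition is stable because $\widehat\Pi$ is only ever applied to the residual $\b w-\b w_c$, whose scaled $\b L^r$ norm is already of order $h_E|\b w|_{\b W^{1,r}(\omega_E)}$ by \eqref{clement:main}. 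To repair your argument you would need to replace the nodal interpolant by such an averaged/two-step construction; the chain of inverse estimates you describe (Lemmas~\ref{Lem:vemstab-prel}--\ref{Lem:vemstab-2} and \eqref{eq:inverse}) is then exactly what is used to control the virtual functions $\b w_c$ and $\widehat\Pi(\b w-\b w_c)$ through their DoF vectors, but applied to quantities that are genuinely bounded on $\b W^{1,r}$.
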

\begin{proof} Due to \eqref{eq:inf_sup}, it is sufficient to show the existence of a Fortin operator, see for example \cite{boffi-brezzi-fortin:book}. The proof is divided into two parts. 

\smallskip\noindent
\emph{Part 1.}
We start by introducing a suitable lowest-order Cl\'ement type interpolant in $\VDG$ which, given any 
$\ww \in \b{W}^{1,r}_{0,\Gamma_D}(\Omega)$, we will denote by $\ww_c$. Given any vertex $\nu$ of the mesh, we denote by $\omega_\nu$ the set given by the union of all elements in $\Omega_h$ sharing $\nu$ as a vertex. Given $E \in \Omega_h$, we denote by $\omega_E$ the union of all the $\omega_\nu$ for $\nu$ vertexes of $E$.
We will prove that, for any $E \in \Omega_h$ and $\ww \in \b{W}^{1,r}_{0,\Gamma_D}(\Omega)$, the quasi-interpolant satisfies
\begin{equation}\label{clement:main}
\| \ww - \ww_c \|_{\b L^r(E)} + h_E | \ww - \ww_c |_{\b W^{1,r}(E)} \lesssim h_E | \ww |_{\b W^{1,r}(\omega_E)} .
\end{equation}
We start by defining $\ww_c \in \VDG$ through its DoF values as follows. We initially impose
$$
\ww_c (\nu) = \frac{1}{|\omega_\nu|} \int_{\omega_\nu} \ww \, \textrm{d} \omega_\nu
\qquad \textrm{for any } \nu \textrm{ vertex of } \Omega_h \textrm{ not in } \Gamma_D .
$$
The remaining skeletal DoFs, which are the point-wise evaluations on edges, are simply set by linearly interpolating the corresponding vertex values of each edge.
{
We further set the DoFs $\mathbf{D_{\boldsymbol{U}}3}$ as follows
$$
\frac{1}{|E|}\int_E  \b w_c \cdot \boldsymbol{m}^{\perp} m_{\boldsymbol{\alpha}} \, {\rm d}E  
:=
\frac{1}{|E|}\int_E  \b w \cdot \boldsymbol{m}^{\perp} m_{\boldsymbol{\alpha}} \, {\rm d}E
\qquad 
\text{for any $m_{\boldsymbol{\alpha}} \in \M_{k-3}(E)$,}
$$
and we enforce all degrees of freedom $\mathbf{D_{\boldsymbol{U}}4}$ of $\b w_c$ equal to zero.

It is clear that the above operator preserves constants, in the sense that if $E \in {\Omega_h}$ is an element without vertexes on $\Gamma_D$, then if $\ww|_{\omega_E} = {\bf p}_0 \in [\Pk_0(\omega_E)]^2$ it will hold $\ww_{c|E} = {\bf p}_0$. Furthermore, from the DoFs definition and employing \eqref{eq:per-infsup}, it is easy to check  that for any $E \in {\Omega_h}$
\begin{equation}\label{L:first}
| \DOF (\ww_c) | \lesssim \max_{\nu \in \partial E \textrm{ vertex}} | \ww_c (\nu) | + h_E^{-2/r} \Vert \ww_c \Vert_{\b L^r(E)}
\,.
\end{equation}
We now start from a well known (scaled) Poincar\`e type inequality (the uniformity of the involved constant on $E$ following from Assumption~\ref{ass:mesh}) and afterwards apply Lemma \ref{Lem:vemstab-1} and a trivial calculation
$$
\| \ww_c \|_{\b L^r(E)} \lesssim h_E | \ww_c |_{\b W^{1,r}(E)} + h_E^{2/r - 1} \left| \int_{\partial E} \ww_c \right|
\lesssim h_E S^E(\ww_c,\ww_c)^{1/r} + h_E^{2/r} \max_{\nu \in \partial E \textrm{ vertex}} | \ww_c (\nu) | .
$$
From the above bound, since the maximum of all the vertex value norms is clearly bounded by 
$\vert \DOF (\ww_c) \vert$, using Lemma \ref{Lem:vemstab-prel} we obtain
\begin{equation}\label{L:second}
\| \ww_c \|_{\b L^r(E)} \lesssim h_E S^E(\ww_c,\ww_c)^{1/r}  + 
h_E^{2/r} h_E^{(r-2)/r} S^E(\ww_c,\ww_c)^{1/r} \lesssim h_E S^E(\ww_c,\ww_c)^{1/r}  .
\end{equation}
To avoid repetition of similar ideas, we will prove only the bound for the first term in the left-hand side of \eqref{clement:main}. Furthermore, we will consider only the case $\overline{E} \cap \Gamma_D = \emptyset$, the other one following very similarly but using a classical Poincar\`e inequality instead of the constant preserving property. By a triangle inequality and recalling that the quasi-interpolant preserves constants (in the sense detailed above), for any ${\bf p}_0 \in [\Pk_0(\Omega)]^2$ we have
\begin{equation}\label{clem:tria}
\| \ww - \ww_c \|_{\b L^r(E)} \le \| \ww - {\bf p}_0 \|_{\b L^r(E)} + \| (\ww - {\bf p}_0)_c \|_{\b L^r(E)} .
\end{equation}
For the second term, we first apply \eqref{L:second}, then Lemma \ref{Lem:vemstab-prel} and finally 
\eqref{L:first}:
$$
\begin{aligned}
\| (\ww - {\bf p}_0)_c \|_{\b L^r(E)} & \lesssim 
h_E S^E( (\ww - {\bf p}_0)_c , (\ww - {\bf p}_0)_c )^{1/r}
\lesssim
h_E^{1 + (2-r)/r} \max_{1 \le i \le \NE} | \DOFi ((\ww - {\bf p}_0)_c) | \\
& \lesssim
h_E^{2/r} \max_{\nu \in \partial E \textrm{ vertex}} | (\ww - {\bf p}_0)_c (\nu) | +  \Vert \b w - {\bf p}_0\Vert_{\b L^r(E)}
\,.
\end{aligned}
$$
By definition of the quasi-interpolant and due to the shape regularity of the mesh (yielding $|\omega_\nu| \simeq h_E^2$ for any $\nu$ vertex of $E$) we obtain from above
$$
\| (\ww - {\bf p}_0)_c \|_{\b L^r(E)} \lesssim 
\max_{\nu \in \partial E \textrm{ vertex}} 
|\omega_\nu|^{1/r}
\left| {\frac{1}{|\omega_\nu|} \int_{\omega_\nu} (\ww - {\bf p}_0) }\right| +  \Vert \b w - {\bf p}_0\Vert_{\b L^r(E)} \,.
$$
Using an Holder inequality $(r,r')$ we obtain
$$
\| (\ww - {\bf p}_0)_c \|_{\b L^r(E)} \lesssim \max_{\nu \in \partial E \textrm{ vertex}} 
\| \ww - {\bf p}_0 \|_{\b L^r(\omega_\nu)}  +  \Vert \b w - {\bf p}_0\Vert_{\b L^r(E)} \le \| \ww - {\bf p}_0 \|_{\b L^r(\omega_E)} .
$$
Combining the above bound with \eqref{clem:tria} and standard polynomial approximation estimates we finally obtain the desired bound
$$
\| \ww - \ww_c \|_{\b L^r(E)} \lesssim h_E  \, | \ww |_{\b W^{1,r}(\omega_E)} .
$$
}
\noindent  
\emph{Part 2.} 
We define a preliminary Fortin operator $\widehat\Pi : \b W^{1,r}_{0,\Gamma_D}(\Omega) \rightarrow \VDG$ by setting its DoF values as follows. For any $\ww \in \b W^{1,r}_{0,\Gamma_D}(\Omega)$, we set to zero all DOFs of type $\mathbf{D_{\boldsymbol{U}}1}$, $\mathbf{D_{\boldsymbol{U}}3}$ and of type $\mathbf{D_{\boldsymbol{U}}2'}$ (cf. \eqref{eq:per-infsup2}) apart from   
$$
\frac{1}{|e|}\int_e \widehat\Pi \ww \cdot {\bf n}_e  = \frac{1}{|e|} \int_e \ww \cdot {\bf n}_e \,,
$$
on each edge $e$ (not in $\Gamma_D$).
We set the DoFs $\mathbf{D_{\boldsymbol{U}}4}$ as 
$$
\frac{h_E}{|E|}\int_E (\divs \widehat\Pi \ww) \, m_{\boldsymbol{\alpha}} \, {\rm d}E  
:= \frac{h_E}{|E|}\int_E (\divs \ww) \, m_{\boldsymbol{\alpha}} \, {\rm d}E
\quad  \text{for all $m_{\boldsymbol{\alpha}} \in \M_{k-1}(E)$ with $|\boldsymbol{\alpha}| > 0$.}
$$
It is trivial to check that the above operator satisfies the first Fortin condition by construction, that is 
$b(\widehat\Pi \ww,q_h) = b(\ww,q_h)$ for all $q_h \in \QDG$. Furthermore, it also easily follows from the above definition
$$
\vert \DOF (\widehat\Pi \ww) \vert \lesssim 
\frac{h_E}{|E|} \max_{\text{ $m_{\boldsymbol{\alpha}} \in \M_{k-1}(E)$}} 
\left|\int_E (\divs \ww) \, m_{\boldsymbol{\alpha}} \, {\rm d}E \right|
\ + \ \max_{e \in \partial E} \frac{1}{|e|} \left| \int_e \ww \cdot {\bf n}_e \, \textrm{d} \, s \right| 
$$
which combined with a scaled trace inequality and a Holder inequality $(r,r')$ yields
\begin{equation}\label{pirupiru}
\vert \DOF (\widehat\Pi \ww) \vert \lesssim h_E^{- 2/r} 
\Big( \| \ww \|_{\b L^{r}(E)} + h_E |\ww|_{\b W^{1,r}(E)} \Big) .
\end{equation}
We can now define the Fortin operator
$\Pi^{\cal F} : \b W^{1,r}_{0,\Gamma_D}(\Omega) \rightarrow \VDG$.
We set
$$
\Pi^{\cal F} \ww = \ww_c + \widehat\Pi (\ww - \ww_c) 
\qquad \forall \ \ww \in \b W^{1,r}_{0,\Gamma_D}(\Omega) .
$$
It is immediate to check that also $\Pi^{\cal F}$ satisfies the first Fortin condition, that is 
$b(\Pi^{\cal F} \ww,q_h) = b(\ww,q_h)$ for all $q_h \in \QDG$. In order to conclude, we are left to check the second condition for a Fortin operator, which is its continuity in the $\b W^{1,r}(\Omega)$ norm (see, eg. \cite{boffi-brezzi-fortin:book}).

By the triangle inequality and recalling \eqref{clement:main}, for all $E \in \Omega_h$
\begin{equation}\label{triaF}
| \Pi^{\cal F} \ww |_{\b W^{1,r}(E)} \lesssim |\ww|_{\b W^{1,r}(\omega_E)} 
+  |\widehat\Pi (\ww - \ww_c)|_{\b W^{1,r}(E)} \,.
\end{equation}
For the second term on the right hand side, we first apply again Lemma \ref{Lem:vemstab-prel} and \ref{Lem:vemstab-1}, then recall \eqref{pirupiru} and finally make use of \eqref{clement:main}, yielding
$$
\begin{aligned}
|\widehat\Pi (\ww - \ww_c)|_{\b W^{1,r}(E)} 
& \lesssim h_E^{(2-r)/r} \vert \DOF (\widehat\Pi (\ww - \ww_c)) \vert \\
& \lesssim h_E^{-1} \| \ww - \ww_c \|_{\b L^{r}(E)} 
+ | \ww - \ww_c |_{\b W^{1,r}(E)} 
\lesssim  | \ww |_{\b W^{1,r}(\omega_E)} .
\end{aligned}
$$
The continuity in $\b W^{1,r}(\Omega)$ of $\Pi^{\cal F}$ follows trivially from  \eqref{triaF} and the bound here above, summing on all elements $E \in \Omega_h$ and noting that, due to Assumption \ref{ass:mesh}, 
$\{ \omega_E \}_{E \in \Omega_h}$ overlaps every element a uniformly bounded number of times. 

\end{proof}

\subsection{Virtual Element problem}
\label{sub:vem problem}

Having in mind the spaces \eqref{eq:v-glo} and \eqref{eq:q-glo}, the discrete form  \eqref{eq:forma ah}, the form \eqref{eq:a.b}, the discrete loading term \eqref{eq:forma fh}, the virtual element discretization of Problem \eqref{eq:stokes.weak} is given by:
Find $(\b u_h, p_h) \in \VDG \times \QDG$ such that
\begin{equation}\label{eq:stokes.vem}
  \begin{aligned}
     a_h(\b u_h,\b v_h)+b(\b v_h, p_h) &= \int_\Omega \b f_h \cdot \b v_h + \int_{\Gamma_N} \b g \cdot \b\gamma(\b v_h)&\qquad \forall \b v_h \in \VDG , \\
     b(\b u_h, q_h) &= 0 &\qquad \forall q_h \in \QDG \,. 
  \end{aligned}
\end{equation}
Recalling the definition of the discrete kernel $\ZDG$ in \eqref{eq:z-glo}, the previous problem can be also written in the kernel formulation:
Find $\b u_h \in \b \ZDG$ such that
\begin{equation}\label{eq:stokes.vem.Z}
     a_h(\b u_h, \b v_h) = \int_\Omega \b f_h \cdot \b v_h + \int_{\Gamma_N} \b g \cdot \b\gamma(\b v_h) \qquad \forall \b v_h \in \b \ZDG \,.
\end{equation}

In order to prove the well-posedness of the discrete problem \eqref{eq:stokes.vem}, we establish the strong monotonicity and the H\"older continuity of the non-linear function $a_h(\cdot, \cdot)$.

\begin{lemma}[Strong monotonicity of $a_h(\cdot,\cdot)$] 
\label{lem:mono_ah}
Let $\b u_h$, $\b w_h \in \VDG$ and set $\b e_h:=\b u_h - \b w_h \in \VDG$. Then there holds
\begin{equation*}
\begin{aligned}
    a_h(\b u_h, \b e_h) - a_h(\b w_h, \b e_h) & \gtrsim
    \left(\|\b u_h\|^r_{\b W^{1,r}(\Omega)} +
    \| \b w_h\|^r_{\b W^{1,r}(\Omega)} \right)^{\frac{r-2}{r}} \, \Vert  \b e_h \Vert^2_{\b W^{1,r}(\Omega)}\,.
\end{aligned}
\end{equation*}
\end{lemma}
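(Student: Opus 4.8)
The plan is to split $a_h(\cdot,\cdot)$ into its polynomial (consistency) part and its stabilization part, treat each separately, and sum. Write $a_h = a_h^{\mathrm{poly}} + S$ where $a_h^{\mathrm{poly}}(\b v_h,\b w_h) := \int_\Omega \b\sigma(\cdot,\b\Pi^0_{k-1}\b\epsilon(\b v_h)):\b\Pi^0_{k-1}\b\epsilon(\b w_h)$. Since monotonicity is stable under addition, it suffices to bound each piece from below by a quantity of the stated form and then combine. For the polynomial part, I would mimic the proof of \eqref{eq:a.monotonicity}: apply the pointwise strong monotonicity \eqref{eq:hypo.monotonicity} (with $\delta=0$, per Remark~\ref{rem:deltazero}) to the tensors $\b\tau=\b\Pi^0_{k-1}\b\epsilon(\b u_h)$, $\b\eta=\b\Pi^0_{k-1}\b\epsilon(\b w_h)$, then use the H\"older inequality with exponents $(\tfrac{2}{2-r},\tfrac{2}{r})$ exactly as in the continuous lemma to pull out the factor $\big(\|\b\Pi^0_{k-1}\b\epsilon(\b u_h)\|_{\mathbb{L}^r(\Omega)}^r+\|\b\Pi^0_{k-1}\b\epsilon(\b w_h)\|_{\mathbb{L}^r(\Omega)}^r\big)^{(r-2)/r}$, arriving at a lower bound proportional to $\big(\dots\big)^{(r-2)/r}\|\b\Pi^0_{k-1}\b\epsilon(\b e_h)\|_{\mathbb{L}^r(\Omega)}^2$. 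For the stabilization part I would invoke the global strong monotonicity already proved, namely \eqref{eq:monotonicity:global_stab2}, applied with $\b u_h,\b w_h$ replaced by $(I-\Pi^0_k)\b u_h$, $(I-\Pi^0_k)\b w_h$ (these automatically satisfy the hypothesis $\Pi^0_k(\cdot)=\b 0$), which gives a lower bound of the form $\big(\|\b\epsilon((I-\Pi^0_k)\b u_h)\|_{\mathbb{L}^r(\Omega_h)}^r+\|\b\epsilon((I-\Pi^0_k)\b w_h)\|_{\mathbb{L}^r(\Omega_h)}^r\big)^{(r-2)/r}\,\|\b\epsilon((I-\Pi^0_k)\b e_h)\|_{\mathbb{L}^r(\Omega_h)}^2$.

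The key technical step is then to convert both lower bounds into ones involving the full $\b W^{1,r}(\Omega)$ norms of $\b u_h,\b w_h,\b e_h$. For the ``denominator'' factors (the terms raised to the negative power $(r-2)/r$), I need upper bounds: $\|\b\Pi^0_{k-1}\b\epsilon(\b v_h)\|_{\mathbb{L}^r(\Omega)}\lesssim \|\b\epsilon(\b v_h)\|_{\mathbb{L}^r(\Omega)}\lesssim\|\b v_h\|_{\b W^{1,r}(\Omega)}$, using $L^r$-stability of the $L^2$-projection on each element plus mesh regularity, and likewise $\|\b\epsilon((I-\Pi^0_k)\b v_h)\|_{\mathbb{L}^r(\Omega_h)}\lesssim\|\b v_h\|_{\b W^{1,r}(\Omega)}$; since the exponent $(r-2)/r<0$, these upper bounds yield the desired lower bounds on the factors. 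For the ``numerator'' factors (the squared error terms), I need lower bounds — this is where the two pieces must be played against each other. The sum
$$
\|\b\Pi^0_{k-1}\b\epsilon(\b e_h)\|_{\mathbb{L}^r(\Omega)}^2 + \|\b\epsilon((I-\Pi^0_k)\b e_h)\|_{\mathbb{L}^r(\Omega_h)}^2 \gtrsim \|\b\epsilon(\b e_h)\|_{\mathbb{L}^r(\Omega_h)}^2 \gtrsim \|\b e_h\|_{\b W^{1,r}(\Omega)}^2,
$$
where the first inequality follows from a norm-equivalence/triangle argument (controlling $\b\epsilon(\b e_h)$ by $\b\Pi^0_{k-1}\b\epsilon(\b e_h)$ plus $\b\epsilon((I-\Pi^0_k)\b e_h)$ up to projection-commutation terms that are themselves controlled, using that $\b\Pi^0_{k-1}\b\epsilon(\Pi^0_k\b v_h)$ behaves well) together with the elementary inequality $a^2+b^2\gtrsim(a+b)^2$ when comparing sums, and the second is Korn's inequality \eqref{eq:Korn}.

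The main obstacle I anticipate is the bookkeeping needed to show that the two numerator contributions, living on different projections of $\b e_h$, really do add up to control $\|\b\epsilon(\b e_h)\|_{\mathbb{L}^r}$ from below — because strong monotonicity as written has the weighting factor attached, one cannot simply add the two inequalities; rather one must first choose the common weight (e.g.\ $\big(\|\b u_h\|^r_{\b W^{1,r}}+\|\b w_h\|^r_{\b W^{1,r}}\big)^{(r-2)/r}$, which by the upper bounds above is $\lesssim$ both elemental weights since the power is negative) to factor it out of both terms simultaneously, and only then invoke $a^2+b^2\gtrsim(a+b)^2$ and Korn. A secondary subtlety is verifying that $\b v_h$ with zero global $L^2$-projection $\Pi^0_k\b v_h=\b 0$ is admissible in \eqref{eq:monotonicity:global_stab2}; this is immediate by construction since $(I-\Pi^0_k)\b v_h$ has vanishing $\Pi^0_k$. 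Once the weight is fixed and factored, the rest is routine combination of the continuity of projections, Korn's inequality, and the elementary convexity inequality, and the stated bound follows by relabeling constants.
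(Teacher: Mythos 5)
Your proposal follows essentially the same route as the paper's proof: the same splitting $a_h = a_h^{\mathrm{poly}} + S$, the same reuse of the continuous monotonicity argument (with $\delta=0$) for the projected consistency term, the same invocation of \eqref{eq:monotonicity:global_stab2} for the stabilization, the same use of $L^r$-stability of the projections to pass to a common weight raised to the negative power $(r-2)/r$, and the same final recombination of $\|\b\Pi^0_{k-1}\b\epsilon(\b e_h)\|^2_{\mathbb{L}^r(\Omega)}$ and $\|(I-\b\Pi^0_{k-1})\b\epsilon(\b e_h)\|^2_{\mathbb{L}^r(\Omega)}$ via the triangle inequality and Korn's inequality \eqref{eq:Korn}. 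The subtleties you flag (factoring out the common weight before summing, and the projection-commutation step relating $\b\epsilon((I-\Pi^0_k)\b e_h)$ to $(I-\b\Pi^0_{k-1})\b\epsilon(\b e_h)$) are exactly the ones the paper handles, so the argument is complete.
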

\begin{proof}
Recalling the definition of $a_h(\cdot, \cdot)$ in \eqref{eq:forma ah} and definition \eqref{eq:Sglobal} we have
\begin{equation}
    \label{eq:lemma15-1}
    a_h(\b u_h, \b e_h) - a_h(\b w_h, \b e_h) := T_1 + T_2 
\end{equation}
where
$$
    \begin{aligned}
    T_1 &: =
    \int_\Omega \b\sigma(\cdot, \b\Pi^0_{k-1} \b \epsilon(\b u_h)) : \b\Pi^0_{k-1} \b\epsilon(\b e_h) - 
\int_\Omega \b\sigma(\cdot, \b\Pi^0_{k-1} \b \epsilon(\b w_h)) : \b\Pi^0_{k-1} \b\epsilon(\b e_h)  \,,
\\
T_2 &:= S((I - \Pi^0_k) \b u_h, (I -  \Pi^0_k) \b e_h) -
    S((I -  \Pi^0_k) \b w_h, (I -  \Pi^0_k) \b e_h) \,.
    \end{aligned}
$$
We estimate separately each term above.
The same identical proof used to derive \eqref{eq:a.monotonicity} (with $\delta=0$) and the continuity of the $L^2$-projection with respect to the $L^r$-norm yield
\begin{equation}
\label{eq:lemma15-2}
\begin{aligned}
T_1
&\gtrsim \sigma_{\rm m} 
\left(\norm[\mathbb{L}^{r}(\Omega)]{\b\Pi^0_{k-1} \b \epsilon (\b u_h)}^r 
+ \norm[\mathbb{L}^{r}(\Omega)]{\b\Pi^0_{k-1} \b \epsilon (\b w_h)}^r\right)^{\frac{r-2}r} 
\norm[\mathbb{L}^{r}(\Omega)]{\b\Pi^0_{k-1} \b \epsilon (\b e_h)}^2 
\\
&
\gtrsim \sigma_{\rm m} 
\left(\norm[\mathbb{L}^{r}(\Omega)]{ \b \epsilon (\b u_h)}^r 
+ \norm[\mathbb{L}^{r}(\Omega)]{\b \epsilon (\b w_h)}^r\right)^{\frac{r-2}r} 
\norm[\mathbb{L}^{r}(\Omega)]{\b\Pi^0_{k-1} \b \epsilon (\b e_h)}^2
\\
&
\gtrsim \sigma_{\rm m} 
\left(\norm[\b W^{1, r}(\Omega)]{ \b u_h}^r 
+ \norm[\b W^{1, r}(\Omega)]{ \b w_h}^r \right)^{\frac{r-2}r} 
\norm[\mathbb{L}^{r}(\Omega)]{\b\Pi^0_{k-1} \b \epsilon (\b e_h)}^2 \,.
\end{aligned}
\end{equation}
Employing bound \eqref{eq:monotonicity:global_stab2} and   the properties of the  $L^2$-projection   we infer
\begin{equation}
\label{eq:lemma15-3}
\begin{aligned}
T_2 &\gtrsim 
    \left(  
    \Vert \b \epsilon((I - \Pi^0_k)\b u_h) \Vert^r_{\mathbb{L}^r(\Omega_h)} + 
    \Vert \b \epsilon((I -  \Pi^0_k)\b w_h) \Vert^r_{\b L^r(\Omega_h)}
    \right)^{\frac{r-2}{r}}
    \Vert \b \epsilon((I -  \Pi^0_k)\b e_h) \Vert^2_{\mathbb{L}^r(\Omega_h)}
    \\
    & \ge
    \left(  
    \vert (I -  \Pi^0_k) \b u_h \vert^r_{\b W^{1,r}(\Omega_h)} + 
    \vert (I -  \Pi^0_k)\b w_h \vert^r_{\b W^{1,r}(\Omega_h)}
    \right)^{\frac{r-2}{r}}
    \Vert (I -  \b \Pi^0_{k-1})\b \epsilon(\b e_h) \Vert^2_{\mathbb{L}^r(\Omega)}
    \\
    &\gtrsim 
    \left(  
    \Vert  \b u_h \Vert^r_{\b W^{1,r}(\Omega_h)} + 
    \Vert  \b w_h \Vert^r_{\b W^{1,r}(\Omega_h)}
    \right)^{\frac{r-2}{r}}
    \Vert (I - \b \Pi^0_{k-1})\b \epsilon(\b e_h) \Vert^2_{\mathbb{L}^r(\Omega)}  \,.
\end{aligned}    
\end{equation}
The proof follows collecting \eqref{eq:lemma15-2} and \eqref{eq:lemma15-3} in \eqref{eq:lemma15-1} and observing that by Korn inequality \eqref{eq:Korn} we have
$$
\Vert \b \Pi^0_{k-1}\b \epsilon(\b e_h) \Vert^2_{\mathbb{L}^r(\Omega)} +
\Vert (I - \b \Pi^0_{k-1})\b \epsilon(\b e_h) \Vert^2_{\mathbb{L}^r(\Omega)} 
\gtrsim
\Vert  \b \epsilon(\b e_h) \Vert^2_{\mathbb{L}^r(\Omega)}
\gtrsim
\Vert  \b e_h \Vert^2_{\b W^{1,r}(\Omega)} \,.
$$
\end{proof}
\begin{lemma}[H\"older continuity  of $a_h(\cdot,\cdot)$] 
\label{lem:holder_ah}
Let $\b u_h$, $\b w_h \in \VDG$ and set $\b e_h:=\b u_h - \b w_h \in \VDG$. Then for any $\b v_h \in \VDG$ there holds
\begin{equation*}
\begin{aligned}
    |a_h(\b u_h, \b v_h) - a_h(\b w_h, \b v_h)| & \lesssim
    \|\b e_h\|^{r-1}_{\b W^{1,r}(\Omega)} 
    \| \b v_h\|_{\b W^{1,r}(\Omega)} \,.
\end{aligned}
\end{equation*}
\end{lemma}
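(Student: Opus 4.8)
The plan is to split $a_h(\b u_h,\b v_h)-a_h(\b w_h,\b v_h)$ into the polynomial-projection contribution $T_1$ and the stabilization contribution $T_2$, exactly as in the proof of Lemma~\ref{lem:mono_ah}, and bound each separately. For $T_2$ the work is already done: apply the global H\"older continuity of the stabilization, i.e. bound \eqref{eq:continuity:local_stabG2} in Lemma~\ref{Lemma:S_holderG}, to the arguments $(I-\Pi^0_k)\b u_h$, $(I-\Pi^0_k)\b w_h$, $(I-\Pi^0_k)\b v_h$, which all have vanishing $\Pi^0_k$ by construction. Since $\Pi^0_k(I-\Pi^0_k)=0$, the hypothesis of \eqref{eq:continuity:local_stabG2} is met, giving $|T_2|\lesssim \|(I-\Pi^0_k)\b e_h\|_{\b W^{1,r}(\Omega_h)}^{r-1}\|(I-\Pi^0_k)\b v_h\|_{\b W^{1,r}(\Omega_h)}$, and the continuity of the $L^2$-projection in the $L^r$ seminorm together with Korn's inequality \eqref{eq:Korn} absorbs this into $\|\b e_h\|_{\b W^{1,r}(\Omega)}^{r-1}\|\b v_h\|_{\b W^{1,r}(\Omega)}$.

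For $T_1=\int_\Omega \bigl(\b\sigma(\cdot,\b\Pi^0_{k-1}\b\epsilon(\b u_h))-\b\sigma(\cdot,\b\Pi^0_{k-1}\b\epsilon(\b w_h))\bigr):\b\Pi^0_{k-1}\b\epsilon(\b v_h)$, I would reproduce verbatim the chain of inequalities \eqref{eq:for-holder} used to establish the H\"older continuity \eqref{eq:a.continuity} of the continuous form $a$. Concretely: bound the integrand pointwise using \eqref{eq:hypo.continuity}, then use \eqref{eq:pre_holdercont} to replace the factor $(\delta^r+|\cdot|^r+|\cdot|^r)^{(r-2)/r}$ (with $\delta=0$, per Remark~\ref{rem:deltazero}) by $|\b\Pi^0_{k-1}\b\epsilon(\b e_h)|^{r-2}$, then apply H\"older's inequality with exponents $(r',r)$ to get $|T_1|\lesssim \|\b\Pi^0_{k-1}\b\epsilon(\b e_h)\|_{\mathbb L^r(\Omega)}^{r-1}\|\b\Pi^0_{k-1}\b\epsilon(\b v_h)\|_{\mathbb L^r(\Omega)}$. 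Finally invoke the $L^r$-stability of $\b\Pi^0_{k-1}$ (already used in \eqref{eq:lemma15-2}) to pass to $\|\b\epsilon(\b e_h)\|_{\mathbb L^r(\Omega)}^{r-1}\|\b\epsilon(\b v_h)\|_{\mathbb L^r(\Omega)}$ and bound these by the full $\b W^{1,r}$ norms.

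Combining the two estimates through the triangle inequality $|a_h(\b u_h,\b v_h)-a_h(\b w_h,\b v_h)|\le |T_1|+|T_2|$ yields the claimed bound. I do not expect any serious obstacle here: the lemma is essentially an assembly of results already proven in the excerpt (the continuity argument for the continuous form, the $L^r$-projection stability, Korn's inequality, and Lemma~\ref{Lemma:S_holderG}). The one point requiring a small amount of care is checking that the $\Pi^0_k$-orthogonality hypothesis in \eqref{eq:continuity:local_stabG2} genuinely applies to $(I-\Pi^0_k)\b u_h$ etc., which is immediate from idempotency of the projection; the rest is bookkeeping. Exactly as in Lemma~\ref{lem:mono_ah}, one also remarks that the argument is written for $S^E_1$ but, by the equivalence in Lemma~\ref{Lem:vemstab-prel}, carries over to $S^E_2$.
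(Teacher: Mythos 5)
Your proof follows the paper's argument essentially verbatim: the same splitting into the projected consistency term $T_1$ and the stabilization term $T_2$, with $T_1$ handled by repeating the chain \eqref{eq:for-holder} plus the $L^r$-stability of $\b\Pi^0_{k-1}$, and $T_2$ handled by \eqref{eq:continuity:local_stabG2} together with the $\b W^{1,r}$-stability of $(I-\Pi^0_k)$. The additional care you take in verifying the $\Pi^0_k$-orthogonality hypothesis via idempotency is correct and matches what the paper leaves implicit.
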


\begin{proof}
Recalling the definition of $a_h(\cdot, \cdot)$ in \eqref{eq:forma ah} and definition \eqref{eq:Sglobal} we have
\begin{equation}
    \label{eq:lemmaholdh-0}
    |a_h(\b u_h, \b v_h) - a_h(\b w_h, \b v_h)| \le |T_1| + |T_2| 
\end{equation}
where
$$
    \begin{aligned}
    T_1 &: =
    \int_\Omega \b\sigma(\cdot, \b\Pi^0_{k-1} \b \epsilon(\b u_h)) : \b\Pi^0_{k-1} \b\epsilon(\b v_h) - 
\int_\Omega \b\sigma(\cdot, \b\Pi^0_{k-1} \b \epsilon(\b w_h)) : \b\Pi^0_{k-1} \b\epsilon(\b v_h)  \,,
\\
T_2 &:= S((I - \Pi^0_k) \b u_h, (I -  \Pi^0_k) \b v_h) -
    S((I -  \Pi^0_k) \b w_h, (I -  \Pi^0_k) \b v_h) \,.
    \end{aligned}
$$    
Using the same computations as in \eqref{eq:for-holder} and  the continuity of the $L^2$-projection w.r.t. the $L^2$-norm we have
\begin{equation}
    \label{eq:lemmaholdh-1}
    |T_1| \lesssim \|\b e_h\|^{r-1}_{\b W^{1,r}(\Omega)} 
    \| \b v_h\|_{\b W^{1,r}(\Omega)} \,.
\end{equation}
From \eqref{eq:continuity:local_stabG2} and Lemma \ref{lem:approx-interp} we infer
\begin{equation}
    \label{eq:lemmaholdh-2}
    |T_2| \lesssim \|(I - \Pi^0_k) \b e_h\|^{r-1}_{\b W^{1,r}(\Omega)} 
    \| (I - \Pi^0_k) \b  v_h\|_{\b W^{1,r}(\Omega)}
    \lesssim \|\b e_h\|^{r-1}_{\b W^{1,r}(\Omega)} 
    \| \b  v_h\|_{\b W^{1,r}(\Omega)}\,.
\end{equation}
The proof follows inserting \eqref{eq:lemmaholdh-1} and \eqref{eq:lemmaholdh-2} in \eqref{eq:lemmaholdh-0}.
\end{proof}
\begin{theorem}[Well-posedness of \eqref{eq:stokes.vem.Z}]
    For any $r\in(1,2)$, there exists a unique solution $\b u_h \in \ZDG$ to the discrete problem \eqref{eq:stokes.vem.Z} satisfying the a-priori estimate (cf. \eqref{eq:Cfg})
\begin{equation}
\label{eq:a-priori.disc.u}
    \norm[\b{W}^{1,r}(\Omega)]{\b u_h}\lesssim
\mathcal{N}(\b f, \b g)^{\frac{1}{r-1}}  \,.
\end{equation}

\end{theorem}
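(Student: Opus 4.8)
The plan is to apply the standard theory of strongly monotone, continuous operators on reflexive Banach spaces (the nonlinear Browder--Minty / Lax--Milgram-type argument), working in the closed subspace $\ZDG\subset\b W^{1,r}_{0,\Gamma_D}(\Omega)$. First I would observe that problem \eqref{eq:stokes.vem.Z} can be rephrased as $\mathcal{A}_h(\b u_h)=\ell$ in $(\ZDG)'$, where $\langle\mathcal{A}_h(\b w_h),\b v_h\rangle:=a_h(\b w_h,\b v_h)$ and $\ell(\b v_h):=\int_\Omega\b f_h\cdot\b v_h+\int_{\Gamma_N}\b g\cdot\b\gamma(\b v_h)$. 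Since $\ZDG$ is finite dimensional, existence and uniqueness follow from the strict monotonicity of $\mathcal{A}_h$ (which gives injectivity directly) together with a coercivity/continuity argument (e.g. a corollary of Brouwer's fixed point theorem applied to $\mathcal{A}_h$, using coercivity to get surjectivity onto the finite-dimensional dual). Strict monotonicity is immediate from Lemma~\ref{lem:mono_ah} taking $\b e_h:=\b u_h-\b w_h$: when $\b u_h\neq\b w_h$ the right-hand side is strictly positive (note $r-2<0$ but the prefactor $(\|\b u_h\|^r+\|\b w_h\|^r)^{(r-2)/r}$ is finite and positive whenever not both arguments vanish, and one handles the degenerate case $\b u_h=\b w_h=\b 0$ separately). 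Continuity of $\mathcal{A}_h$ in the norm topology follows from Lemma~\ref{lem:holder_ah}.

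Next I would establish coercivity of $a_h$ to get existence and the a-priori bound simultaneously. Applying Lemma~\ref{lem:mono_ah} with $\b w_h=\b 0$ (so $\b e_h=\b u_h$) and using $a_h(\b 0,\b u_h)=0$ since $\b\sigma(\cdot,\b 0)=\b 0$ and the stabilization vanishes at zero, we obtain
\begin{equation*}
a_h(\b u_h,\b u_h)\gtrsim \|\b u_h\|_{\b W^{1,r}(\Omega)}^{r-2}\,\|\b u_h\|_{\b W^{1,r}(\Omega)}^2=\|\b u_h\|_{\b W^{1,r}(\Omega)}^r.
\end{equation*}
Taking $\b v_h=\b u_h$ in \eqref{eq:stokes.vem.Z}, the right-hand side is bounded, via the Hölder inequality, the continuity of the trace map, and the $L^{r'}$-stability of $\Pi^0_k$ (so $\|\b f_h\|_{\b L^{r'}(\Omega)}\lesssim\|\b f\|_{\b L^{r'}(\Omega)}$), by $\lesssim(\|\b f\|_{\b L^{r'}(\Omega)}+\|\b g\|_{\b L^{r'}(\Gamma_N)})\,\|\b u_h\|_{\b W^{1,r}(\Omega)}$. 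Combining,
\begin{equation*}
\|\b u_h\|_{\b W^{1,r}(\Omega)}^r\lesssim \sigma_{\rm m}\,\mathcal{N}(\b f,\b g)\,\|\b u_h\|_{\b W^{1,r}(\Omega)},
\end{equation*}
so dividing gives $\|\b u_h\|_{\b W^{1,r}(\Omega)}^{r-1}\lesssim\mathcal{N}(\b f,\b g)$, which is exactly \eqref{eq:a-priori.disc.u}. Note that since $\delta=0$ here (Remark~\ref{rem:deltazero}), the $\delta^{2-r}\mathcal{N}(\b f,\b g)$ term appearing in the continuous estimate \eqref{eq:a-priori_cont.u} drops out, consistent with the cleaner bound.

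For existence on the finite-dimensional space $\ZDG$: identify $\ZDG\cong\mathbb{R}^N$ via the DoFs, and view $\b v_h\mapsto a_h(\b u_h,\b v_h)-\ell(\b v_h)$ as a continuous map $F:\mathbb{R}^N\to\mathbb{R}^N$ (continuity from Lemma~\ref{lem:holder_ah}). By the coercivity bound above, $F(\b u_h)\cdot\b u_h>0$ whenever $\|\b u_h\|_{\b W^{1,r}(\Omega)}$ exceeds a threshold $R\sim\mathcal{N}(\b f,\b g)^{1/(r-1)}$; a standard corollary of Brouwer's theorem then yields a zero of $F$ in the ball of radius $R$, giving existence together with the bound. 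Uniqueness follows from strict monotonicity (Lemma~\ref{lem:mono_ah}): if $\b u_h,\b w_h$ are two solutions, subtracting the equations and testing with $\b e_h=\b u_h-\b w_h$ gives $0=a_h(\b u_h,\b e_h)-a_h(\b w_h,\b e_h)\gtrsim(\cdots)\|\b e_h\|_{\b W^{1,r}(\Omega)}^2$, forcing $\b e_h=\b 0$. The main (minor) obstacle is the careful handling of the degenerate prefactor $(\|\b u_h\|^r+\|\b w_h\|^r)^{(r-2)/r}$ with negative exponent: one must argue that along the relevant bounded sets the solutions stay in a region where this factor is bounded below away from zero, or equivalently restrict to balls of radius $R$ where $(\|\b u_h\|^r+\|\b w_h\|^r)^{(r-2)/r}\gtrsim R^{r-2}$; this is routine but should be stated explicitly to make the coercivity and uniqueness arguments rigorous. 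The inf-sup condition of Lemma~\ref{inf-sup:vem} is not needed for this theorem (it enters only when recovering the pressure $p_h$ from the kernel solution).
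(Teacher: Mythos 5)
Your proposal is correct and follows essentially the same route as the paper: strong monotonicity of $a_h(\cdot,\cdot)$ (Lemma~\ref{lem:mono_ah}) gives uniqueness and, with $\b w_h=\b 0$, the coercivity $a_h(\b u_h,\b u_h)\gtrsim\|\b u_h\|_{\b W^{1,r}(\Omega)}^r$ that yields both existence (via a finite-dimensional surjectivity argument for coercive continuous maps) and the a-priori bound by testing with $\b u_h$. The only difference is bookkeeping: you identify $\ZDG$ with $\mathbb{R}^N$ through the DoFs and invoke the Brouwer corollary directly, whereas the paper represents $a_h$ through the $\b H^1$ inner product, uses the inverse estimate \eqref{eq:inv_wellp} to transfer coercivity to the $\b H^1$ norm, and cites a surjectivity theorem for coercive operators; these are equivalent instantiations of the same argument.
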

\begin{proof}
\textit{(i) Existence.} 
Let the mesh $\Omega_h$ be fixed. We equip the space $\b \ZDG$ with the $\b H^1$-inner product, denoted by $(\cdot,\cdot)_{\b H^1}$, and corresponding induced norm $\norm[\b H^1(\Omega)]{\cdot}$. 
Owing to the Poincar\'e inequality, the equivalence of vector norms in finite-dimensional spaces, and the inverse estimate \eqref{eq:inv_norms}, we infer 
\begin{equation}\label{eq:inv_wellp}
    \norm[\b H^1(\Omega)]{\b v_h} \lesssim
    \left(\sum_{E\in\Omega_h} |\b v_h|_{\b H^1(E)}^2 \right)^\frac12 \lesssim 
    \left(\sum_{E\in\Omega_h} h_E^{r-2} |\b v_h|_{\b W^{1,r}(E)}^r \right)^\frac1r \lesssim
    \left(\min_{E\in\Omega_h} h_E\right)^{\frac{r-2}r}\norm[\b W^{1,r}(\Omega)]{\b v_h}.
\end{equation}
Let now $\b \Phi_h:\b \ZDG\to\b \ZDG$ be defined such that
$$
(\b \Phi_h(\b v_h), \b w_h)_{\b H^1(\Omega)} \coloneq a_h(\b v_h, \b w_h), 
\qquad\forall\ \b v_h, \b w_h\in \b \ZDG.
$$
Thus, the strong monotonicity of $a_h(\cdot, \cdot)$ established in Lemma \ref{lem:mono_ah} together with  the bound in \eqref{eq:inv_wellp} leads to, for any $\b v_h \in \b \ZDG$, 
$$
\begin{aligned}
\lim_{\norm[\b H^1(\Omega)]{\b v_h}\to\ \infty} 
\frac{(\b \Phi_h(\b v_h), \b v_h)_{\b H^1(\Omega)}}{\norm[\b H^1(\Omega)]{\b v_h}}
&\gtrsim
\lim_{\norm[\b H^1(\Omega)]{\b v_h}\to\ \infty} 
\frac{\norm[\b W^{1,r}(\Omega)]{\b v_h}^r}{\norm[\b H^1(\Omega)]{\b v_h}}
\\
&\gtrsim
\lim_{\norm[\b H^1(\Omega)]{\b v_h}\to\ \infty} 
\left(\min_{E\in\Omega_h} h_E\right)^{2-r}
\norm[\b H^1(\Omega)]{\b v_h}^{r-1} \to \infty.
\end{aligned}
$$
By applying \cite[Theorem 3.3]{Deimling:85}, the previous result shows that the operator $\b \Phi_h$ is surjective.
Let $\b z_h\in \b \ZDG$ be such that 
$$(\b z_h, \b w_h)_{\b H^1(\Omega)} = \int_\Omega \b f_h \cdot \b w_h + \int_{\Gamma_N} \b g\cdot \b\gamma(\b w_h).
$$
Hence, as a result of the surjectivity of $\b \Phi_h$ and the definition of $\b z_h$, there exists $\b u_h\in \b \ZDG$ such that $\b \Phi_h(\b u_h) = \b z_h$, namely $\b u_h$ is a solution to the discrete problem \eqref{eq:stokes.vem.Z}.

\medskip\noindent 
\textit{(ii) Uniqueness.}
Let $\b u_{h,1}, \b u_{h,2}\in \b \ZDG$ solve \eqref{eq:stokes.vem.Z}.
Subtracting \eqref{eq:stokes.vem.Z} for $\b u_{h,2}$ from \eqref{eq:stokes.vem.Z} for $\b u_{h,1}$ and then taking $\b v_h = \b u_{h,1} - \b u_{h,2}$ as test function, we obtain 
$$
a_h(\b u_{h,1},\b u_{h,1} - \b u_{h,2}) - a_h(\b u_{h,2},\b u_{h,1} - \b u_{h,2}) = 0.
$$
Thus, applying the strong monotonicity of $a_h(\cdot, \cdot)$ in Lemma \ref{lem:mono_ah} with $\b e_h = \b u_{h,1} - \b u_{h,2}$ and owing to Korn's first inequality \eqref{eq:Korn}, we get $\norm[\b W^{1,r}(\Omega)]{\b u_{h,1} - \b u_{h,2}} = 0$ and, as a result, $\b u_{h,1}=\b u_{h,2}$.

\medskip\noindent 
\textit{(iii) A-priori estimate.} We follow the same lines of Proposition \ref{prop:well-posed_cont} (with $\delta=0$), based on the strong monotonicity of $a_h(\cdot, \cdot)$ in Lemma \ref{lem:mono_ah}.
\end{proof}
Thanks to the discrete inf-sup condition established in Lemma \ref{inf-sup:vem} and the equivalence of the discrete problems \eqref{eq:stokes.vem} and \eqref{eq:stokes.vem.Z}, the following result hold:
\begin{corollary}[Well-posedness of \eqref{eq:stokes.vem}]
For any $r\in(1,2)$, there exists a unique solution $(\b u_h, p_h) \in \b \VDG \times \QDG$ to the discrete problem \eqref{eq:stokes.vem}. Additionally, $p_h$ satisfies the a-priori bound (cf. \eqref{eq:Cfg})
    \begin{equation}\label{eq:discrete_apriori.p}
    \norm[L^{r'}(\Omega)]{p_h} \lesssim
    \mathcal{N}(\b f, \b g)  \,.
    \end{equation}
\end{corollary}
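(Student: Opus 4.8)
The plan is to derive both assertions from the already-available ingredients via the classical saddle-point argument. First I would make precise the equivalence between \eqref{eq:stokes.vem} and \eqref{eq:stokes.vem.Z}: a pair $(\b u_h,p_h)\in\VDG\times\QDG$ solves \eqref{eq:stokes.vem} if and only if $\b u_h\in\ZDG$ solves \eqref{eq:stokes.vem.Z} and $p_h$ represents, through $b(\cdot,\cdot)$, the residual functional
$$
\b v_h\ \longmapsto\ \int_\Omega \b f_h\cdot\b v_h+\int_{\Gamma_N}\b g\cdot\b\gamma(\b v_h)-a_h(\b u_h,\b v_h),
$$
which, by \eqref{eq:stokes.vem.Z}, annihilates $\ZDG=\ker b$. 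The discrete inf-sup condition of Lemma~\ref{inf-sup:vem} says exactly that $b(\cdot,\cdot)$ induces an isomorphism between $\QDG$ and the annihilator of $\ZDG$ in $\VDG'$; hence the existence and uniqueness of $\b u_h$ established in the well-posedness result for \eqref{eq:stokes.vem.Z} transfer to existence and uniqueness of the pair $(\b u_h,p_h)$.

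For the pressure a-priori bound, I would start from the discrete inf-sup inequality applied to $p_h$ and replace $b(\b v_h,p_h)$ by the residual functional above using the first line of \eqref{eq:stokes.vem}. The volume term is controlled by H\"older's inequality, the $L^{r'}$-stability of the $L^2$-projection $\Pi^0_k$ (uniform under Assumption~\ref{ass:mesh}, since $r'\ge 2$), and the continuous embedding $\b W^{1,r}(\Omega)\hookrightarrow\b L^r(\Omega)$; the boundary term by H\"older's inequality and continuity of the trace map; and the viscous term by writing $a_h(\b u_h,\b v_h)=a_h(\b u_h,\b v_h)-a_h(\b 0,\b v_h)$ (note $a_h(\b 0,\cdot)\equiv 0$, since $\b\sigma(\cdot,\b 0)=\b 0$ and the stabilization vanishes when its first argument does) and invoking the H\"older continuity of $a_h$ from Lemma~\ref{lem:holder_ah}. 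Dividing by $\norm[\b W^{1,r}(\Omega)]{\b v_h}$ and taking the supremum over $\b v_h\in\VDG\setminus\{\b 0\}$ gives
$$
\norm[L^{r'}(\Omega)]{p_h}\ \lesssim\ \|\b f\|_{\b L^{r'}(\Omega)}+\|\b g\|_{\b L^{r'}(\Gamma_N)}+\norm[\b W^{1,r}(\Omega)]{\b u_h}^{r-1}.
$$

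Finally I would insert the velocity bound \eqref{eq:a-priori.disc.u}, so that $\norm[\b W^{1,r}(\Omega)]{\b u_h}^{r-1}\lesssim\mathcal{N}(\b f,\b g)$, and observe that the first two terms are likewise $\lesssim\mathcal{N}(\b f,\b g)$ by the definition \eqref{eq:Cfg} (the $\sigma_{\rm m}^{-1}$ factor being absorbed into $\lesssim$), which yields \eqref{eq:discrete_apriori.p}. The whole argument mirrors the pressure estimate in Proposition~\ref{prop:well-posed_cont}; I do not expect a genuine obstacle, the only points deserving an explicit line being the $L^{r'}$-boundedness of $\Pi^0_k$ applied to $\b f$ and the fact---which is precisely \eqref{eq:stokes.vem.Z}---that the residual functional vanishes on the discrete kernel, so that the Babu\v{s}ka--Brezzi machinery applies.
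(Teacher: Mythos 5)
Your argument is correct and follows essentially the same route as the paper: the equivalence with the kernel problem plus the discrete inf-sup condition of Lemma~\ref{inf-sup:vem} give existence and uniqueness of the pressure via the standard Babu\v{s}ka--Brezzi argument, and the bound \eqref{eq:discrete_apriori.p} is obtained exactly as in Proposition~\ref{prop:well-posed_cont} from the inf-sup inequality, the H\"older continuity of $a_h$ in Lemma~\ref{lem:holder_ah} (with $a_h(\b 0,\cdot)\equiv 0$), and the velocity estimate \eqref{eq:a-priori.disc.u}. The only detail you spell out that the paper leaves implicit is the treatment of $\b f_h=\Pi^0_k\b f$ via the $L^{r'}$-stability of the projection, which is fine.
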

\begin{proof}
First, we observe that, by definition, the unique solution $\b u_h$ to \eqref{eq:stokes.vem.Z} satisfies the second equation in \eqref{eq:stokes.vem}.
Then, the existence and uniqueness of the discrete pressure solution is obtained as in the linear case $r=2$, cf. \cite[Theorem 4.2.1]{boffi-brezzi-fortin:book}.
The a-priori estimate \eqref{eq:discrete_apriori.p} follows, proceeding as in Proposition \ref{prop:well-posed_cont}, from the discrete inf-sup stability in Lemma \ref{inf-sup:vem} and the H\"older continuity of $a_h(\cdot, \cdot)$ in Lemma \ref{lem:holder_ah}.
\end{proof}

\section{A-priori error analysis}
\label{sec:error_analysis}

\subsection{Additional properties of the stress-strain law}

We recall some important results regarding the stress-strain relation that are instrumental for the a-priori analysis of the scheme.
We mainly follow \cite[Section 3]{Berselli.Diening.ea:10} and \cite[Section 2]{Hirn:13}. First, we associate to the Sobolev exponent $r\in(1,2]$ the convex functions $\varphi, \varphi^*\in C^1(\mathbb{R}_0^+, \mathbb{R}_0^+)$ defined by
$$
\varphi(t) := \frac{t^r}r \qquad\text{and}\qquad
\varphi^*(t) := \frac{t^{r'}}{r'}.
$$
We also introduce, for $a\ge0$, the shifted functions $\varphi_a(t) := \int_0^t (a+s)^{r-2}s\, {\rm d} s$, which satisfy 
$$
(r-1)(a+t)^{r-2}\le \varphi_a''(t)\le (a+t)^{r-2},
$$
from which the convexity of $\varphi_a$ can be inferred and, as a result, $\varphi_a(t+s)\lesssim \varphi_a(t) +\varphi_a(s)$ uniformly in $t,s,a\in\mathbb{R}^+_0$. The following Lemma provides important properties of the shifted functions $\varphi_a$. We refer the reader to \cite[Lemmata 28--32]{Diening.Ettwein:08} for the detailed proof.

\begin{lemma}[Young type inequalities]
\label{lem:young_shifted}
For all $\varepsilon>0$ there exists $C(\varepsilon)>0$ only depending on $r$ and $\delta$ such that for all $s,t,a,\delta \ge 0$ and all $\b\tau,\b\eta\in\mathbb{R}^{d\times d}$ there holds
\begin{align}
\label{eq:Young1}
s\varphi_a'(t) +t\varphi_a'(s) &\le 
\varepsilon\varphi_a(s) + c(\varepsilon) \varphi_a(t),
\\
\varphi_{\delta+|\b\tau|}(t) &\le
\varepsilon \varphi_{\delta+|\b\eta|}(|\b\tau-\b\eta|)
+c(\varepsilon) \varphi_{\delta+|\b\eta|}(t).
\end{align}
\end{lemma}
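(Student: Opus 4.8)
The statement is classical in the theory of Orlicz spaces generated by shifted $N$-functions, and I would only recall the argument from \cite[Lemmata 28--32]{Diening.Ettwein:08}. The plan is to reduce everything to a few elementary facts about the shifted functions: the pointwise equivalence
\[
\varphi_a(t) \simeq (a+t)^{r-2} t^2 \simeq t\,\varphi_a'(t)
\qquad\text{uniformly in } a,\delta\ge 0,
\]
together with the fact that $\varphi_a$ and its complementary function $(\varphi_a)^*$ satisfy the $\Delta_2$ condition with constants independent of the shift $a$ (this is where $r\in(1,2]$, hence $r'\in[2,\infty)$, is used), and the Fenchel--Young identity $(\varphi_a)^*(\varphi_a'(t)) = t\,\varphi_a'(t) - \varphi_a(t)$, which by the equivalence above is $\lesssim \varphi_a(t)$.

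For \eqref{eq:Young1} I would invoke the scaled Young inequality for a pair of complementary $\Delta_2$ $N$-functions: for every $\varepsilon>0$ there is $c(\varepsilon)>0$ such that $uv\le\varepsilon\,\psi(u)+c(\varepsilon)\,\psi^*(v)$ for all $u,v\ge0$. Applying it with $\psi=\varphi_a$, $u=s$, $v=\varphi_a'(t)$ and using $(\varphi_a)^*(\varphi_a'(t))\lesssim\varphi_a(t)$ gives $s\,\varphi_a'(t)\le\varepsilon\,\varphi_a(s)+c(\varepsilon)\,\varphi_a(t)$; applying it with $\psi=(\varphi_a)^*$, $u=\varphi_a'(s)$, $v=t$ gives, again by Fenchel--Young, $t\,\varphi_a'(s)\le\varepsilon\,\varphi_a(s)+c(\varepsilon)\,\varphi_a(t)$. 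Summing the two and rescaling $\varepsilon$ yields the first inequality. For the change-of-shift inequality I would first note $\big||\b\tau|-|\b\eta|\big|\le|\b\tau-\b\eta|=:s$, so the shifts $a:=\delta+|\b\tau|$ and $b:=\delta+|\b\eta|$ obey $|a-b|\le s$, and then argue by dichotomy on the size of $t$ relative to $s$. If $t\gtrsim s$, then $a+t\simeq b+t$ (since $|a-b|\le s\lesssim t$), hence $\varphi_a(t)\simeq\varphi_b(t)$, which is absorbed in the $c(\varepsilon)\varphi_{\delta+|\b\eta|}(t)$ term; if $t\lesssim s$, one uses the monotonicity of $x\mapsto\varphi_b(x)$ and the explicit form $\varphi_a(t)\simeq(a+t)^{r-2}t^2$ together with $r-2\le0$ to compare $(a+t)^{r-2}t^2$ with $(b+s)^{r-2}s^2$, the factor $\varepsilon$ being precisely what makes this work in the regime where $b$ dominates $a$ and $t$ and $\varphi_b(t)$ degenerates while $\varphi_b(s)$ stays comparably large.

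The main obstacle — and the reason I would simply cite \cite{Diening.Ettwein:08} rather than reproduce the details — is the bookkeeping of constants: each equivalence $\varphi_a(t)\simeq(a+t)^{r-2}t^2$, each $\Delta_2$ estimate, and each use of Young's inequality must be shown to hold with constants depending only on $r$ (and $\delta$), \emph{uniformly} in the shift $a$ and in $s,t$. This uniformity is exactly what is needed later, since in the error analysis the shift is taken to be $\delta+|\b\epsilon(\b u_h)|$ evaluated pointwise, so any residual dependence of the constants on $a$ would be fatal.
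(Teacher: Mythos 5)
Your proposal is correct and takes essentially the same route as the paper, which offers no proof of this lemma at all but simply defers to \cite[Lemmata 28--32]{Diening.Ettwein:08} --- exactly the source you cite. Your sketch (uniform equivalence $\varphi_a(t)\simeq(a+t)^{r-2}t^2\simeq t\varphi_a'(t)$, shift-uniform $\Delta_2$ for $\varphi_a$ and its conjugate, the $\varepsilon$-Young inequality combined with Fenchel--Young for \eqref{eq:Young1}, and the dichotomy on $t$ versus $|\b\tau-\b\eta|$ for the change of shift) is a faithful outline of the standard argument in that reference, so nothing further is needed.
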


The next result showing the equivalence of several quantities is strictly related to the continuity and monotonicity assumptions given in Assumption~\ref{ass:hypo}. 
The proofs of the next lemma can be found in \cite[Section 2.3]{Hirn:13}. 
 The lemma here below applies to any (scalar or tensor valued) function $\b{\sigma}$ which satisfies Assumption~\ref{ass:hypo}. In the following, with a slight abuse of notation, we will apply such lemma both to the constitutive law $\b{\sigma}$ but also to the auxiliary scalar function $\sigma(\tau):= |\tau|^{r-2} \tau$.
\begin{lemma}\label{lm:Hirn}
Let $\b\sigma$ satisfy \eqref{eq:hypo} for $r\in(1,2]$ and $\delta \geq0$. 
Then, uniformly for all $\b\tau,\b\eta\in \mathbb{R}^{d\times d}_{\rm s}$ and all $\b{v},\b{w}\in\b{U}$ there hold 
\begin{subequations}
  \begin{alignat}{2} 
  \label{eq:extrass.continuity}
  |\b{\sigma}(\cdot,\b{\tau})-\b{\sigma}(\cdot,\b{\eta})| & \simeq 
  \left(\delta+|\b\tau|+|\b\eta| \right)^{r-2}
  |\b\tau-\b\eta| \simeq \varphi_{\delta+|\b\tau|}'(|\b\tau - \b\eta|),
  \\
  \label{eq:extrass.monotonicity}
  \left(\b{\sigma}(\cdot,\b\tau)-\b{\sigma}(\cdot, \b\eta)\right):\left(\b\tau-\b\eta\right) &\simeq 
  \left(\delta+|\b\tau|+|\b\eta|\right)^{r-2} |\b\tau-\b\eta|^{2} \simeq 
  \varphi_{\delta+|\b\tau|}(|\b\tau - \b\eta|) ,
  \end{alignat}
\end{subequations}
where the hidden constants only depend on  $\sigma_{\rm{c}}, \sigma_{\rm{m}}$ in Assumption~\ref{ass:hypo} and on $r$.
\end{lemma}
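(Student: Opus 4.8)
## Proof Plan for Lemma \ref{lm:Hirn}

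The plan is to reduce everything to two elementary facts about power functions and one short calculus estimate, and then read off all four equivalences directly from Assumption~\ref{ass:hypo} and the Cauchy--Schwarz inequality.

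\emph{Step 1 (algebraic equivalences of the weights).} Since the $\ell^1$- and $\ell^r$-norms on $\mathbb{R}^3$ are equivalent with absolute constants, one has $\delta^r+|\b\tau|^r+|\b\eta|^r \simeq (\delta+|\b\tau|+|\b\eta|)^r$; raising to the (negative) power $(r-2)/r$ gives $(\delta^r+|\b\tau|^r+|\b\eta|^r)^{\frac{r-2}{r}} \simeq (\delta+|\b\tau|+|\b\eta|)^{r-2}$. Moreover, by the triangle inequality $|\b\eta|\le|\b\tau|+|\b\tau-\b\eta|$ and $|\b\tau-\b\eta|\le|\b\tau|+|\b\eta|$, so $\delta+|\b\tau|+|\b\tau-\b\eta| \simeq \delta+|\b\tau|+|\b\eta|$, and hence also $(\delta+|\b\tau|+|\b\tau-\b\eta|)^{r-2} \simeq (\delta+|\b\tau|+|\b\eta|)^{r-2}$.

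\emph{Step 2 (the shifted potential $\varphi_a$).} I will show that $\varphi_a(t)\simeq (a+t)^{r-2}t^2$, uniformly in $a,t\ge0$, and recall the identity $\varphi_a'(t)=(a+t)^{r-2}t$. A direct computation gives $\varphi_a''(s)=(a+s)^{r-3}\bigl(a+(r-1)s\bigr)\ge0$ because $r>1$, so $\varphi_a'$ is nondecreasing. Hence $\varphi_a(t)=\int_0^t\varphi_a'(s)\,ds \le t\,\varphi_a'(t)=(a+t)^{r-2}t^2$, while restricting the integral to $[t/2,t]$ and using $\varphi_a'(s)\ge\varphi_a'(t/2)$ together with $(a+t/2)^{r-2}\ge(a+t)^{r-2}$ (valid since $r-2\le0$) yields $\varphi_a(t)\ge \tfrac{t}{2}\varphi_a'(t/2)\gtrsim (a+t)^{r-2}t^2$. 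Specializing to $a=\delta+|\b\tau|$ and $t=|\b\tau-\b\eta|$ and invoking Step~1, we obtain $\varphi_{\delta+|\b\tau|}'(|\b\tau-\b\eta|)\simeq(\delta+|\b\tau|+|\b\eta|)^{r-2}|\b\tau-\b\eta|$ and $\varphi_{\delta+|\b\tau|}(|\b\tau-\b\eta|)\simeq(\delta+|\b\tau|+|\b\eta|)^{r-2}|\b\tau-\b\eta|^2$.

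\emph{Step 3 (assembling the equivalences).} The upper bound in \eqref{eq:extrass.continuity} is exactly the H\"older continuity \eqref{eq:hypo.continuity} combined with Step~1. For the lower bound, Cauchy--Schwarz gives $(\b\sigma(\cdot,\b\tau)-\b\sigma(\cdot,\b\eta)):(\b\tau-\b\eta)\le|\b\sigma(\cdot,\b\tau)-\b\sigma(\cdot,\b\eta)|\,|\b\tau-\b\eta|$, which together with the strong monotonicity \eqref{eq:hypo.monotonicity} and division by $|\b\tau-\b\eta|$ (the case $\b\tau=\b\eta$ being trivial) yields $|\b\sigma(\cdot,\b\tau)-\b\sigma(\cdot,\b\eta)|\gtrsim(\delta+|\b\tau|+|\b\eta|)^{r-2}|\b\tau-\b\eta|$, using Step~1 once more. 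This establishes \eqref{eq:extrass.continuity}; then \eqref{eq:extrass.monotonicity} follows immediately, its lower bound being \eqref{eq:hypo.monotonicity} plus Step~1 and its upper bound being Cauchy--Schwarz followed by the bound \eqref{eq:extrass.continuity} just proved. All identities are pointwise in $\mathbb{R}^{d\times d}_{\rm s}$, hence extend verbatim to $\b\tau=\b\epsilon(\b v)(\b x)$, $\b\eta=\b\epsilon(\b w)(\b x)$ for $\b v,\b w\in\b U$; and since the scalar map $\sigma(\tau)=|\tau|^{r-2}\tau$ satisfies \eqref{eq:hypo} with $\delta=0$ (this is \eqref{eq:bound_hyp_model}), the same chain of inequalities applies in the scalar case.

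\emph{Main obstacle.} There is no substantial difficulty: the only place requiring care is Step~2, namely controlling the direction of the inequalities when raising sums to the negative exponent $r-2$ and when moving the slowly varying weight $(a+s)^{r-2}$ in and out of the integral defining $\varphi_a$. The monotonicity of $\varphi_a'$, inherited from $\varphi_a''\ge0$, is precisely what makes the two-sided bound $\varphi_a(t)\simeq(a+t)^{r-2}t^2$ clean.
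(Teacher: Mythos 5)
Your proof is correct. Note, however, that the paper does not prove this lemma at all: it simply states it and refers the reader to \cite[Section 2.3]{Hirn:13} (and to \cite{Diening.Ettwein:08} for the surrounding material on shifted $N$-functions), so there is no in-paper argument to compare against. Your self-contained derivation follows exactly the standard lines of those references: the equivalence $\delta^r+|\b\tau|^r+|\b\eta|^r\simeq(\delta+|\b\tau|+|\b\eta|)^r$ from the equivalence of $\ell^1$ and $\ell^r$ norms on $\mathbb{R}^3$, the replacement of $|\b\eta|$ by $|\b\tau-\b\eta|$ in the weight via the triangle inequality, the two-sided bound $\varphi_a(t)\simeq(a+t)^{r-2}t^2$ obtained from the monotonicity of $\varphi_a'$ (your computation $\varphi_a''(s)=(a+s)^{r-3}(a+(r-1)s)\ge 0$ is consistent with the bounds on $\varphi_a''$ quoted in the paper), and the closing of the circle of equivalences by combining \eqref{eq:hypo.continuity}, \eqref{eq:hypo.monotonicity}, and Cauchy--Schwarz. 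All steps check out, including the trivial case $\b\tau=\b\eta$ before dividing by $|\b\tau-\b\eta|$. What your argument buys is that the lemma becomes verifiable without consulting the external reference, with explicit constants depending only on $\sigma_{\rm c}$, $\sigma_{\rm m}$, and $r$, as claimed. One small caveat on your final sentence: \eqref{eq:bound_hyp_model} only supplies the strong-monotonicity half of \eqref{eq:hypo} for the auxiliary scalar map $\sigma(\tau)=|\tau|^{r-2}\tau$; the H\"older-continuity half is an equally standard inequality but is not contained in \eqref{eq:bound_hyp_model} and would need its own (one-line) justification if you want that remark to be complete.
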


\subsection{Convergence}
In the present section, we derive the rate of convergence for the proposed VEM scheme \eqref{eq:stokes.vem} in the case $\delta=0$, see Remark \ref{rem:deltazero}.

\begin{proposition}\label{prop:main} 
Let $\b u$ be the solution of problem \eqref{eq:stokes.weak.Z} and let $\b u_h$ be the solution of problem \eqref{eq:stokes.vem.Z}. 
Assume that $\b u \in \b W^{k_1+1,r}(\Omega_h)$, 
$\b\sigma(\cdot, \b \epsilon(\b u)) \in \mathbb{W}^{k_2,r'}(\Omega_h)$, $\b f \in \b W^{k_3+1,r'}(\Omega_h)$ for some
$k_1$, $k_2$, $k_3 \le k$. 
Let the mesh regularity assumptions stated in Assumption~\ref{ass:mesh} hold. Then, we have
\begin{equation}\label{falcao}
    \|\b u -\b u_h\|_{\b W^{1,r}(\Omega_h)} \lesssim h^{k_1 r/2} R_1^{r/2} + h^{k_1} R_1 + h^{k_2} R_2 + h^{k_3+2} R_3\, ,
\end{equation}
where the hidden constant depends on $\mathcal{N}(\b f, \b g)$ (cf. \eqref{eq:Cfg}) and the regularity terms are
\begin{equation}\label{eq:reg:terms}
R_1 := | \b u |_{\b W^{k_1+1,r}(\Omega_h)} \, , \qquad
R_2 := |\b\sigma(\cdot, \b \epsilon(\b u))|_{\mathbb{W}^{k_2,r'}(\Omega_h)} \,, \qquad
R_3 := |\b f |_{\b W^{k_3+1,r'}(\Omega_h)} \, .
\end{equation}
\end{proposition}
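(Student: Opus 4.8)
The plan is to follow the classical strategy for monotone operator error estimates in the quasi-norm setting (as in \cite{Barrett.Liu:94}), adapted to the virtual element framework. First I would introduce the virtual element interpolant $\b u_I \in \ZDG$ of the exact solution $\b u$, which is legitimate since $\divs\b u = 0$ implies $\b u_I \in \ZDG$, and split the error as $\b u - \b u_h = (\b u - \b u_I) + (\b u_I - \b u_h)$. The first piece $\b u - \b u_I$ is controlled directly by the interpolation estimate in Lemma \ref{lem:approx-interp}, contributing the term $h^{k_1} R_1$. For the discrete part $\b\delta_h := \b u_I - \b u_h \in \ZDG$, I would test the strong monotonicity of $a_h(\cdot,\cdot)$ from Lemma \ref{lem:mono_ah} with $\b u_I$ and $\b u_h$ and the difference $\b\delta_h$. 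Crucially, using the a-priori bounds \eqref{eq:a-priori.disc.u} and the stability of $\b u_I$, the prefactor $\left(\|\b u_I\|^r_{\b W^{1,r}} + \|\b u_h\|^r_{\b W^{1,r}}\right)^{(r-2)/r}$ is bounded below by a constant depending only on $\mathcal{N}(\b f,\b g)$, so I obtain $\|\b\delta_h\|^2_{\b W^{1,r}(\Omega)} \lesssim a_h(\b u_I,\b\delta_h) - a_h(\b u_h,\b\delta_h)$.

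The core of the argument is then to bound the right-hand side $a_h(\b u_I,\b\delta_h) - a_h(\b u_h,\b\delta_h)$. Since $\b\delta_h \in \ZDG$, I use the discrete equation \eqref{eq:stokes.vem.Z} to replace $a_h(\b u_h,\b\delta_h) = \int_\Omega \b f_h\cdot\b\delta_h + \int_{\Gamma_N}\b g\cdot\b\gamma(\b\delta_h)$, and the continuous equation \eqref{eq:stokes.weak.Z} — noting $\b\delta_h \in \ZDG \subseteq \ZCG$ — to write $\int_\Omega \b f\cdot\b\delta_h + \int_{\Gamma_N}\b g\cdot\b\gamma(\b\delta_h) = a(\b u,\b\delta_h)$. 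This yields a consistency-error decomposition
\[
\|\b\delta_h\|^2_{\b W^{1,r}(\Omega)} \lesssim \big(a_h(\b u_I,\b\delta_h) - a(\b u,\b\delta_h)\big) + \int_\Omega (\b f - \b f_h)\cdot\b\delta_h,
\]
and the last term is handled by the projection property of $\b f_h = \Pi^0_k\b f$ together with orthogonality against polynomials (gaining $h^{k_3+2}$ after using $\b\delta_h - \Pi^0_{k-1}\b\delta_h$ and an $L^r$–$L^{r'}$ Hölder split), giving $h^{k_3+2} R_3$. For the main term I split $a_h(\b u_I,\b\delta_h) - a(\b u,\b\delta_h)$ into: (a) the polynomial-projection consistency error $\int_\Omega \b\sigma(\cdot,\b\Pi^0_{k-1}\b\epsilon(\b u_I)):\b\Pi^0_{k-1}\b\epsilon(\b\delta_h) - \int_\Omega\b\sigma(\cdot,\b\epsilon(\b u)):\b\epsilon(\b\delta_h)$, which I further decompose by inserting $\b\sigma(\cdot,\b\epsilon(\b u))$ projected and using the Hölder continuity \eqref{eq:hypo.continuity}, the equivalence in Lemma \ref{lm:Hirn}, and the quasi-norm Young inequality of Lemma \ref{lem:young_shifted} to absorb terms of the form $\varphi_{|\b\epsilon(\b u)|}(|\b\epsilon(\b\delta_h)|)$ back into the left-hand side — this produces the $h^{k_2}R_2$ term from the regularity of $\b\sigma(\cdot,\b\epsilon(\b u))$ and an $h^{k_1}R_1$-type term from $\b\epsilon(\b u - \b u_I)$; and (b) the stabilization term $S((I-\Pi^0_k)\b u_I,(I-\Pi^0_k)\b\delta_h)$, which by the Hölder continuity of $S$ (Lemma \ref{Lemma:S_holderG}, bound \eqref{eq:continuity:local_stabG2}) and the approximation estimate $\|(I-\Pi^0_k)\b u_I\|_{\b W^{1,r}(\Omega_h)} \lesssim \|(I-\Pi^0_k)(\b u_I - \b u)\|_{\b W^{1,r}} + \|(I-\Pi^0_k)\b u\|_{\b W^{1,r}} \lesssim h^{k_1}R_1$ gives $S((I-\Pi^0_k)\b u_I,(I-\Pi^0_k)\b\delta_h) \lesssim h^{k_1(r-1)}R_1^{r-1}\|\b\delta_h\|_{\b W^{1,r}}$.

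The main obstacle — and the source of the nonstandard exponent $k_1 r/2$ — is that the consistency terms are \emph{not} all of the form $(\text{small})\cdot\|\b\delta_h\|_{\b W^{1,r}(\Omega)}$: the quasi-norm structure forces some of them into the form $(\text{small})\cdot\|\b\delta_h\|_{\b W^{1,r}}^{r/2}$ or $(\text{small})\cdot\varphi_{\ldots}(|\b\epsilon(\b\delta_h)|)^{1/2}$, reflecting the degeneracy $\delta=0$ where the "natural" distance is the quasi-norm $\int\varphi_{|\b\epsilon(\b u)|}(|\b\epsilon(\b v)-\b\epsilon(\b w)|)$ rather than the $\b W^{1,r}$-norm. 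After collecting all contributions one arrives at an inequality of the shape $X^2 \lesssim A\,X + B\,X^{r/2}$ with $X = \|\b\delta_h\|_{\b W^{1,r}(\Omega)}$, $A \simeq h^{k_1} R_1 + h^{k_2} R_2 + h^{k_3+2}R_3$ (using $r-1 \ge r/2$ for the stabilization term, or keeping it separate) and $B \simeq h^{k_1 r/2} R_1^{r/2}$ coming from the genuinely quasi-norm pieces; a Young inequality on $B\,X^{r/2} \le \frac14 X^2 + C B^{2/(2-r)}$ would be too lossy, so instead I would use the elementary fact that $X^2 \lesssim AX + BX^{r/2}$ with $r/2 < 1$ implies $X \lesssim A + B^{2/2} = A + B$ directly (splitting on whether $X \le 1$ or not, or noting $X^{r/2} \le \max(1,X) \le 1 + X$ and $X^2 - X \lesssim AX + B$). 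Combining $X \lesssim A + B$ with the triangle inequality $\|\b u - \b u_h\|_{\b W^{1,r}} \le \|\b u - \b u_I\|_{\b W^{1,r}} + X \lesssim h^{k_1}R_1 + A + B$ yields exactly \eqref{falcao}. The remaining care is purely bookkeeping: tracking that the hidden constants depend only on $\mathcal{N}(\b f,\b g)$ through the uniform lower bound on the monotonicity prefactor, and verifying the element-wise Hölder splittings sum correctly to global $\b W^{1,r}(\Omega_h)$ norms.
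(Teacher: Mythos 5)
Your overall architecture coincides with the paper's: interpolate $\b u$ by $\b u_I\in\ZDG$, use the strong monotonicity of $a_h(\cdot,\cdot)$ (Lemma \ref{lem:mono_ah}) with a prefactor bounded below via \eqref{eq:a-priori_cont.u} and \eqref{eq:a-priori.disc.u}, replace $a_h(\b u_h,\cdot)$ and $a(\b u,\cdot)$ through the two kernel equations, and split the consistency error into a projection term, a stabilization term and a load term. However, your treatment of the stabilization consistency term has a genuine gap. Bounding $S(\widetilde{\b u}_I,\widetilde{\b\xi}_h)$ by the H\"older continuity \eqref{eq:continuity:local_stabG2} gives $\|\widetilde{\b u}_I\|_{\b W^{1,r}}^{r-1}\|\b\xi_h\|_{\b W^{1,r}}\lesssim (h^{k_1}R_1)^{r-1}\|\b\xi_h\|_{\b W^{1,r}}$, which after the quadratic inequality contributes $h^{k_1(r-1)}R_1^{r-1}$ to the error. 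Since $r-1\le r/2$ for $r\in(1,2]$ (your assertion ``$r-1\ge r/2$'' has the inequality reversed), setting $t:=h^{k_1}R_1$ one has $t^{r-1}\gg t^{r/2}+t$ for small $t$, so this term is \emph{not} dominated by $h^{k_1 r/2}R_1^{r/2}+h^{k_1}R_1$ and the claimed rate is lost. The paper avoids this by treating $T_2=-S(\widetilde{\b u}_I,\widetilde{\b\xi}_h)$ with the shifted Young inequality \eqref{eq:Young1} together with the strong monotonicity of $S$ (via \eqref{eq:strong1}): an $\varepsilon$-fraction of $S(\widetilde{\b u}_h,\widetilde{\b\xi}_h)-S(\widetilde{\b u}_I,\widetilde{\b\xi}_h)$ is absorbed into the left-hand side, and the remainder is $C(\varepsilon)\,h^{k_1r}R_1^r$, an \emph{additive} contribution to $\|\b\xi_h\|^2_{\b W^{1,r}(\Omega)}$ whose square root produces exactly $h^{k_1r/2}R_1^{r/2}$.

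A second, related flaw is the closing algebra. The correct final inequality has the shape $X^2\lesssim AX+B'$ with $B'\simeq h^{k_1r}R_1^r$ appearing without any power of $X$ (the same absorption mechanism is used for $T_1^B$), whence $X\lesssim A+\sqrt{B'}$. Your proposed shape $X^2\lesssim AX+BX^{r/2}$ with $B\simeq h^{k_1r/2}R_1^{r/2}$ does not imply $X\lesssim A+B$: your own reduction $X^{r/2}\le 1+X$ yields $X^2\lesssim(A+B)X+B$, hence only $X\lesssim A+B+\sqrt{B}$, and $\sqrt{B}\simeq h^{k_1r/4}R_1^{r/4}$ is again too weak. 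Both defects disappear once the quasi-norm pieces are absorbed into the monotonicity term rather than estimated against $\|\b\xi_h\|_{\b W^{1,r}}$ or a fractional power thereof.
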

\begin{proof}
We set $\b \xi_h:=\b u_h -\b u_I$. For the sake of brevity in the following we employ the notation $\widetilde{\b v}:=(I - \Pi^0_k) \b v$ for any $\b v \in \b L^2(\Omega)$ (that is, a tilde symbol over an $\b L^2$ function denotes the application of the $(I - \Pi^0_k)$ operator).
Manipulating \eqref{eq:stokes.vem.Z} and \eqref{eq:stokes.weak.Z} and recalling \eqref{eq:forma ah} we have
\begin{equation}
\label{eq:conv0}
\begin{aligned}
    &a_h(\b u_h, \b \xi_h) - a_h(\b u_I, \b \xi_h)
    =  
    a(\b u, \b \xi_h) - a_h(\b u_I, \b \xi_h) +
    (\b f_h -\b f , \b \xi_h )  
    \\
    &= 
    \left(\int_\Omega \b\sigma(\cdot, \b \epsilon(\b u)) : \b\epsilon( \b \xi_h) - \int_\Omega \b\sigma(\cdot, \b \Pi^0_{k-1}\b \epsilon(\b u_I)) : \b \Pi^0_{k-1} \b\epsilon( \b \xi_h)  \right) - S(\widetilde{\b u}_I, \widetilde{\b \xi}_h) 
    + (\b f_h -\b f , \b \xi_h )
    \\
    &=: T_1+T_2+T_3 \,.
\end{aligned}    
\end{equation}
We next estimate each term on the right-hand side above.
In the following $C$  will denote a generic positive constant independent of $h$ that may change at each occurrence, whereas the positive parameter $\theta$ adopted in \eqref{eq:T1A} and \eqref{eq:T3} will be specified later.

\noindent 
$\bullet$ \, Estimate of $T_1$. Employing the definition of $L^2$-projection \eqref{eq:P0_k^E} we have
\begin{equation}
\label{eq:T10}
\begin{aligned}
T_1 &= \int_\Omega \bigl(\b\sigma(\cdot, \b \epsilon(\b u)) - \b \Pi^0_{k-1}\b\sigma(\cdot, \b \Pi^0_{k-1} \b \epsilon(\b u_I)) \bigr) : \b\epsilon(\b \xi_h)
\\
&=
\int_\Omega \bigl((I - \b \Pi^0_{k-1})\b\sigma(\cdot, \b \epsilon(\b u))  \bigr) : \b\epsilon(\b \xi_h)+
\int_\Omega \bigl(\b\sigma(\cdot, \b \epsilon(\b u)) - \b\sigma(\cdot, \b \Pi^0_{k-1} \b \epsilon(\b u_I)) \bigr) : \b \Pi^0_{k-1} \b\epsilon(\b \xi_h)
\\
&=:T_1^A+ T_1^B \,.
\end{aligned}
\end{equation}
The term $T_1^A$ can be bounded as follows
\begin{equation}
\label{eq:T1A}
\begin{aligned}
    T_1^A &= \sum_{E \in \Omega_h}
    \int_E \bigl((I - \b \Pi^{0,E}_{k-1})\b\sigma(\cdot, \b \epsilon(\b u))  \bigr) : \b\epsilon(\b \xi_h) 
    \\
   & \le C \sum_{E\in \Omega_h} h_E^{k_2} |\b\sigma(\cdot, \b \epsilon(\b u))|_{\mathbb{W}^{k_2,r'}(E)} |{\b \xi}_h|_{\b W^{1,r}(E)} 
    & \quad & \text{($(r',r)$-H\"older ineq. \& Lemma \ref{lem:approx-interp})}
    \\
   &
   \le \frac{C}{\theta}  h^{2k_2} |\b\sigma(\cdot, \b \epsilon(\b u))|_{\mathbb{W}^{k_2,r'}(\Omega_h)}^2 + 
   \frac{\theta}{2} \Vert{\b \xi}_h\Vert_{\b W^{1,r}(\Omega)}^2 
    & \quad & \text{($(r',r)$-H\"older ineq. \& Young ineq.)}
\end{aligned}
\end{equation}
Employing \eqref{eq:extrass.continuity} with $\delta = 0$ we obtain 
   \[
   \begin{aligned}
    T_1^B &= 
    \sum_{E\in\Omega_h}\int_E \vert \b\sigma(\cdot, \b \epsilon(\b u)) - \b\sigma(\cdot, \PP0 \b \epsilon(\b u_I))\vert \, \vert \PP0 \b\epsilon(\b \xi_h)\vert 
    \\
    &\leq 
    \sum_{E\in\Omega_h}\int_E
    C\varphi_{|\PP0 \b \epsilon(\b u_I)|}'(|\PP0 \b \epsilon(\b u_I) - \b \epsilon(\b u)|) \,\vert \PP0 \b\epsilon(\b \xi_h)\vert \,.
    \end{aligned}
    \]
Employing \eqref{eq:Young1} we get that for every $\varepsilon>0$ there exists a positive constant $C(\varepsilon)$ such that 
\[
\begin{aligned}
     T_1^B &\le
    \varepsilon \sum_{E\in\Omega_h}\int_E
    \varphi_{|\PP0 \b \epsilon(\b u_I)|}(|\PP0 \b \epsilon(\b \xi_h)|) + 
    C(\varepsilon) \sum_{E\in\Omega_h}\int_E\varphi_{|\PP0 \b \epsilon(\b u_I)|}(|\PP0 \b \epsilon(\b u_I) - \b \epsilon(\b u)|) .
\end{aligned}
\]     
Using \eqref{eq:extrass.monotonicity}  we obtain 
(with $\gamma$ denoting the associated uniform hidden constant)
\[
\begin{aligned}
T_1^B &\le
    \gamma \varepsilon
    (\b\sigma(\cdot, \b\Pi_{k-1}^0 \b \epsilon(\b u_h))- \b\sigma(\cdot, \b\Pi_{k-1}^0 \b \epsilon(\b u_I)), \b\Pi_{k-1}^0 \b\epsilon(\b \xi_h)) \\
    & +C(\varepsilon) 
    \sum_{E\in\Omega_h} \int_E (|\PP0 \epsilon(\b u_I)|+|\b\epsilon(\b u)|)^{r-2}  
    |\b\epsilon(\b u) - \PP0 \epsilon(\b u_I)|^2  \,.   
\end{aligned}
\]
Notice that the constant $C(\varepsilon)$ may depend on $\b \sigma$, the degree $k$, the domain $\Omega$ but, given our mesh assumptions, it is independent of the particular mesh or mesh element within the family $\{ \Omega_h \}_h$. 
Recalling that $r-2<0$, and employing the fact that $|\b\epsilon(\b u) - \PP0 \epsilon(\b u_I)|\leq |\b\epsilon(\b u)| +| \PP0 \epsilon(\b u_I)|$, from the last equation we get
\[
T_1^B\le
    \gamma \varepsilon
    (\b\sigma(\cdot, \b\Pi_{k-1}^0 \b \epsilon(\b u_h))- \b\sigma(\cdot, \b\Pi_{k-1}^0 \b \epsilon(\b u_I)), \b\Pi_{k-1}^0 \b\epsilon(\b \xi_h))
    +C(\varepsilon) 
    \sum_{E\in\Omega_h} \| \b\epsilon(\b u) - \PP0 \epsilon(\b u_I)\|^r_{\mathbb{L}^r(E)} \, ,
\]
which, making use of Lemma \ref{lem:approx-interp}, and taking $\varepsilon= \frac{1}{2 \gamma}$ becomes
\begin{equation}
\label{eq:T1B}
T_1^B\le
    \frac{1}{2}
    (\b\sigma(\cdot, \b\Pi_{k-1}^0 \b \epsilon(\b u_h))- \b\sigma(\cdot, \b\Pi_{k-1}^0 \b \epsilon(\b u_I)), \b\Pi_{k-1}^0 \b\epsilon(\b \xi_h))
    + C h^{k_1r} | \b u |_{W^{k_1+1,r}(\Omega_h)}^r \, .
\end{equation}
Combining \eqref{eq:T1A} and \eqref{eq:T1B} in \eqref{eq:T10} we infer
\begin{equation}
\label{eq:T1}
\begin{aligned}
T_1 &\le
    \frac{1}{2}
    (\b\sigma(\cdot, \b\Pi_{k-1}^0 \b \epsilon(\b u_h))- \b\sigma(\cdot, \b\Pi_{k-1}^0 \b \epsilon(\b u_I)), \b\Pi_{k-1}^0 \b\epsilon(\b \xi_h)) +
    \\
    & \quad + C h^{k_1r} | \b u |_{\b W^{k_1+1,r}(\Omega_h)}^r +
    \frac{C}{\theta}  h^{2k_2} |\b\sigma(\cdot, \b \epsilon(\b u))|_{\mathbb{W}^{k_2,r'}(\Omega_h)}^2 + 
   \frac{\theta}{2} \Vert{\b \xi}_h\Vert_{\b W^{1,r}(\Omega)}^2 \, .
\end{aligned}    
\end{equation}
\noindent 
$\bullet$ \, Estimate of $T_2$. 
Recalling definitions \eqref{eq:Sglobal} and \eqref{eq:dofi} 
with $\delta=0$ we have
\[
    T_2 = - \sum_{E \in \Omega_h}S^E(\widetilde{\b u}_I, \widetilde{\b \xi}_h)
    \le \sum_{E\in \Omega_h} h_E^{2-r} |\DOF(\widetilde{\b u}_I)|^{r-2} \vert\DOF (\widetilde{\b u}_I) \vert \, \vert \DOF (\widetilde{\b \xi}_h)\vert \,.
\]
Employing \eqref{eq:extrass.continuity} in Lemma \ref{lm:Hirn} to the scalar function   
$\sigma(\tau)= |\tau|^{r-2} \tau$ (hence $\delta=0$ and $\eta =0$) we have
\[
T_2 \le \sum_{E \in \Omega_h} C \varphi'_{|\DOF (\widetilde{\b u}_I)|}(|\DOF (\widetilde{\b u}_I)|) \, \vert \DOF (\widetilde{\b \xi}_h)\vert \,.
\]
Employing \eqref{eq:Young1} we get that for every $\varepsilon>0$ there exists a positive constant $C(\varepsilon)$ such that 
\[
\begin{aligned} 
    T_2 & \le \varepsilon \sum_{E\in \Omega_h} h_E^{2-r} 
      \varphi_{|\DOF (\widetilde{\b u}_I)|}(|\DOF(\widetilde{\b \xi}_h)|)
    + C(\varepsilon) \sum_{E\in \Omega_h} h_E^{2-r}  \varphi_{|\DOF (\widetilde{\b u}_I)|}(|\DOF (\widetilde{\b u}_I)|) \,.
\end{aligned}
\]
We now use \eqref{eq:extrass.monotonicity} and, denoting with $\gamma$ the hidden constant, we infer
\begin{equation}
\label{eq:T20}
\begin{aligned} 
T_2 & \le \varepsilon \gamma \sum_{E\in \Omega_h} h_E^{2-r} 
      (|\DOF (\widetilde{\b u}_I)| + |\DOF (\widetilde{\b u}_h)|)^{r-2} |\DOF(\widetilde{\b \xi}_h)|^2 + C(\varepsilon) \sum_{E\in \Omega_h} h_E^{2-r}|\DOF (\widetilde{\b u}_I)|^r =: T_2^A + T^2_B \,.
\end{aligned}
\end{equation}
Employing \eqref{eq:strong1} in Lemma \ref{lm:monotonicity:local_stab} (with  $\b u_h = \widetilde{\b u}_h$ and  $\b w_h = \widetilde{\b u}_I$) and recalling definition \eqref{eq:Sglobal} we obtain
\begin{equation}
\label{eq:T2A}
T_2^A \le \varepsilon C  \gamma \sum_{E \in \Omega_h}
(S^E(\widetilde{\b u}_h, \widetilde{\b \xi}_h) - 
S^E(\widetilde{\b u}_I, \widetilde{\b \xi}_h) ) 
= \varepsilon C  \gamma
(S(\widetilde{\b u}_h, \widetilde{\b \xi}_h) - 
S(\widetilde{\b u}_I, \widetilde{\b \xi}_h) ) \,.
\end{equation}
Employing definition \eqref{eq:dofi}, Lemma \ref{Lem:vemstab-2} (with $\b v_h=\widetilde{\b u}_I$), a triangular inequality and Lemma \ref{lem:approx-interp} we infer
\begin{equation}
\label{eq:T2B}
\begin{aligned}
     T_2^B &\le
     C(\varepsilon) \sum_{E\in \Omega_h} S^E(\widetilde{\b u}_I, \widetilde{\b u}_I) \le 
     C(\varepsilon) \sum_{E\in \Omega_h} \vert \widetilde{\b u}_I\vert^r_{\b W^{1, r}(E)}
     \\
     &\le 
     C(\varepsilon) \sum_{E\in \Omega_h} \left(\vert \widetilde{\b u} - \widetilde{\b u}_I\vert^r_{\b W^{1, r}(E)} 
     + \vert  \widetilde{\b u} \vert^r_{\b W^{1, r}(E)} \right)
      \le C(\varepsilon) h^{k_1r}  | \b u |^r_{\b W^{k_1+1,r}(\Omega_h)} \, .
   \end{aligned} 
\end{equation}
Combining \eqref{eq:T2A} and \eqref{eq:T2B} in \eqref{eq:T20} and taking $\epsilon = \frac{1}{2 C \gamma}$ we infer
\begin{equation}
\label{eq:T2}
T_2 \le 
\frac{1}{2}
(S(\widetilde{\b u}_h, \widetilde{\b \xi}_h) - 
S(\widetilde{\b u}_I, \widetilde{\b \xi}_h) ) + 
C h^{k_1r}  | \b u |^r_{\b W^{k_1+1,r}(\Omega_h)} \, .
\end{equation}

\noindent 
$\bullet$ \, Estimate of $T_3$. We infer
\begin{equation}
\label{eq:T3}
\begin{aligned}
    T_3 &= \sum_{E \in \Omega_h}\int_E (\Pi^{0,E}_k \b f - \b f) \cdot ({\b \xi}_h - \Pi^{0,E}_0 ({\b \xi}_h)) 
    & \quad & \text{(def. \eqref{eq:forma fh} \& def. \eqref{eq:P0_k^E})}
    \\
   & \le C \sum_{E\in \Omega_h} h_E^{k_3+2} |\b f |_{\b W^{k_3+1,r'}(E)} |{\b \xi}_h|_{\b W^{1,r}(E)} 
    & \quad & \text{($(r',r)$-H\"older ineq. \& Lemma \ref{lem:approx-interp})}
    \\
   &
   \le \frac{C}{\theta}  h^{2k_3+4} |\b f |_{\b W^{k_3+1,r'}(\Omega_h)}^2 + 
   \frac{\theta}{2} \Vert{\b \xi}_h \Vert_{\b W^{1,r}(\Omega)}^2 
    & \quad & \text{($(r',r)$-H\"older ineq. \& Young ineq.)}
\end{aligned}
\end{equation}
Plugging the estimates in \eqref{eq:T1}, \eqref{eq:T2} and \eqref{eq:T3} in \eqref{eq:conv0} and recalling definition \eqref{eq:forma ah} we obtain
\begin{equation}
\label{eq:upper}
\begin{aligned}
\frac{1}{2} &(a_h(\b u_h, \b \xi_h) - a_h(\b u_I, \b \xi_h))    
    \le  \\
&    C h^{k_1r}  | \b u |^r_{\b W^{k_1+1,r}(\Omega_h)} +
    \frac{C}{\theta}  h^{2k_2} |\b\sigma(\cdot, \b \epsilon(\b u))|_{\mathbb{W}^{k_2,r'}(\Omega_h)}^2 +
    \frac{C}{\theta}  h^{2k_3+4} |\b f |_{\b W^{k_3+1,r'}(\Omega_h)}^2 + 
   \theta \Vert{\b \xi}_h \Vert_{\b W^{1,r}(\Omega)}^2 
   \, .
\end{aligned}
\end{equation}
We now derive a lower bound for the left-hand side of the equation above. 
Note that, since from bounds \eqref{eq:a-priori_cont.u} and \eqref{eq:a-priori.disc.u}, estimate \eqref{falcao} clearly holds true if $\mathcal{N}(\b f, \b g)= 0$, it is not restrictive to assume $\mathcal{N}(\b f, \b g) > 0$.
Employing Lemma \ref{lem:mono_ah}, a triangular inequality, the stability bounds \eqref{eq:a-priori_cont.u} and \eqref{eq:a-priori.disc.u} and the interpolation bound in Lemma \ref{lem:approx-interp} we get
\begin{equation}
\label{eq:lower}
\begin{aligned}
\frac{1}{2} (a_h(\b u_h, \b \xi_h) - a_h(\b u_I, \b \xi_h)) & \ge
C \left(\|\b u_h\|^r_{\b W^{1,r}(\Omega)} +
    \| \b u_I\|^r_{\b W^{1,r}(\Omega)} \right)^{\frac{r-2}{r}} \, \Vert  \b \xi_h \Vert^2_{\b W^{1,r}(\Omega)}
\\
&\ge
C \left(\|\b u_h\|^r_{\b W^{1,r}(\Omega)} + \|\b u\|^r_{\b W^{1,r}(\Omega)} +  \|\b u -  \b u_I\|^r_{\b W^{1,r}(\Omega)} \right)^{\frac{r-2}{r}} \, \Vert  \b \xi_h \Vert^2_{\b W^{1,r}(\Omega)}
\\
& \ge
C \left(\mathcal{N}(\b f, \b g)^{\frac{r}{r-1}} +  h^{k_1r}|\b u|^r_{\b W^{k_1+1,r}(\Omega)} \right)^{\frac{r-2}{r}} \, \Vert  \b \xi_h \Vert^2_{\b W^{1,r}(\Omega)}
\\
& \ge
C \mathcal{N}(\b f, \b g)^{\frac{r-2}{r-1}}  \, \Vert  \b \xi_h \Vert^2_{\b W^{1,r}(\Omega)}
\end{aligned}
\end{equation}
where in the last inequality we assume that $h$ is sufficiently small.
Combining  \eqref{eq:upper} and \eqref{eq:lower} and taking $\theta= \frac{1}{2} C \mathcal{N}(\b f, \b g)^{\frac{r-2}{r-1}}$ we obtain:
\begin{equation}
\label{eq:pressF}
\Vert  \b \xi_h \Vert^2_{\b W^{1,r}(\Omega)} \lesssim
C(\mathcal{N}(\b f, \b g))\left( 
 h^{k_1r}  | \b u |^r_{\b W^{k_1+1,r}(\Omega_h)} +
    h^{2k_2} |\b\sigma(\cdot, \b \epsilon(\b u))|_{\mathbb{W}^{k_2,r'}(\Omega_h)}^2 +
      h^{2k_3+4} |\b f |_{\b W^{k_3+1,r'}(\Omega_h)}^2
\right) \,.
\end{equation}
The proof follows by the previous bound, a triangular inequality and Lemma \ref{lem:approx-interp}. 
\end{proof}

\begin{proposition}\label{prop:main:2} 
Let $(\b u, p)$ be the solution of problem \eqref{eq:stokes.weak} and $(\b u_h, p_h)$ the solution of problem \eqref{eq:stokes.vem}. 
Assume that $\b u \in \b W^{k_1+1,r}(\Omega_h)$, 
$\b\sigma(\cdot, \b \epsilon(\b u)) \in \mathbb{W}^{k_2,r'}(\Omega_h)$, 
$\b f \in \b W^{k_3+1,r'}(\Omega_h)$
$p \in  W^{k_4,r'}(\Omega_h)$, for some $k_1$, $k_2$, $k_3$, $k_4 \le k$. Let the mesh regularity  in Assumption~\ref{ass:mesh} hold. Then, we have
\begin{equation*}
    \|p - p_h\|_{L^{r'}(\Omega_h)} \lesssim 
    \Vert \b u - \b u_h\Vert_{\b W^{1,r}(\Omega_h)}^{2/r'} +
h^{k_1(r-1)} R_1^{r-1} + h^{k_2} R_2 +  h^{k_3+2} R_3 +
    h^{k_4} R_4  \,,
\end{equation*}
where the regularity term $R_1$, $R_2$, $R_3$ are defined in \eqref{eq:reg:terms} and
$R_4 := |p |_{W^{k_4,r'}(\Omega_h)}$.
\end{proposition}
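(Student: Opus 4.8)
The plan is to combine the discrete inf--sup condition of Lemma~\ref{inf-sup:vem} with a bound on the momentum residual, mimicking the Hilbertian argument but trading powers through the nonlinear pointwise estimates for $\b\sigma$. First I add and subtract $\Pi^0_{k-1}p\in\QDG$: by Lemma~\ref{lem:approx-interp} (scalar case), $\|p-\Pi^0_{k-1}p\|_{L^{r'}(\Omega_h)}\lesssim h^{k_4}R_4$, so it remains to bound $\|\Pi^0_{k-1}p-p_h\|_{L^{r'}(\Omega)}$. Since $\divs\b v_h$ is elementwise a polynomial of degree $k-1$ (property $(ii)$ in \eqref{eq:v-loc}) and $\Pi^0_{k-1}$ is the $L^2$-projection, one has $b(\b v_h,\Pi^0_{k-1}p-p)=-\int_\Omega(\divs\b v_h)(\Pi^0_{k-1}p-p)=0$ for every $\b v_h\in\VDG$; hence $b(\b v_h,\Pi^0_{k-1}p-p_h)=b(\b v_h,p-p_h)$, and Lemma~\ref{inf-sup:vem} reduces the problem to estimating $\sup_{\b v_h\in\VDG\setminus\{\b 0\}} b(\b v_h,p-p_h)/\|\b v_h\|_{\b W^{1,r}(\Omega)}$.

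Subtracting the discrete momentum equation \eqref{eq:stokes.vem} from the continuous one \eqref{eq:stokes.weak} tested against $\b v_h\in\VDG\subset\b U$ gives $b(\b v_h,p-p_h)=a_h(\b u_h,\b v_h)-a(\b u,\b v_h)+(\b f-\b f_h,\b v_h)$. The load term is treated exactly as $T_3$ in Proposition~\ref{prop:main}, using $\int_E(\b f-\Pi^0_k\b f)\cdot\Pi^{0,E}_0\b v_h=0$, H\"older's inequality and Lemma~\ref{lem:approx-interp}: $|(\b f-\b f_h,\b v_h)|\lesssim h^{k_3+2}R_3\|\b v_h\|_{\b W^{1,r}(\Omega)}$. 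For the remaining difference I split $a_h(\b u_h,\b v_h)-a(\b u,\b v_h)$ into its $\b\sigma$-part and the stabilization term $S(\widetilde{\b u}_h,\widetilde{\b v}_h)$, with $\widetilde{\b w}:=(I-\Pi^0_k)\b w$. By the same $\b\Pi^0_{k-1}$-orthogonality manipulations used for $T_1$ in Proposition~\ref{prop:main}, the $\b\sigma$-part equals $\int_\Omega(I-\b\Pi^0_{k-1})\b\sigma(\cdot,\b\epsilon(\b u)):\b\epsilon(\b v_h)+\int_\Omega\big(\b\sigma(\cdot,\b\Pi^0_{k-1}\b\epsilon(\b u_h))-\b\sigma(\cdot,\b\epsilon(\b u))\big):\b\Pi^0_{k-1}\b\epsilon(\b v_h)$; the first integral is $\lesssim h^{k_2}R_2\|\b v_h\|_{\b W^{1,r}(\Omega)}$ by Lemma~\ref{lem:approx-interp} and H\"older, and the second is $\lesssim \|\b\sigma(\cdot,\b\Pi^0_{k-1}\b\epsilon(\b u_h))-\b\sigma(\cdot,\b\epsilon(\b u))\|_{\mathbb L^{r'}(\Omega)}\|\b v_h\|_{\b W^{1,r}(\Omega)}$. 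The stabilization term is handled by inserting $\widetilde{\b u}_I$: the piece $S(\widetilde{\b u}_I,\widetilde{\b v}_h)=S(\widetilde{\b u}_I,\widetilde{\b v}_h)-S(\b 0,\widetilde{\b v}_h)$ is bounded via the H\"older continuity of $S$ (Lemma~\ref{Lemma:S_holderG}) and $\|\widetilde{\b u}_I\|_{\b W^{1,r}(\Omega_h)}\lesssim h^{k_1}R_1$ (stability of $\Pi^0_k$ plus Lemma~\ref{lem:approx-interp}), giving $\lesssim h^{k_1(r-1)}R_1^{r-1}\|\b v_h\|_{\b W^{1,r}(\Omega)}$, while the residual piece $S(\widetilde{\b u}_h,\widetilde{\b v}_h)-S(\widetilde{\b u}_I,\widetilde{\b v}_h)$ is controlled, via the monotonicity of $S$ (Lemma~\ref{lm:monotonicity:local_stab}) and the same pointwise trick as below, by $\|\b u-\b u_h\|_{\b W^{1,r}(\Omega_h)}^{2/r'}\|\b v_h\|_{\b W^{1,r}(\Omega)}$ up to data-dependent constants.

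The decisive step is the bound $\|\b\sigma(\cdot,\b\Pi^0_{k-1}\b\epsilon(\b u_h))-\b\sigma(\cdot,\b\epsilon(\b u))\|_{\mathbb L^{r'}(\Omega)}\lesssim \|\b u-\b u_h\|_{\b W^{1,r}(\Omega_h)}^{2/r'}+h^{k_1(r-1)}R_1^{r-1}+h^{k_2}R_2$. Passing from $\b\Pi^0_{k-1}\b\epsilon(\b u_h)$ to $\b\epsilon(\b u_h)$, via the H\"older continuity \eqref{eq:extrass.continuity} (with $\delta=0$), a triangle inequality through $\b\epsilon(\b u_I)$ and $\b\Pi^0_{k-1}\b\epsilon(\b u_I)$, the $L^r$-stability of $\b\Pi^0_{k-1}$ and Lemma~\ref{lem:approx-interp}, costs only $h^{k_1(r-1)}R_1^{r-1}$ (and, absorbing the $\b\sigma$-projection defect, $h^{k_2}R_2$). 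It remains to estimate $\|\b\sigma(\cdot,\b\epsilon(\b u))-\b\sigma(\cdot,\b\epsilon(\b u_h))\|_{\mathbb L^{r'}(\Omega)}$, for which I invoke the pointwise inequality --- valid when $\delta=0$ as a consequence of \eqref{eq:extrass.continuity}--\eqref{eq:extrass.monotonicity}, since $r'\ge 2$ and $|\b\tau-\b\eta|\le|\b\tau|+|\b\eta|$ --- namely $|\b\sigma(\cdot,\b\tau)-\b\sigma(\cdot,\b\eta)|^{r'}\lesssim\big(\b\sigma(\cdot,\b\tau)-\b\sigma(\cdot,\b\eta)\big):(\b\tau-\b\eta)$. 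Integrating over $\Omega$ with $\b\tau=\b\epsilon(\b u)$, $\b\eta=\b\epsilon(\b u_h)$ gives $\|\b\sigma(\cdot,\b\epsilon(\b u))-\b\sigma(\cdot,\b\epsilon(\b u_h))\|_{\mathbb L^{r'}(\Omega)}^{r'}\lesssim\int_\Omega\big(\b\sigma(\cdot,\b\epsilon(\b u))-\b\sigma(\cdot,\b\epsilon(\b u_h))\big):(\b\epsilon(\b u)-\b\epsilon(\b u_h))$; by the strong monotonicity \eqref{eq:a.monotonicity}, Korn's inequality \eqref{eq:Korn} and the a priori bounds on $\|\b u\|_{\b W^{1,r}}$, $\|\b u_h\|_{\b W^{1,r}}$, this natural distance is controlled by $\|\b u-\b u_h\|_{\b W^{1,r}(\Omega_h)}^2$ up to a constant depending only on $\mathcal N(\b f,\b g)$, so that raising to the power $1/r'$ yields the $\|\b u-\b u_h\|_{\b W^{1,r}(\Omega_h)}^{2/r'}$ term. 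Collecting all contributions, dividing by $\|\b v_h\|_{\b W^{1,r}(\Omega)}$, taking the supremum over $\b v_h\in\VDG$, and combining with Lemma~\ref{inf-sup:vem} and the triangle inequality with $\Pi^0_{k-1}p$ completes the proof.

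The main obstacle is precisely this last step: in the non-Hilbertian setting the classical ``$\|p-p_h\|\lesssim$ velocity error'' bound is unavailable, and one must reroute through the nonlinear pointwise estimate for $\b\sigma$, which produces the velocity error with the exponent $2/r'$ instead of $1$. The book-keeping is delicate because $\b\sigma$ is applied to the projected strain $\b\Pi^0_{k-1}\b\epsilon(\b u_h)$ (and the stabilization acts on $(I-\Pi^0_k)\b u_h$), so each polynomial-consistency defect must be peeled off --- to the rates $h^{k_2}R_2$, $h^{k_3+2}R_3$, $h^{k_4}R_4$ and $h^{k_1(r-1)}R_1^{r-1}$ --- before the genuinely nonlinear remainder can be identified with the natural distance; the analogous ``sharpened'' continuity of the stabilization form $S$, obtained by interpolating between its monotonicity (Lemma~\ref{lm:monotonicity:local_stab}) and its H\"older continuity (Lemma~\ref{Lemma:S_holderG}), is the other technical ingredient needed to keep the stabilization contribution at the same order.
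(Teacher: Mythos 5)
Your overall architecture coincides with the paper's: you insert $p_I=\Pi^0_{k-1}p$, use the $L^2$-orthogonality of $\divs\b v_h$ and the discrete inf--sup of Lemma~\ref{inf-sup:vem}, peel off the consistency defects $h^{k_2}R_2$, $h^{k_3+2}R_3$, $h^{k_4}R_4$, and reduce the remaining work to bounding $\|\b\sigma(\cdot,\b\Pi^0_{k-1}\b\epsilon(\b u_h))-\b\sigma(\cdot,\b\epsilon(\b u))\|_{\mathbb L^{r'}(\Omega)}$ via the pointwise inequality $|\b\sigma(\cdot,\b\tau)-\b\sigma(\cdot,\b\eta)|^{r'}\lesssim(\b\sigma(\cdot,\b\tau)-\b\sigma(\cdot,\b\eta)):(\b\tau-\b\eta)$, which is indeed valid for $\delta=0$ and is exactly the computation in Lemma~\ref{Lemma:A}. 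The problem lies in your decisive step.

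You claim that the natural distance $\int_\Omega(\b\sigma(\cdot,\b\epsilon(\b u))-\b\sigma(\cdot,\b\epsilon(\b u_h))):(\b\epsilon(\b u)-\b\epsilon(\b u_h))$ is controlled by $\|\b u-\b u_h\|_{\b W^{1,r}(\Omega)}^2$ ``by the strong monotonicity \eqref{eq:a.monotonicity}''. This is the inequality in the wrong direction: strong monotonicity is a \emph{lower} bound on the natural distance by $C(\mathcal N)\|\b u-\b u_h\|^2$, not an upper bound. The only generic upper bound available is pointwise $(|\b\tau|+|\b\eta|)^{r-2}|\b\tau-\b\eta|^2\le|\b\tau-\b\eta|^{r}$ (since $r-2\le0$), which after integration and taking the $r'$-th root yields $\|\b\sigma(\cdot,\b\epsilon(\b u))-\b\sigma(\cdot,\b\epsilon(\b u_h))\|_{\mathbb L^{r'}(\Omega)}\lesssim\|\b u-\b u_h\|_{\b W^{1,r}(\Omega)}^{r/r'}=\|\b u-\b u_h\|_{\b W^{1,r}(\Omega)}^{r-1}$; since $r-1<2/r'$ for $r<2$, this gives a strictly weaker estimate than the stated one (it would only prove the pressure rate $h^{k r(r-1)/2}$ rather than $h^{k(r-1)}$ under full regularity). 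The exponent $2/r'$ cannot be extracted from continuity or monotonicity of $\b\sigma$ alone: the paper obtains it by \emph{not} converting the natural distance into a norm of $\b u-\b u_h$, but instead reducing it (Lemma~\ref{Lemma:A}, and Lemma~\ref{Lemma:B} for the stabilization part) to the discrete residual $(\b\sigma(\cdot,\b\Pi^0_{k-1}\b\epsilon(\b u_h))-\b\sigma(\cdot,\b\Pi^0_{k-1}\b\epsilon(\b u_I)),\b\Pi^0_{k-1}\b\epsilon(\b\xi_h))+S(\widetilde{\b u}_h,\widetilde{\b\xi}_h)-S(\widetilde{\b u}_I,\widetilde{\b\xi}_h)=a_h(\b u_h,\b\xi_h)-a_h(\b u_I,\b\xi_h)$ with $\b\xi_h=\b u_h-\b u_I$ a discrete kernel function, and then invoking the \emph{upper} bound \eqref{eq:upper} on this quantity, which was derived in the proof of Proposition~\ref{prop:main} from the error (Galerkin-orthogonality) equation \eqref{eq:conv0}. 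It is that error-equation bound, $a_h(\b u_h,\b\xi_h)-a_h(\b u_I,\b\xi_h)\lesssim\theta\|\b\xi_h\|^2+\text{data terms}$, that produces the square of the velocity error. Your treatment of the residual stabilization piece $S(\widetilde{\b u}_h,\widetilde{\b v}_h)-S(\widetilde{\b u}_I,\widetilde{\b v}_h)$ suffers from the same gap, since the monotonicity of $S$ is likewise a lower bound. To repair the proof you must work with $\b\xi_h$ rather than $\b u-\b u_h$ and pass through \eqref{eq:upper}.
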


\begin{proof}
Let $p_I := \Pi^0_{k-1} p \in \Pk_{k-1}(\Omega_h)$  and let $\rho_h:=p_h-p_I$. Employing  \eqref{eq:stokes.weak} and  \eqref{eq:stokes.vem}, recalling the definition of the form $b(\cdot, \cdot)$ in \eqref{eq:a.b} and combining item $(ii)$ in \eqref{eq:v-loc} with the definition of $L^2$-projection, for all $\b w_h\in \b U_h$ we get
   \begin{equation}
   \label{eq:press0}
   \begin{aligned}
   b(\b w_h,\rho_h) &=a_h(\b u_h, \b w_h) -  (\b f_h,\b w_h)
   -a(\b u, \b w_h)  + (\b f,\b w_h) + b(\b w_h,p-p_I) & &
   \\
   & =a_h(\b u_h, \b w_h)-  a(\b u, \b w_h) + (\b f -\b f_h,\b w_h)
   \\
   & = \int_\Omega \left((\b \Pi^0_{k-1} - I)\b\sigma(\cdot, \b \epsilon(\b u)) \right): \b\epsilon( \b \w_h) +   (\b f -\b f_h,\b w_h) + 
       \\
      & + \int_\Omega \left(\b\sigma(\cdot, \b \Pi^0_{k-1}\b \epsilon(\b u_h)) -
        \b\sigma(\cdot, \b \epsilon(\b u))
        \right): \b \Pi^0_{k-1} \b\epsilon( \b \w_h) +
       S(\widetilde {\b u_h},\widetilde {\b w_h})  
    \\
    & =: T_1 + T_2 + T_3 + T_4 \,.
   \end{aligned}
   \end{equation}
We estimate separately each term in \eqref{eq:press0}.
Employing the H\"older inequality with exponents $(r',r)$ and polynomial approximation properties  we infer
   \begin{equation}
   \label{eq:pressT1-2}
   \begin{aligned}
       T_1  &\lesssim 
       \| (\b \Pi^0_{k-1} - I)\b\sigma(\cdot, \b \epsilon(\b u))\|_{\mathbb L^{r'}(\Omega_h)} \, \|\b \epsilon(\b w_h)\|_{\mathbb L^r(\Omega_h)}
       \lesssim  h^{k_2} | \b\sigma(\cdot, \b \epsilon(\b u))|_{\mathbb W^{k_2,r'}(\Omega_h)} \, \|\b w_h\|_{\b W^{1,r}(\Omega_h)} \,,
   \\
   T_2 &= (\b f - \b f_h, \b w_h - \Pi^0_0 \b w_h) 
   \lesssim h^{k_3+2}\vert {\bf f} \vert_{\b W^{k_3+1,r'}(\Omega_h)} \, \|{\b w}_h\|_{W^{1,r}(\Omega_h)}   \,.
   \end{aligned}
   \end{equation}
   Employing the H\"older inequality with exponents $(r',r)$, the $ W^{1,r}$-stability of $\b \Pi^0_{k-1}$ and Lemma \ref{Lemma:A} in the Appendix, we have
   \begin{equation}
   \label{eq:pressT3}
   \begin{aligned}
       T_3^{r'} &\lesssim
       \| \b\sigma(\cdot, \b \Pi^0_{k-1} \b \epsilon(\b u_h)) - \b\sigma(\cdot, \b \epsilon(\b u))\|_{\mathbb L^{r'}(\Omega_h)}^{r'} \, \|\b \Pi^0_{k-1}  \b \epsilon(\b w_h)\|_{\mathbb L^r(\Omega_h)}^{r'}
      \\
      & \lesssim
      \left( (\b\sigma(\cdot, \b\Pi_{k-1}^0 \b \epsilon(\b u_h))- \b\sigma(\cdot, \b\Pi_{k-1}^0 \b \epsilon(\b u_I)), \b\Pi_{k-1}^0 \b\epsilon(\b \xi_h))  + h^{k_1r} \vert \b u\vert_{\b W^{k_1+1,r}(\Omega_h)}^r \right) 
      \|\b \w_h\|_{\b W^{1,r}(\Omega_h)}^{r'}
   \end{aligned}    
   \end{equation}
   where $\b \xi_h = \b u_h - \b u_I$ (cf. proof of Proposition \ref{prop:main}). 
Employing Lemma \ref{Lemma:S_holderG}, Corollary \ref{Cor:vemstab},  the $W^{1,r}$-stability of $\Pi^0_{k-1}$
and finally Lemma \ref{Lemma:B} in the Appendix we obtain 
\begin{equation}
\label{eq:pressT4}
\begin{aligned}
    T_4^{r'} &\lesssim \|\widetilde {\b u}_h \|^{r}_{\b W^{1,r}(\Omega_h)}
    \|{\widetilde{\b w}}_h\|_{\b W^{1,r}(\Omega_h)}^{r'}
    \lesssim S(\widetilde {\b u}_h, \widetilde {\b u}_h)
    \|\b \w_h\|_{\b W^{1,r}(\Omega_h)}^{r'}
    \\
    & \lesssim
    \left( S(\widetilde {\b u}_h, \widetilde {\b \xi}_h)  -  
    S(\widetilde {\b u}_I, \widetilde {\b \xi}_h) + 
    h^{k_1r} \vert \b u\vert_{\b W^{k_1+1,r}(\Omega_h)}^r \right)
     \|\b w_h\|_{W^{1,r}(\Omega_h)}^{r'} \,.
\end{aligned}    
\end{equation}
Therefore recalling the definition of $a_h(\cdot, \cdot)$ in \eqref{eq:forma ah}, from \eqref{eq:pressT3} and \eqref{eq:pressT4}, and recalling \eqref{eq:upper} and \eqref{eq:pressF}, we have
\begin{equation}
\label{eq:pressT3-4}
\begin{aligned}
T_3^{r'} + T_4^{r'} &\lesssim 
\left(a_h(\b u_h, \b \xi_h)  - a_h(\b u_I, \b \xi_h) + h^{k_1r} \vert \b u\vert_{\b W^{k_1+1,r}(\Omega_h)}^r \right) 
     \|\b w_h\|_{W^{1,r}(\Omega_h)}^{r'}
\\
& \lesssim
\left(\Vert \b \xi_h\Vert_{\b W^{1,r}(\Omega_h)}^2 + h^{k_1r} \vert \b u\vert_{\b W^{k_1+1,r}(\Omega_h)}^r \right) 
     \|\b w_h\|_{W^{1,r}(\Omega_h)}^{r'} 
\\
& \lesssim
\left(\Vert \b u - \b u_h\Vert_{\b W^{1,r}(\Omega_h)}^2 + h^{2k_1} \vert \b u\vert_{\b W^{k_1+1,r}(\Omega_h)}^2
+ h^{k_1r} \vert \b u\vert_{\b W^{k_1+1,r}(\Omega_h)}^r \right)  
     \|\b w_h\|_{W^{1,r}(\Omega_h)}^{r'}  \,.
\end{aligned}     
\end{equation}
Collecting \eqref{eq:pressT1-2} and \eqref{eq:pressT3-4} in \eqref{eq:press0} we obtain
\[
b(\b w_h, \rho_h) \lesssim 
\left( \Vert \b u - \b u_h\Vert_{\b W^{1,r}(\Omega_h)}^{2/r'} +
h^{k_1(r-1)} R_1^{r-1} + h^{k_2} R_2 +  h^{k_3+2} R_3 
\right) 
     \|\b w_h\|_{W^{1,r}(\Omega_h)}
\quad \text{$\forall  \b w_h \in \VDG$.}     
\]
 Employing the discrete inf-sup (Lemma \ref{inf-sup:vem}), from the equation above we have
   \[
   \begin{aligned}
   \|\rho_h\|_{L^{r'}(\Omega_h)}
    &   \le \frac{1}{\bar{\beta}(r)}
       \sup_{\b w_h \in \VDG}
       \frac{b(\b w_h,\rho_h)}{\|\b w_h\|_{W^{1,r}(\Omega_h)}}
       \\
   &    \lesssim \frac{1}{\bar{\beta}(r)} 
       \left( \Vert \b u - \b u_h\Vert_{\b W^{1,r}(\Omega_h)}^{2/r'} +
h^{k_1(r-1)} R_1^{r-1} + h^{k_2} R_2 +  h^{k_3+2} R_3 
\right) 
       \,.
   \end{aligned}    
   \]
Now the thesis follows from triangular inequality and standard polynomial approximation property.
\end{proof}

\begin{remark}{(Rates of convergence)}
\label{rm:reg}
Notice that, assuming in Proposition \ref{prop:main} and Proposition \ref{prop:main:2} maximum regularity for the solution and for the data (that is $k_i=k$ for $i=1,\dots, 4$) we recover the following asymptotic behaviour 
\[
\Vert \b u - \b u_h\Vert_{\b W^{1,r}(\Omega_h)} \lesssim h^{kr/2} \,,
\qquad
\Vert p - p_h\Vert_{L^{r'}(\Omega_h)} \lesssim h^{k(r-1)} \,
\]
that are in agreement with the classical results presented, e.g., in \cite{Barrett.Liu:94} in the context of finite elements. 
\end{remark}


\section{Numerical results}\label{sec:numerics}

In this section, we present two sets of numerical experiments that corroborate our theoretical findings and test the practical performances of the proposed scheme \eqref{eq:stokes.vem} in various scenarios. In particular, despite not covered by our theory, we will also explore the case $\delta\not=0$ in \eqref{eq:Carreau}.
\subsection{Fixed-point iteration}
\label{sub:fixpoint}
We first describe the linearization strategy based on a fixed-point 
iteration adopted to solve the non-linear Stokes equation.
We consider the Carreau--Yasuda model \eqref{eq:Carreau} and for any $r \in(1,2]$ we introduce the following notation:
\[
\begin{aligned}
\b{\sigma}_r(\b{x}, \GRADs(\b{z}); \GRADs(\b{v})) &:= 
\mu(\b x) (\delta^\alpha + |\GRADs(\b{z})|^\alpha)^{\frac{r-2}\alpha} \GRADs(\b{v}) 
&\quad &\text{for all $\b{z}$, $\b{v} \in \b U$}\,,
\\
a_{r}(\b z; \b v,  \b w) &:= 
\int_{\Omega}\b{\sigma}_r(\cdot,  \GRADs(\b{z}); \GRADs(\b{v})) : \GRADs(\b{w}) 
&\quad &\text{for all $\b z$, $\b v$, $\b w \in \b U$} \,,
\end{aligned}
\]
furthermore $a_{h, r}(\cdot; \cdot,  \cdot)$ will denote the VEM counterpart of $a_{r}(\cdot; \cdot,  \cdot)$ obtained generalizing in the linear setting the construction in Subsection \ref{sub:forms}.
Consider the discrete Problem \eqref{eq:stokes.vem} associated with the Carreau--Yasuda model with parameter $r \in (1, 2]$.
Then, in the light of the notation introduced above, Problem \eqref{eq:stokes.vem} can be formulated as follows:
find $(\b u_h, p_h) \in \VDG \times \QDG$ such that
\begin{equation}\label{eq:stokesL.vem}
  \begin{aligned}
     a_{r,h}(\b u_h; \b u_h,\b v_h)+b(\b v_h, p_h) &= \int_\Omega \b f_h \cdot \b v_h + \int_{\Gamma_N} \b g \cdot \b\gamma(\b v_h)&\qquad \forall \b v_h \in \VDG , \\
     -b(\b u_h, q_h) &= 0 &\qquad \forall q_h \in \QDG \,. 
  \end{aligned}
\end{equation}
To solve the non-linear equation \eqref{eq:stokesL.vem} we adopt the following strategy:

\vspace{0.2cm}

\noindent
\texttt{STEP 1.} Let $(\b u^{\rm S}_h, \, p^{\rm S}_h)$ be the solution of the linear Stokes equation associated with \eqref{eq:stokesL.vem} (corresponding to $r=2$).

\noindent
Let $\overline{r} := \frac{r+2}{2}$.
Starting from $(\b u^0_h, \, p^0_h) =(\b u^{\rm S}_h, \, p^{\rm S}_h)$, for $n \geq 0$, until convergence solve 
\[
  \begin{aligned}
     a_{\overline{r},h}(\b u_h^{n}; \b u_h^{n+1},\b v_h)+b(\b v_h, p_h^{n+1}) &= \int_\Omega \b f_h \cdot \b v_h + \int_{\Gamma_N} \b g \cdot \b\gamma(\b v_h)&\quad \forall \b v_h \in \VDG , \\
     -b(\b u_h^{n+1}, q_h) &= 0 &\quad \forall q_h \in \QDG \,.
  \end{aligned}
\]

\noindent
\texttt{STEP 2.} Let $(\b u^{\overline{r}}_h, \, p^{\overline{r}}_h)$ be the solution obtained in \texttt{STEP 1}.

\noindent
Starting from $(\b u^0_h, \, p^0_h) =(\b u^{\overline{r}}_h, \, p^{\overline{r}}_h)$, for $n \geq 0$, until convergence solve 
\[
  \begin{aligned}
     a_{r,h}(\b u_h^{n}; \b u_h^{n+1},\b v_h)+b(\b v_h, p_h^{n+1}) &= \int_\Omega \b f_h \cdot \b v_h + \int_{\Gamma_N} \b g \cdot \b\gamma(\b v_h)&\quad \forall \b v_h \in \VDG , \\
     -b(\b u_h^{n+1}, q_h) &= 0 &\quad \forall q_h \in \QDG \,.  
  \end{aligned}
\]

\subsection{Error quantities}
\label{sub:error}

To compute the VEM error between the exact solution $\b (u_{\rm ex}, p_{\rm ex})$ and the VEM
solution $(\b u_h, p_h)$ we consider the computable error quantities 
\[
\begin{aligned}
\texttt{err}(\b u_h, W^{1,r}) &:=
\frac{\|\GRAD \b u_{\rm ex} - \b \Pi^{0}_{k-1} \GRAD \b u_h\|_{L^r(\Omega)}}{\|\GRAD \b u_{\rm ex}\|_{\mathbb{L}^r(\Omega)}}  \,,
\\
\texttt{err}(p_h, L^{r'}) &:= \frac{\|p_{\rm ex} - p_h\|_{L^{r'}(\Omega)}}
{\|p_{\rm ex}\|_{L^{r'}(\Omega)}}
\\
\texttt{err}(\b \sigma, L^{r'}) &:=
\frac{\|\b\sigma(\cdot, \b \epsilon(\b u_{\rm ex})) - \b\sigma(\cdot, \b\Pi_{k-1}^0 \b \epsilon(\b u_h))\|_{L^{r'}(\Omega)}}
{\|\b\sigma(\cdot, \b \epsilon(\b u_{\rm ex}))\|_{\mathbb{L}^{r'}(\Omega)}}  \, .
\end{aligned}
\]

 In particular, we will compare the observed experimental convergence rates with the ones expected from the theory. In this last respect, we refer, for the case $\delta=0$,  to Propositions \ref{prop:main} and  \ref{prop:main:2} (see also Remark \ref{rm:reg}), while  we expect, for the case $\delta\not=0$,   optimal convergence rates (cf. \cite[Theorem 4.1]{Barrett.Liu:94}), i.e. 
\[
\Vert \b u - \b u_h\Vert_{\b W^{1,r}(\Omega_h)} \lesssim h^{k} \,,
\qquad
\Vert p - p_h\Vert_{L^{r'}(\Omega_h)} \lesssim h^{k}.
\]

In the forthcoming tests, we consider $k=2$.
\begin{figure}
\centering
\begin{overpic}[scale=0.23]{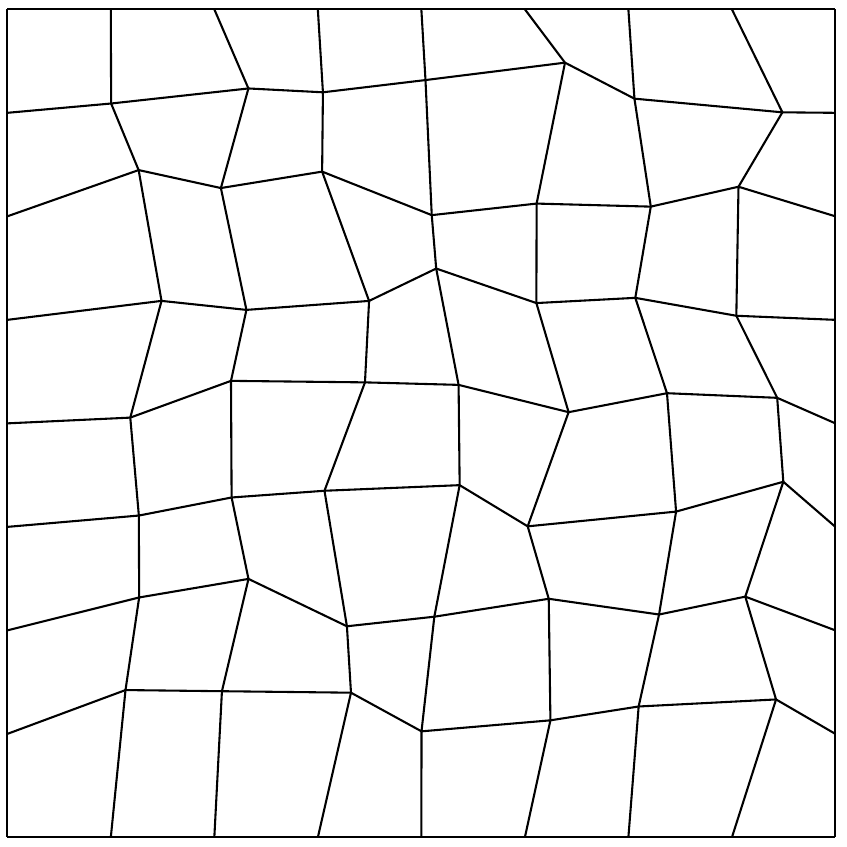}
\put(10,-15){{{\texttt{QUADRILATERAL}}}}
\end{overpic}
\qquad
\begin{overpic}[scale=0.23]{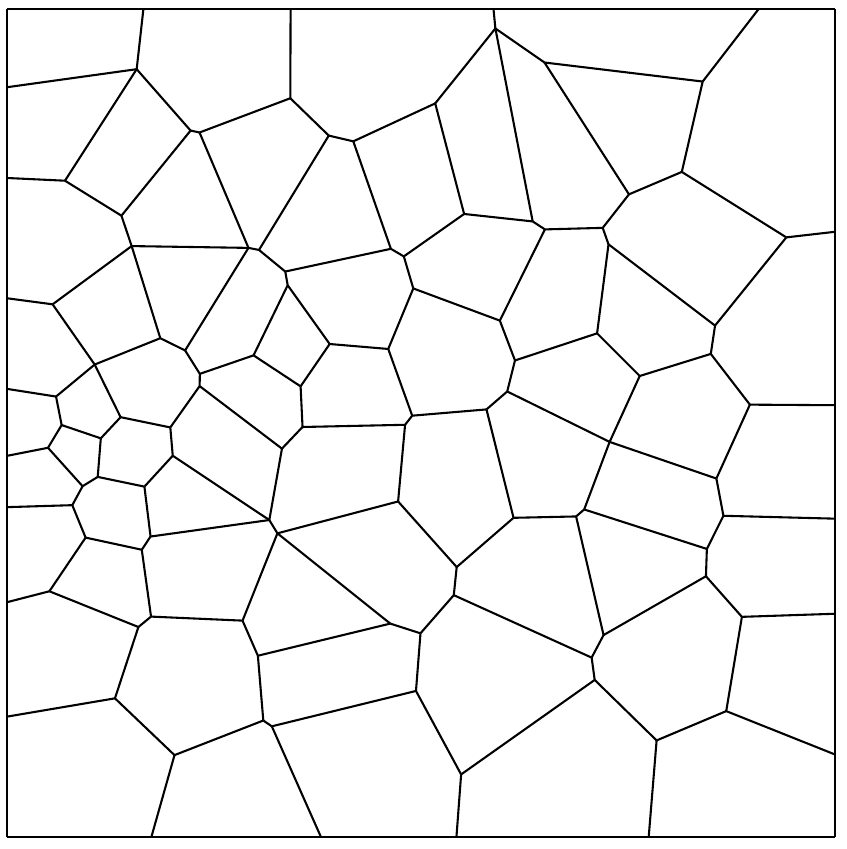}
\put(30,-15){{{\texttt{RANDOM}}}}
\end{overpic}
\qquad
\begin{overpic}[scale=0.20]{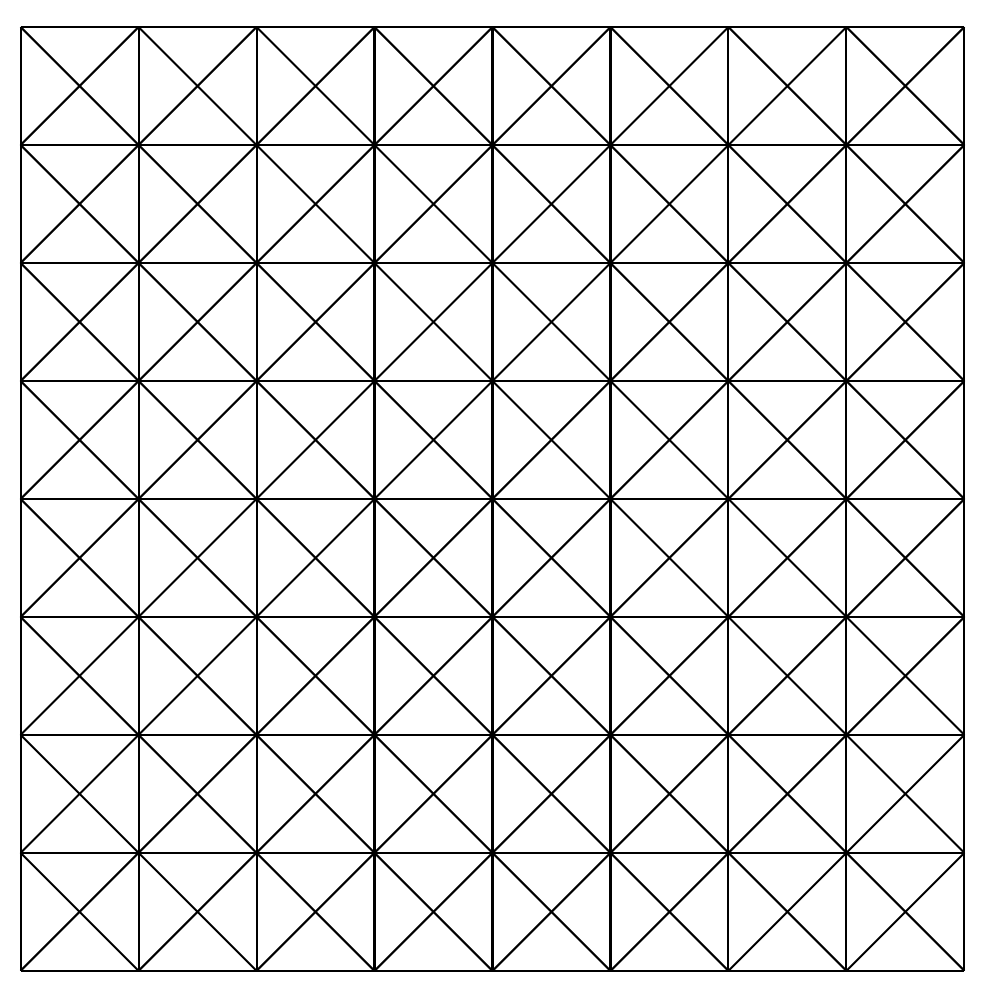}
\put(20,-15){{{\texttt{TRIANGULAR}}}}
\end{overpic}
\vspace{0.5cm}
\caption{Example of the adopted polygonal meshes.}
\label{fig:meshes}
\end{figure}

\paragraph{\textbf{Test 1. Performance w.r.t. $\texttt{r}$ and $\delta$.}}
In this test we examine the performance and the convergence properties of the proposed scheme \eqref{eq:stokes.vem} in the light of Proposition \ref{prop:main} and Proposition \ref{prop:main:2}.

\noindent
The aim of this test is to check the actual performance of the virtual element method assuming a Carreau--Yasuda model for different values of $r$ and $\delta$ in \eqref{eq:Carreau}. For simplicity we take $\mu=1$ and $\alpha=2$.
We consider Problem \eqref{eq:stokes.continuous} with full Dirichlet boundary conditions (i.e. $\Gamma_D = \partial \Omega$) on the unit square $\Omega = (0, 1)^2$. The load terms $\b f$ (depending on $r$ and $\delta$ in \eqref{eq:Carreau})  and the Dirichlet boundary conditions are chosen according to the analytical solution
\[
\b u_{\rm ex}(x_1,x_2) = 
\begin{bmatrix}
\sin\bigl(\frac{\pi}{2}x_1\bigr)\cos\bigl(\frac{\pi}{2}x_2\bigr)
\\
-\cos\bigl(\frac{\pi}{2}x_1\bigr)\sin\bigl(\frac{\pi}{2}x_2\bigr)
\end{bmatrix} \,,
\qquad
p_{\rm ex}(x_1,x_2) =  -\sin\biggl(\frac{\pi}{2}x_1\biggr)
\sin\biggl(\frac{\pi}{2}x_2\biggr) + \frac{4}{\pi^2} \,.
\]
The domain $\Omega$ is partitioned with the following sequences of polygonal meshes:
\texttt{QUADRILATERAL} distorted meshes and
\texttt{RANDOM} Voronoi meshes (see Fig. \ref{fig:meshes}).
For the generation of the Voronoi meshes we used the code \texttt{Polymesher} \cite{polymesher}.
For each family of meshes, we take the sequence with diameter $\texttt{h} =\texttt{1/4}$, $\texttt{1/8}$, $\texttt{1/16}$, $\texttt{1/32}$, $\texttt{1/64}$.
To highlight the behavior of the method for different situations we consider the following cases
\[
\texttt{r}=
\texttt{1.10}, \,
\texttt{1.15}, \,
\texttt{1.25}, \,
\texttt{1.50}, \,
\texttt{1.75}, \,
\texttt{2.00}, 
\qquad 
\delta=
\texttt{1}, \,
\texttt{0} \,.
\]

In Fig. \ref{tab:Q} and Fig. \ref{tab:V} we display the errors 
$\texttt{err}(\b u_h, W^{1,r})$ (first row),
$\texttt{err}(p_h, L^{r'})$  (second row), 
$\texttt{err}(\b \sigma, L^{r'})$  (third row),
 introduced in Subsection \ref{sub:error} for the sequences of aforementioned meshes. 
We notice that the theoretical predictions are confirmed:
for $\delta=\texttt{1}$ we recover the optimal rate of convergence, i.e. order \texttt{2}, whereas for $\delta=\texttt{0}$, we report on the plot the averaged order of convergence that is in agreement with the theory.

Furthermore, we display the number of iterations for the fixed-point procedure described in Subsection \ref{sub:fixpoint} where   $\texttt{N\_1|N\_2}$ denotes $\texttt{N\_1}$ iterations for \texttt{STEP 1} and $\texttt{N\_2}$ iterations for \texttt{STEP 2} are needed.
We observe, as expected, that smaller values of $\texttt{r}$ correspond to larger numbers of iterations, especially for $\delta=\texttt{0}$.

\begin{table}[!ht]
\centering
\begin{tabular}{cc}
\multicolumn{2}{c}
{\texttt{QUADRILATERAL MESHES}}
\\
 $\delta=\texttt{1}$
&$\delta=\texttt{0}$
\\
 \includegraphics[scale=0.2]{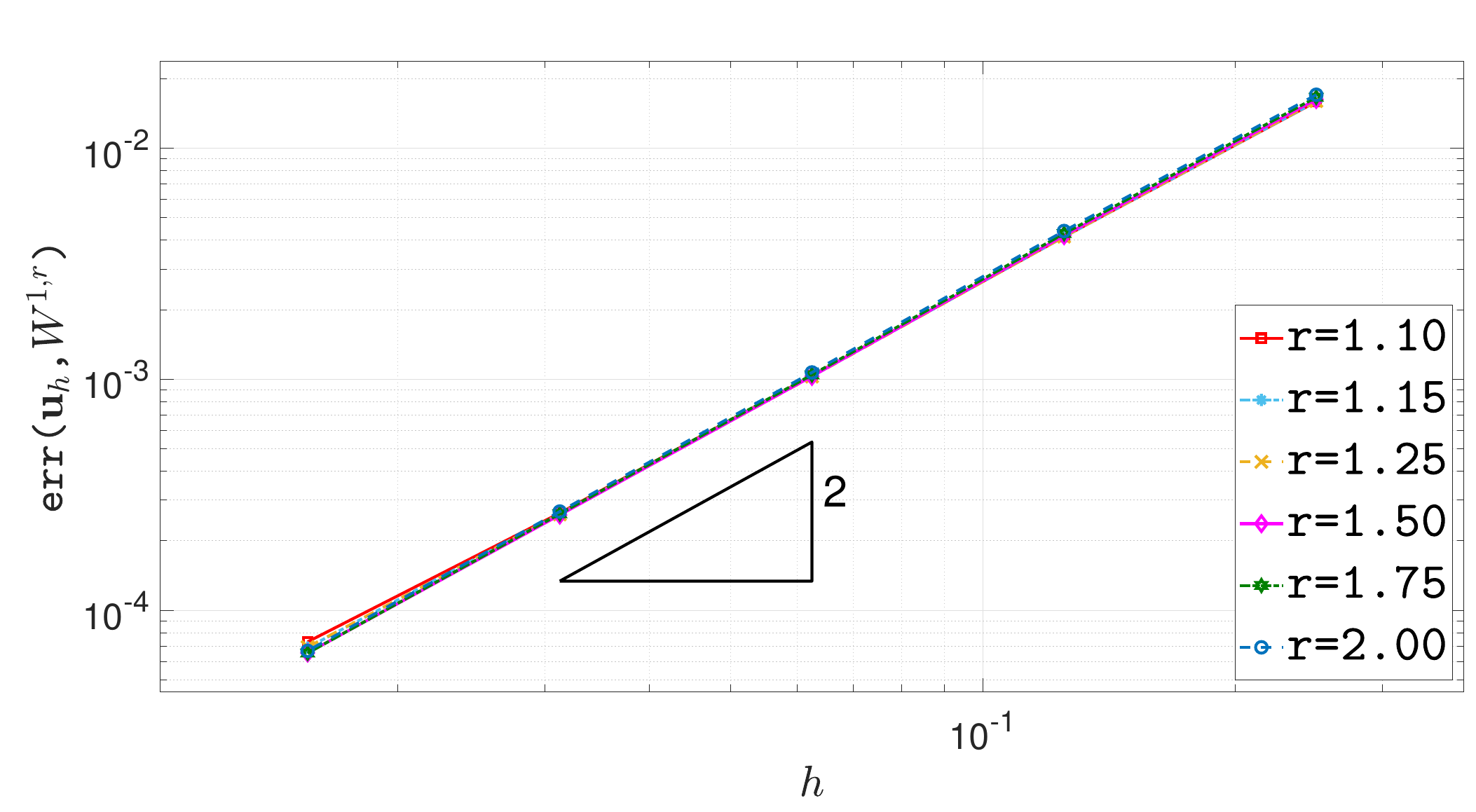}  
&\includegraphics[scale=0.2]{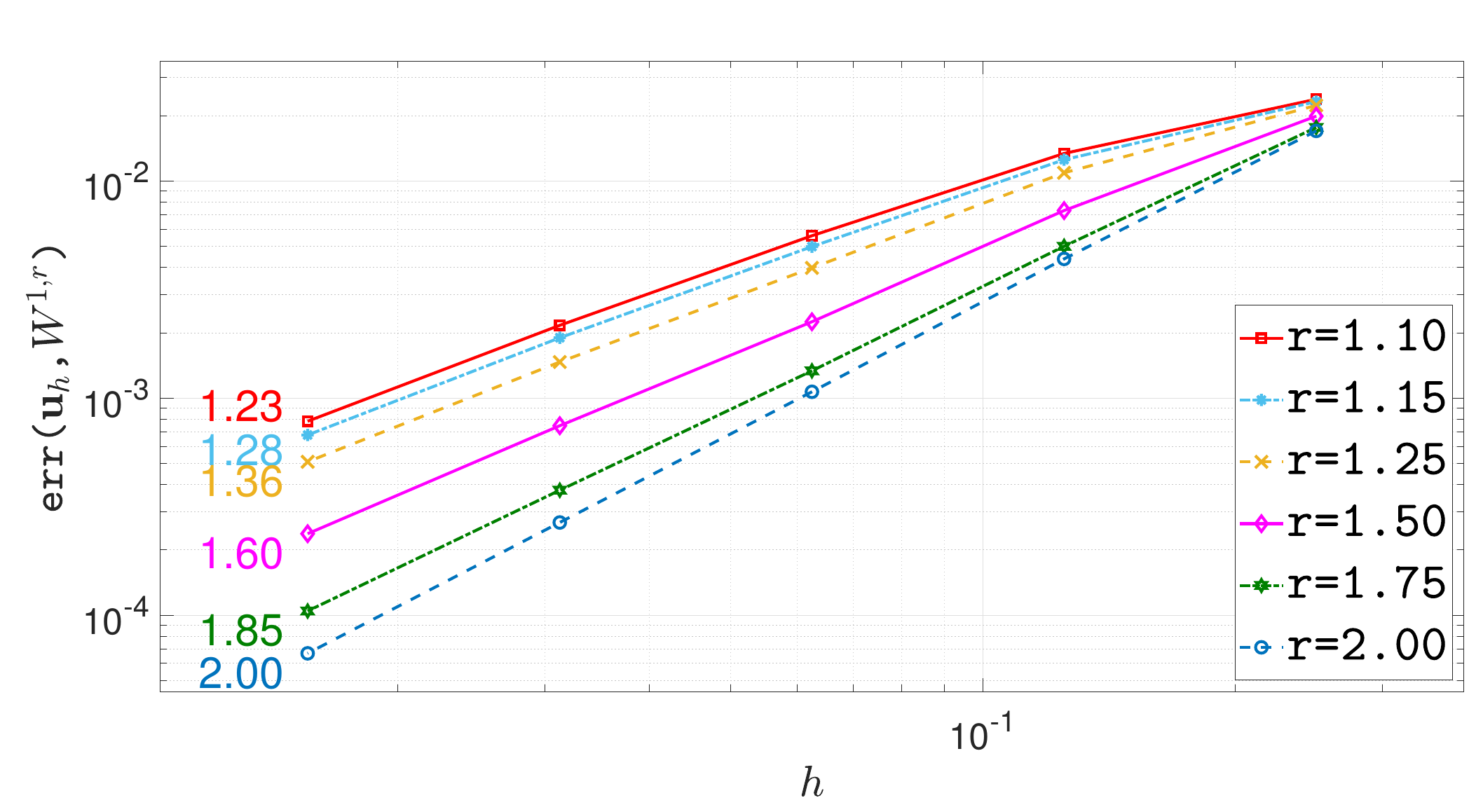}                        
\\
 \includegraphics[scale=0.2]{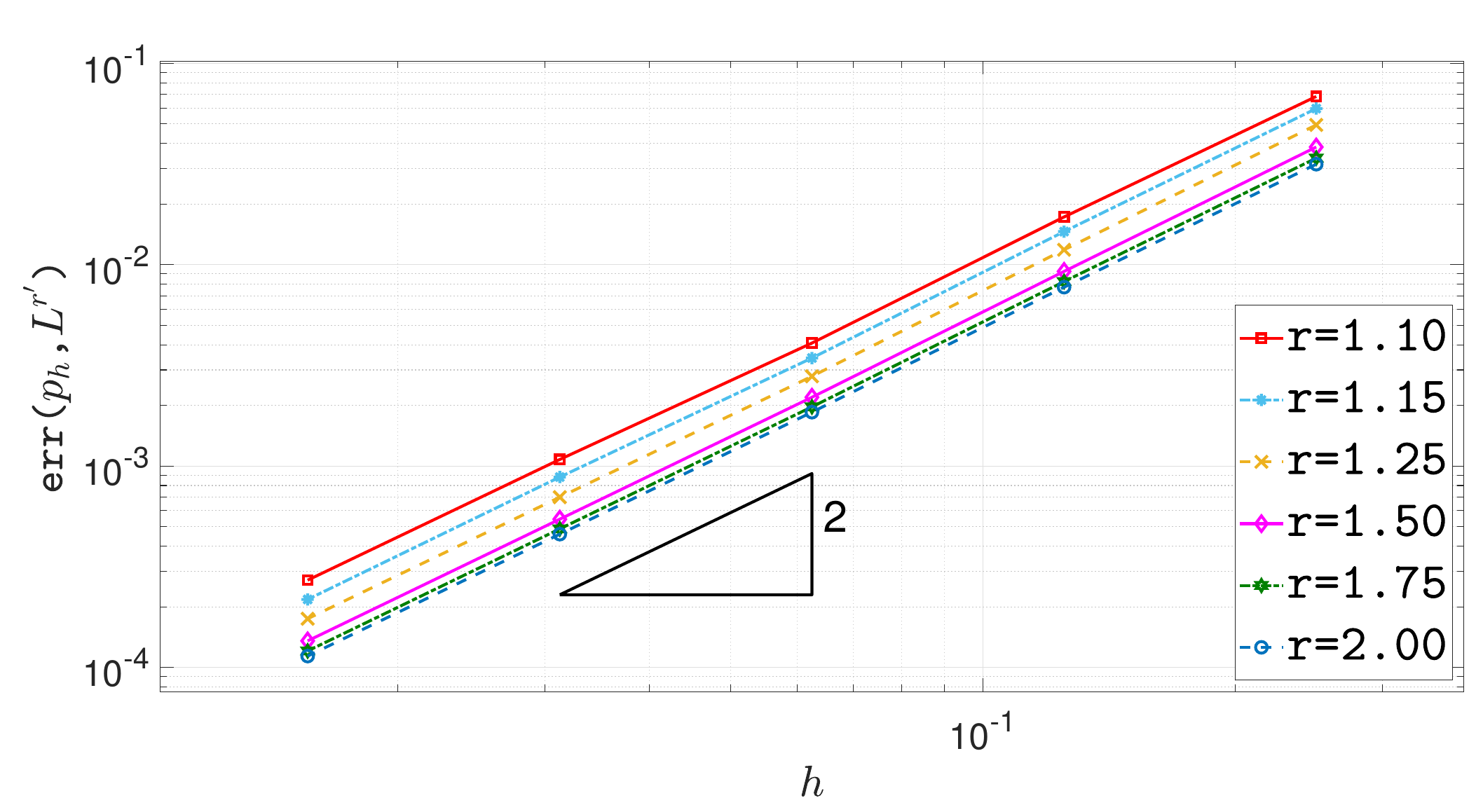}  
&\includegraphics[scale=0.2]{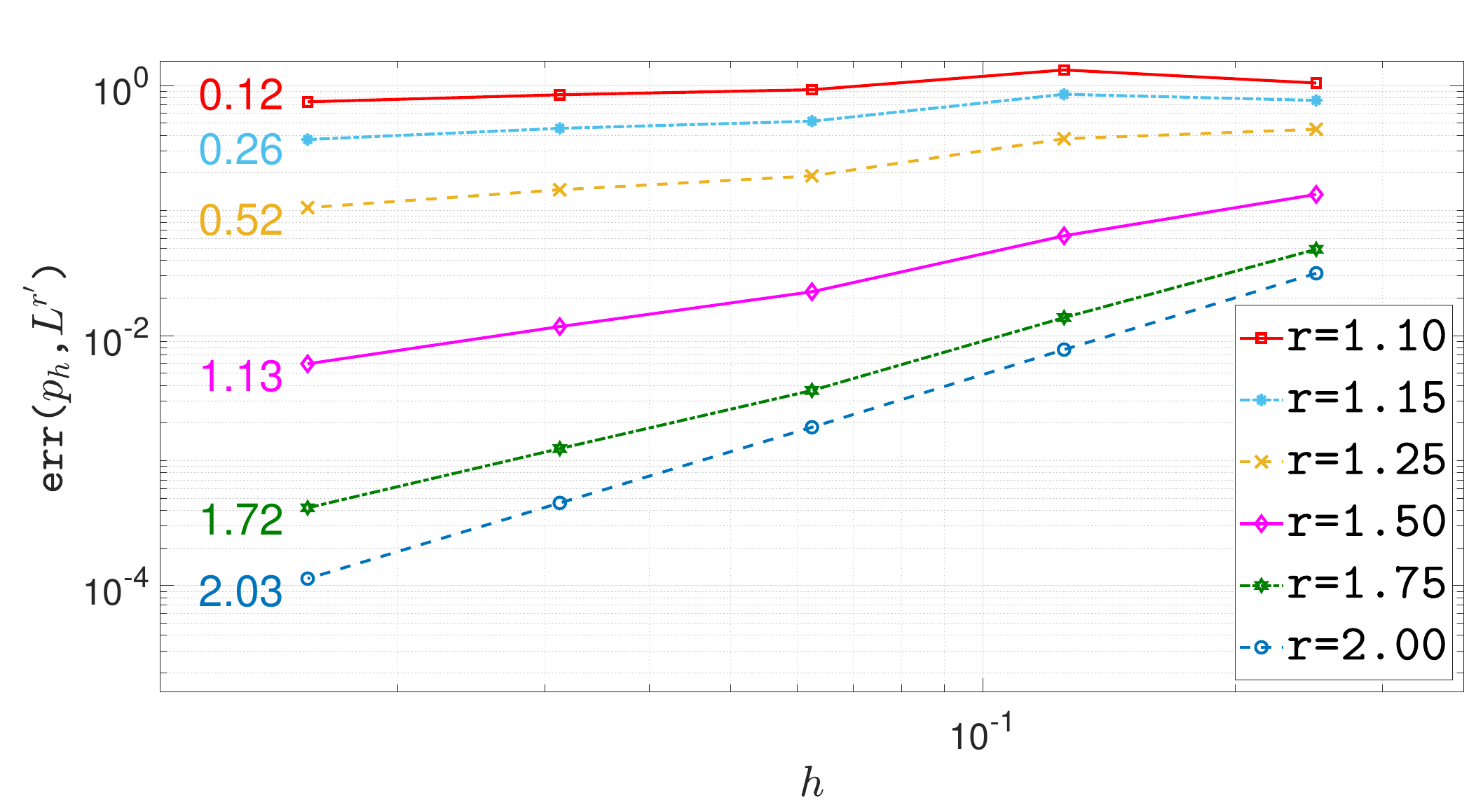}                        
\\
 \includegraphics[scale=0.2]{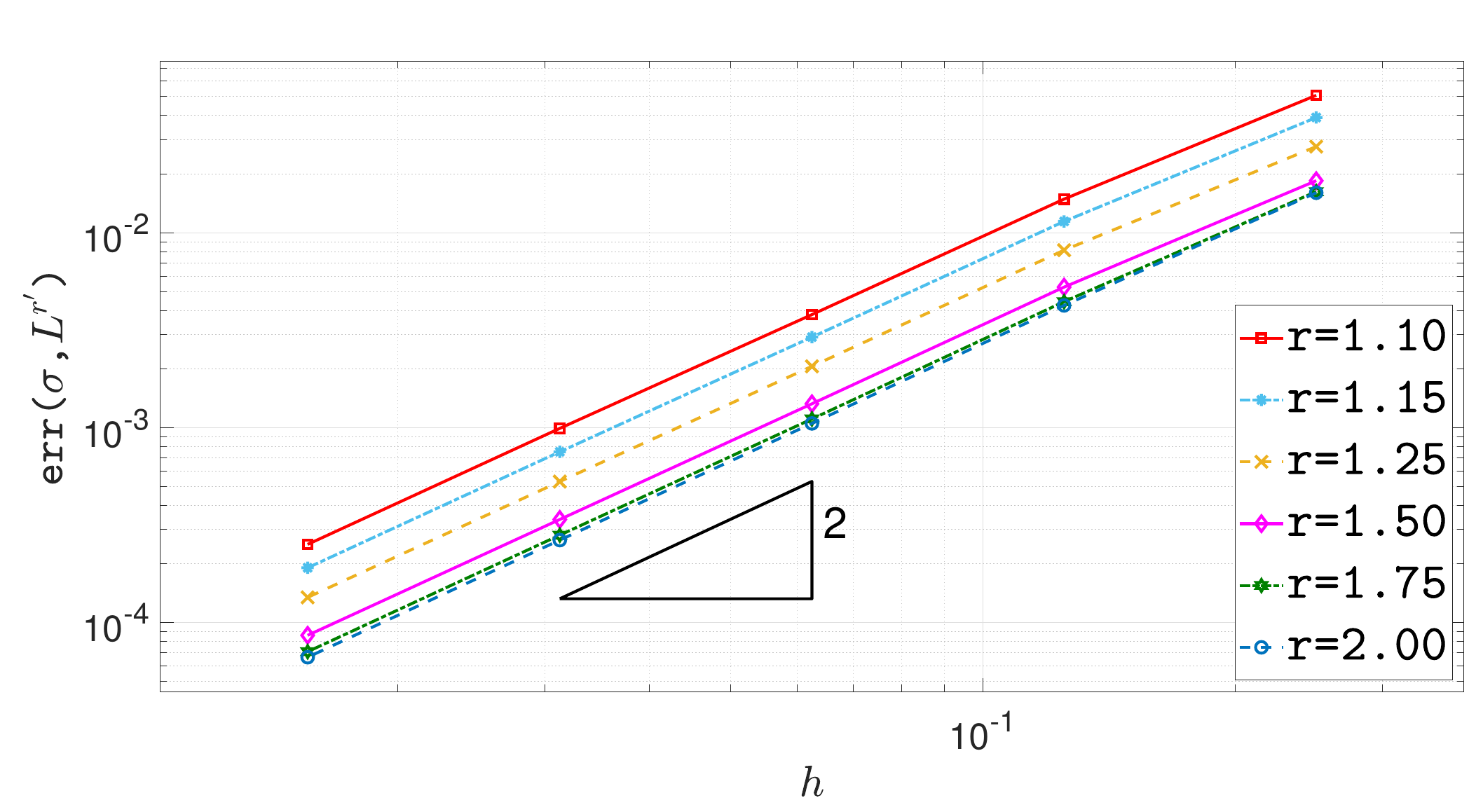}  
&\includegraphics[scale=0.2]{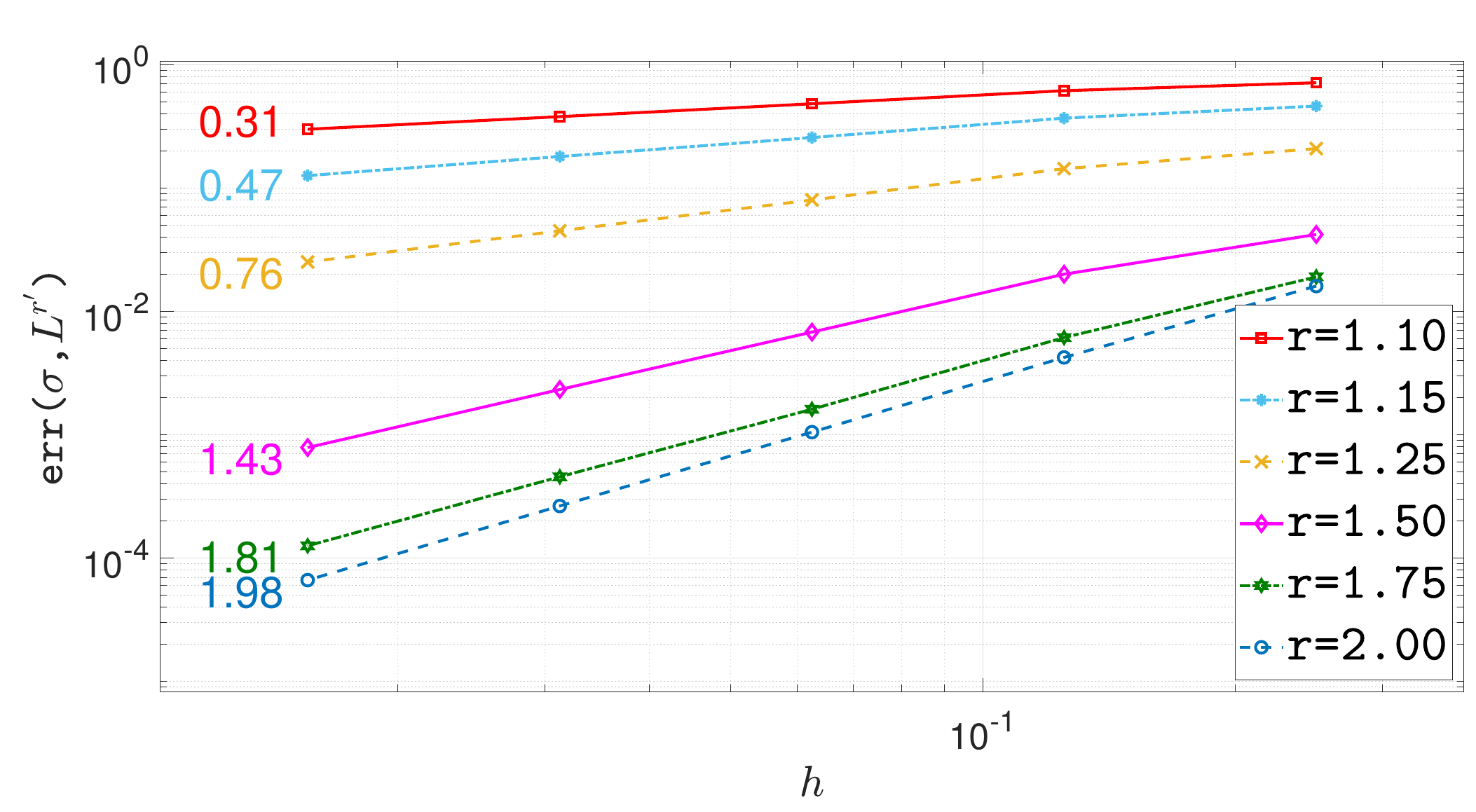}                        
\\
\centering
\begin{small}
\begin{tabular}{l|ccccc}
\toprule
& \multicolumn{5}{c}{\texttt{r}}
\\
   $\texttt{1/h}$
&  $\texttt{1.10}$      
&  $\texttt{1.15}$ 
&  $\texttt{1.25}$   
&  $\texttt{1.50}$        
&  $\texttt{1.75}$   
\\
\midrule          
  \texttt{4}
& \texttt{5|6}
& \texttt{5|6}
& \texttt{4|5}
& \texttt{4|5}
& \texttt{3|4}
\\
  \texttt{8}
& \texttt{5|6}
& \texttt{4|6}
& \texttt{4|5}
& \texttt{4|5}
& \texttt{3|4}
\\
  \texttt{16}
& \texttt{5|6}
& \texttt{4|6}
& \texttt{4|5}
& \texttt{4|5}
& \texttt{3|4}
\\
  \texttt{32}
& \texttt{4|6}
& \texttt{4|6}
& \texttt{4|5}
& \texttt{4|5}
& \texttt{3|4}
\\
  \texttt{64}
& \texttt{4|6}
& \texttt{4|6}
& \texttt{4|5}
& \texttt{4|5}
& \texttt{3|4}
\\
\bottomrule
\end{tabular}
\end{small}
&
\centering
\begin{small}
\begin{tabular}{l|ccccc}
\toprule
& \multicolumn{5}{c}{\texttt{r}}
\\
   $\texttt{1/h}$
&  $\texttt{1.10}$      
&  $\texttt{1.15}$ 
&  $\texttt{1.25}$   
&  $\texttt{1.50}$        
&  $\texttt{1.75}$   
\\
\midrule          
  \texttt{4}
& \texttt{7|29}
& \texttt{7|24}
& \texttt{7|16}
& \texttt{5|7}
& \texttt{4|5}
\\
  \texttt{8}
& \texttt{7|31}
& \texttt{7|23}
& \texttt{6|14}
& \texttt{5|7}
& \texttt{3|4}
\\
  \texttt{16}
& \texttt{7|28}
& \texttt{7|20}
& \texttt{6|12}
& \texttt{5|6}
& \texttt{3|4}
\\
  \texttt{32}
& \texttt{7|24}
& \texttt{7|17}
& \texttt{6|10}
& \texttt{4|6}
& \texttt{3|4}
\\
  \texttt{64}
& \texttt{7|19}
& \texttt{7|14}
& \texttt{6|09}
& \texttt{4|6}
& \texttt{3|4}
\\
\bottomrule
\end{tabular}
\end{small}
\end{tabular}

\caption{Test 1. Convergence histories of the VEM errors (cf. beginning of Subsection \ref{sub:error}) and the number of iterations of the fixed-point procedure for the mesh family \texttt{QUADRILATERAL}.}
\label{tab:Q}
\end{table}

\begin{table}[!ht]
\centering
\begin{tabular}{cc}
\multicolumn{2}{c}
{\texttt{RANDOM MESHES}}
\\
 $\delta=\texttt{1}$
&$\delta=\texttt{0}$
\\
 \includegraphics[scale=0.2]{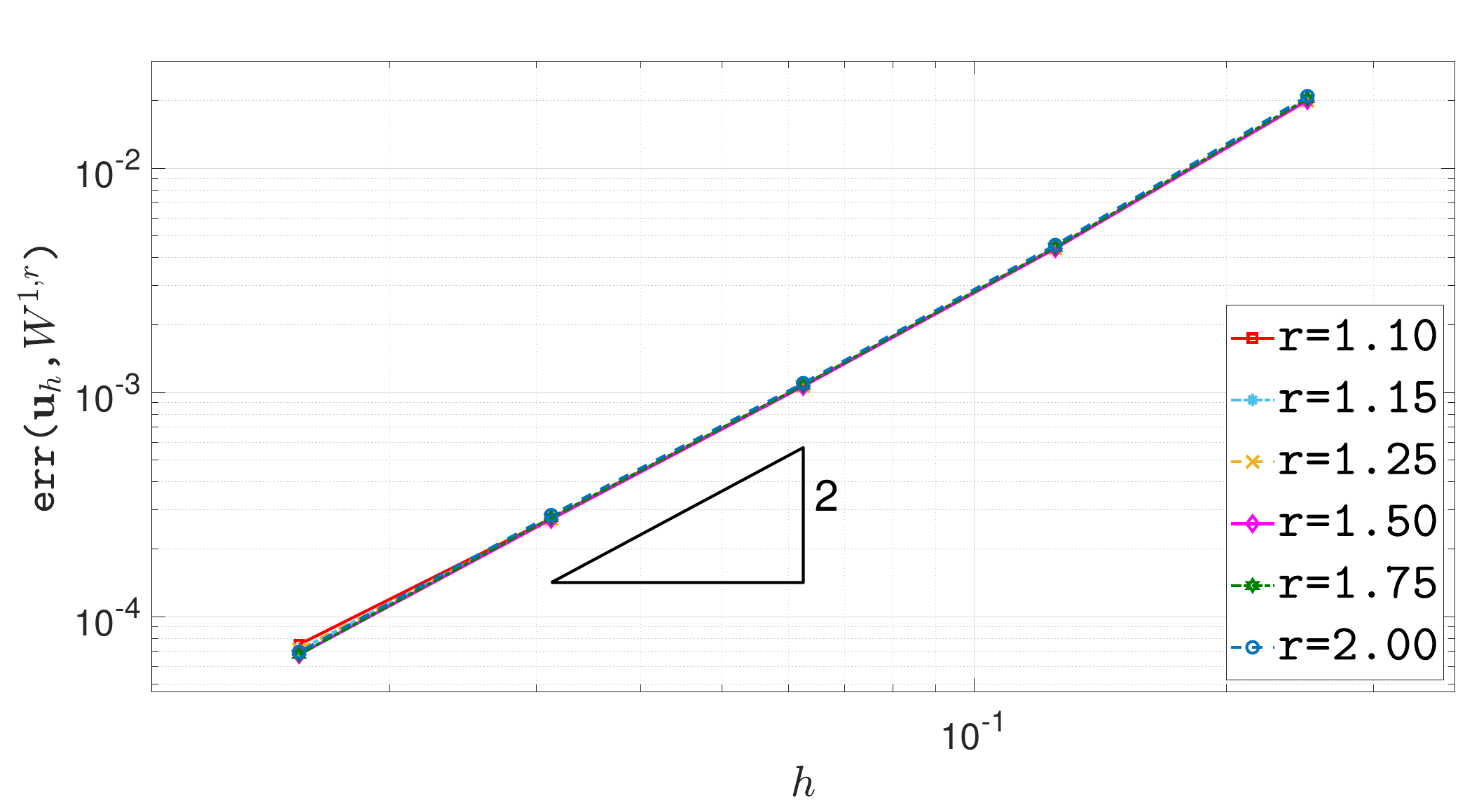}  
&\includegraphics[scale=0.2]{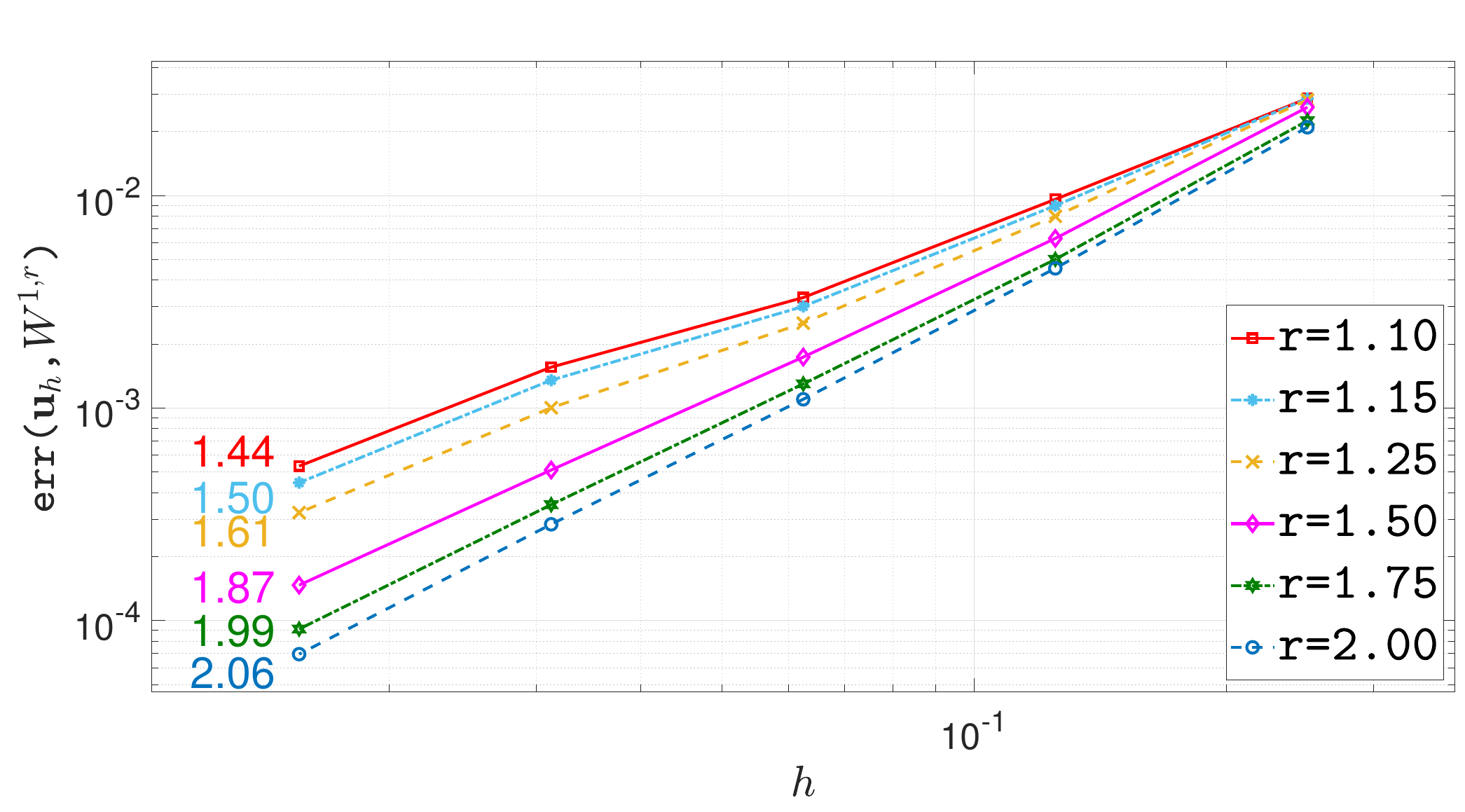}                        
\\
 \includegraphics[scale=0.2]{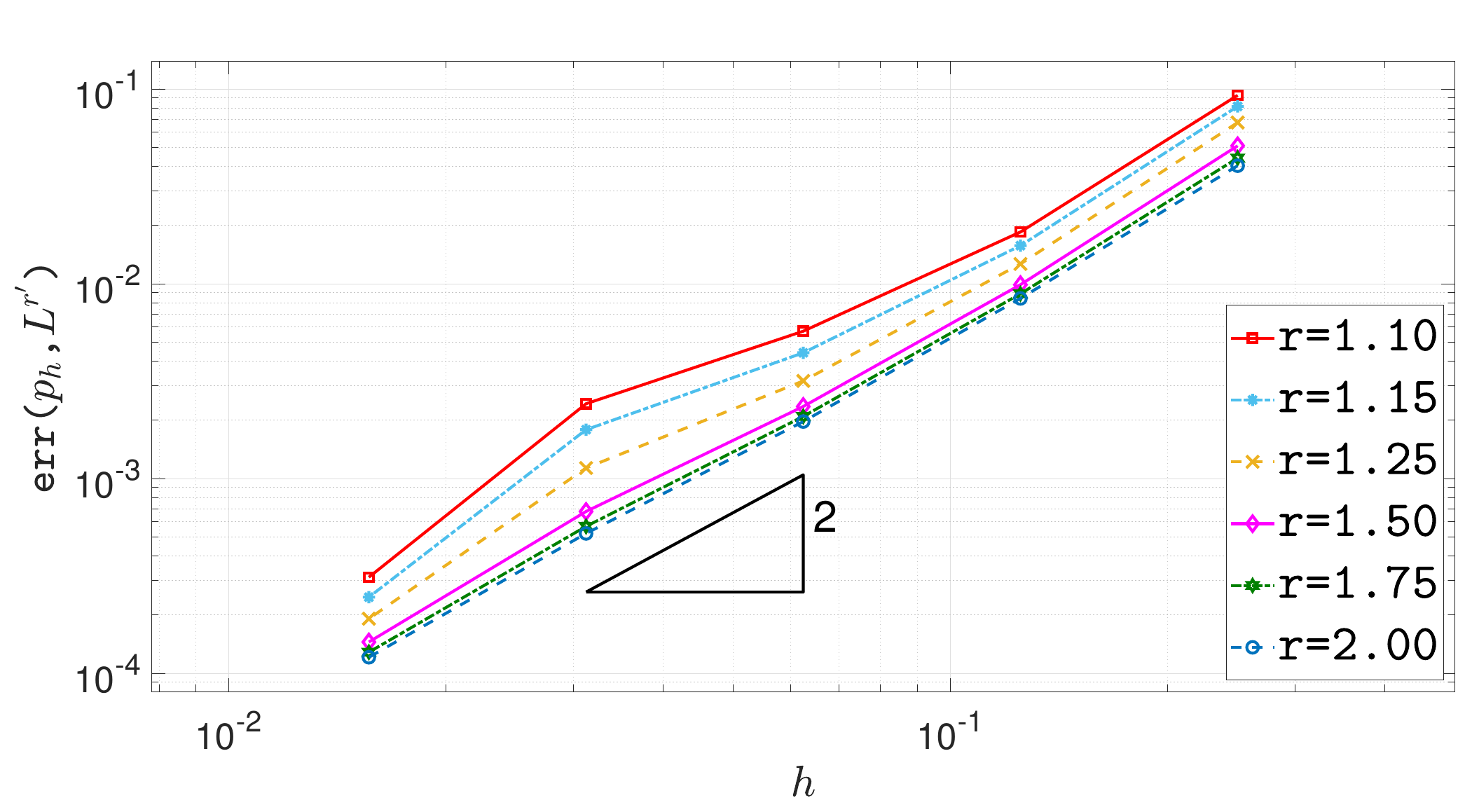}  
&\includegraphics[scale=0.2]{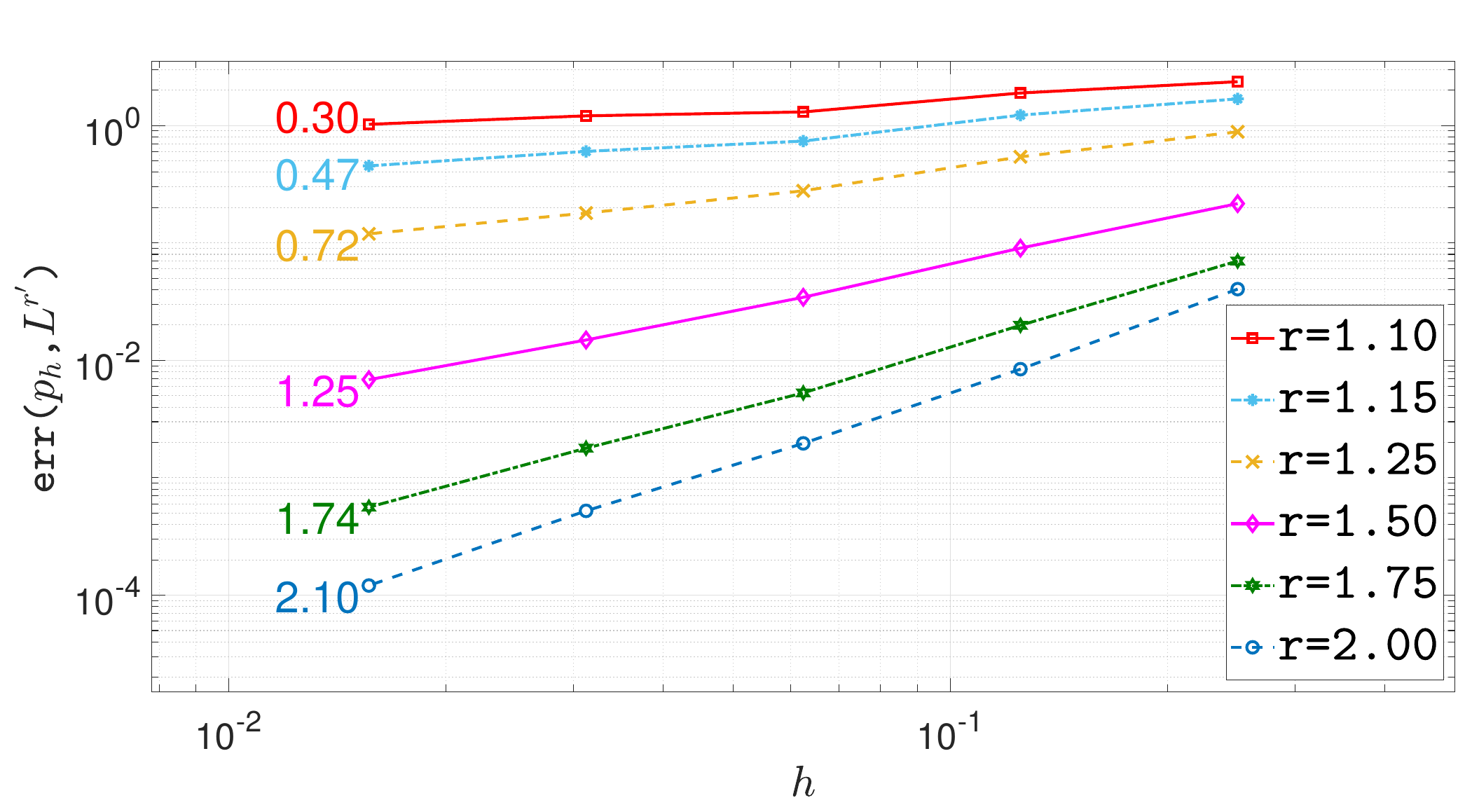}                        
\\
 \includegraphics[scale=0.2]{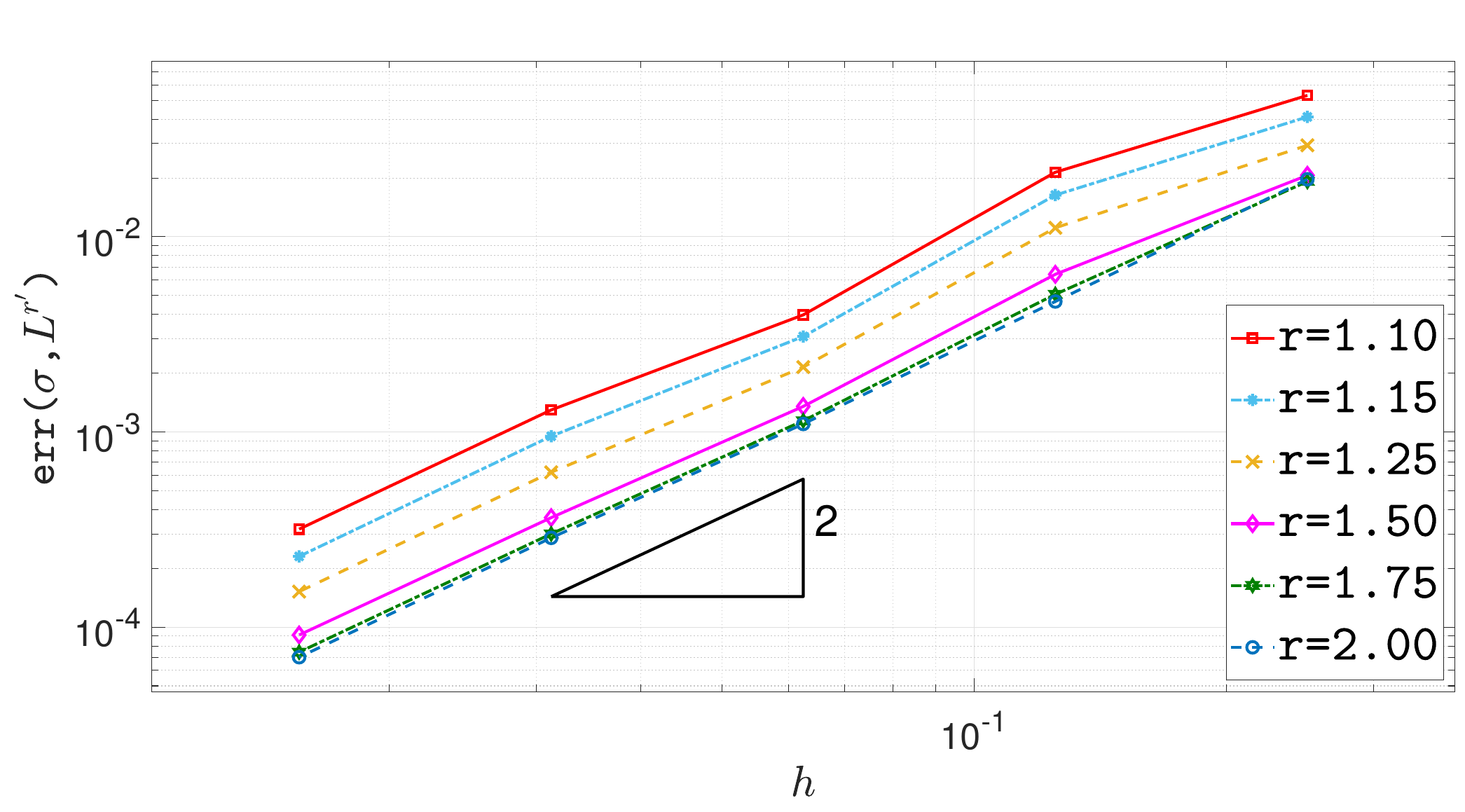}  
&\includegraphics[scale=0.2]{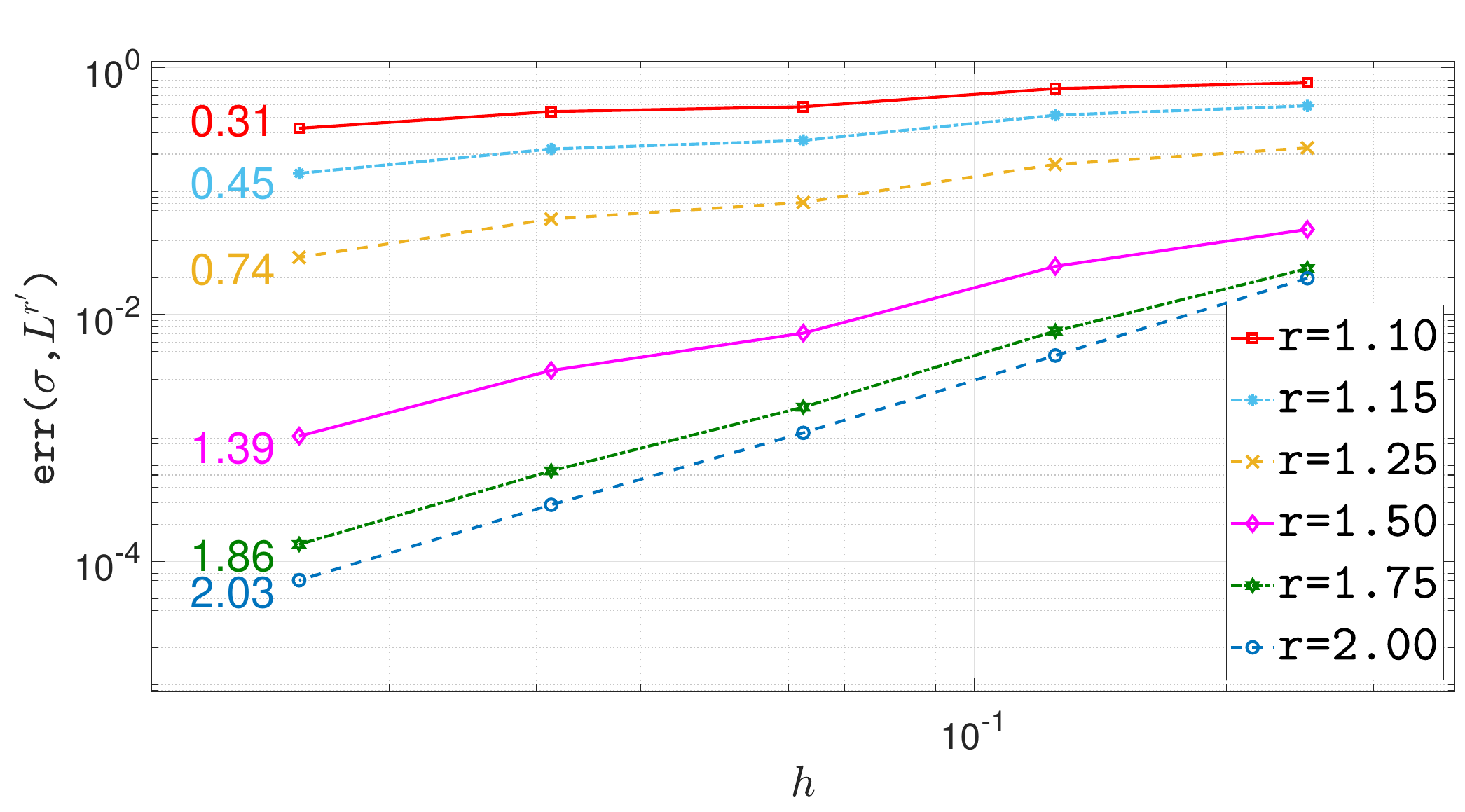}                        
\\
\centering
\begin{small}
\begin{tabular}{l|ccccc}
\toprule
&
\multicolumn{5}{c}{\texttt{r}}
\\
   \texttt{1/h}
&  $\texttt{1.10}$      
&  $\texttt{1.15}$ 
&  $\texttt{1.25}$   
&  $\texttt{1.50}$        
&  $\texttt{1.75}$   
\\
\midrule          
  \texttt{4}
& \texttt{5|6}
& \texttt{5|6}
& \texttt{4|6}
& \texttt{4|5}
& \texttt{3|4}
\\
  \texttt{8}
& \texttt{5|6}
& \texttt{4|6}
& \texttt{4|6}
& \texttt{4|5}
& \texttt{3|4}
\\
  \texttt{16}
& \texttt{5|6}
& \texttt{4|6}
& \texttt{4|5}
& \texttt{4|5}
& \texttt{3|4}
\\
  \texttt{32}
& \texttt{4|6}
& \texttt{4|6}
& \texttt{4|5}
& \texttt{3|4}
& \texttt{3|4}
\\
  \texttt{64}
& \texttt{4|6}
& \texttt{4|6}
& \texttt{4|5}
& \texttt{3|4}
& \texttt{3|4}
\\
\bottomrule
\end{tabular}
\end{small}
&
\centering
\begin{small}
\begin{tabular}{l|ccccc}
\toprule
&
\multicolumn{5}{c}{\texttt{r}}
\\
   \texttt{1/h}
&  $\texttt{1.10}$      
&  $\texttt{1.15}$ 
&  $\texttt{1.25}$   
&  $\texttt{1.50}$        
&  $\texttt{1.75}$   
\\
\midrule          
  \texttt{4}
& \texttt{7|22}
& \texttt{7|17}
& \texttt{7|12}
& \texttt{5|7}
& \texttt{4|5}
\\
  \texttt{8}
& \texttt{7|25}
& \texttt{7|18}
& \texttt{6|12}
& \texttt{5|6}
& \texttt{4|4}
\\
  \texttt{16}
& \texttt{7|20}
& \texttt{7|15}
& \texttt{6|10}
& \texttt{5|6}
& \texttt{4|4}
\\
  \texttt{32}
& \texttt{7|17}
& \texttt{7|13}
& \texttt{6|09}
& \texttt{5|6}
& \texttt{4|4}
\\
  \texttt{64}
& \texttt{7|14}
& \texttt{7|11}
& \texttt{6|07}
& \texttt{5|6}
& \texttt{4|4}
\\
\bottomrule
\end{tabular}
\end{small}
\end{tabular}

\caption{Test 1. Convergence histories of the VEM errors (cf. beginning of Subsection \ref{sub:error}) and number of iterations of the fixed-point procedure  for the mesh family \texttt{RANDOM}.}
\label{tab:V}
\end{table}


\paragraph{\textbf{Test 2. Singular solution.}}

The scope of the present test is to show the performance of the method  in case of low regular solutions. 
We investigate the behavior of the proposed method for the test proposed in \cite[Section 7]{Belenki.Berselli.ea:12}.

We consider Problem \eqref{eq:stokes.continuous} on the square domain $\Omega = (-1, 1)^2$ where $\b f$ and the Dirichlet boundary conditions (posed on the whole $\partial \Omega$) are chosen in accordance with the analytical solution
\[
\b u_{\rm ex}(x_1,x_2) = 
\vert \b x \vert^{0.01}
\begin{bmatrix}
x_2
\\
-x_1
\end{bmatrix} \,,
\qquad
p_{\rm ex}(x_1,x_2) =  -\vert \b x \vert^{\gamma} + c_\gamma \,.
\]
where $\gamma=\frac{2}{r} - 1 + 0.01$ and $c_\gamma$ is s.t. $p_{\rm ex}$ is zero averaged.
Notice that for all $r \in (1, 2]$
\[
\b u_{\rm ex} \in \b W^{2/r+1, r}(\Omega) \,,
\quad
\b{\sigma}(\cdot, \b\epsilon(\b u_{\rm ex})) \in \mathbb{W}^{2/r',r'}(\Omega) \,,
\quad
\b f \in \b {W}^{2/r' -1,r'}(\Omega)\,,
\quad
p_{\rm ex} \in W^{1, r'}(\Omega) \,,
\]
therefore, with the notation of Proposition \ref{prop:main} and Proposition \ref{prop:main:2}: 
\[
k_1= \frac{2}{r} \,,
\quad
k_2= \frac{2}{r'} \,,
\quad
k_3= \frac{2}{r'} -2 \,,
\quad
k_4= 1 \,.
\]
We analyse the Carreau--Yasuda model \eqref{eq:Carreau} with $\mu=1$, $\alpha = 1$ and
\[
\texttt{r} =
\texttt{1.25}, \,
\texttt{1.33}, \,
\texttt{1.50}, \,
\texttt{1.67}, \,
\texttt{1.75}, \,
\texttt{2.00}, \,.
\]
We consider $\delta=\texttt{0}$ (differently from \cite[Section 7]{Belenki.Berselli.ea:12} where  $\delta=\texttt{1e-04}$).
The domain $\Omega$ is partitioned with a sequence of \texttt{TRIANGULAR} meshes with diameter $\texttt{h}$ =$\texttt{1/2}$, $\texttt{1/4}$, $\texttt{1/8}$, $\texttt{1/16}$ (see Fig.\ref{fig:meshes}).
In Tab. \ref{tab:TV} we show the errors $\texttt{err}(\b u_h, W^{1,r})$ for the proposed values of $\texttt{r}$. Notice that, accordingly with Proposition \ref{prop:main} the expected rate of convergence is $2/r'$, nevertheless we numerically observe linear convergence.
This means that, at least in the pre-asymptotic regime, the error velocity errors is dominated by the term $h^{k_1 r/2} = h$ (cf. equation \eqref{falcao}). 
In Tab. \ref{tab:TP} and Tab. \ref{tab:TS} we exhibit the errors
$\texttt{err}(p_h, L^{r'})$,
$\texttt{err}(\b \sigma, L^{r'})$, respectively.
Notice that the pressure errors are in accordance with Proposition \ref{prop:main:2} and the previous numerical evidence, that is
$\Vert p - p_h\Vert_{L^{r'}(\Omega)} \lesssim  \Vert \b u - \b u_h\Vert_{W^{1,r}(\Omega_h)}^{2/r'} + h^{2/r'} + h \lesssim h^{2/r'}$.

\begin{table}[!ht]
\centering
\begin{small}
\begin{tabular}{l|cccccc}
\toprule
& \multicolumn{6}{c}{\texttt{r}}
\\
   \texttt{1/h}
&  $\texttt{1.25}$      
&  $\texttt{1.33}$ 
&  $\texttt{1.50}$   
&  $\texttt{1.67}$        
&  $\texttt{1.75}$   
&  $\texttt{2.00}$       
\\
\midrule          
  \texttt{2}
& \texttt{7.5101e-04}
& \texttt{7.7297e-04}
& \texttt{8.2290e-04}
& \texttt{8.7303e-04}
& \texttt{8.9993e-04}
& \texttt{9.9850e-04}
\\
  \texttt{4}
& \texttt{2.8701e-04}
& \texttt{3.0353e-04}
& \texttt{3.4469e-04}
& \texttt{3.8999e-04}
& \texttt{4.1395e-04}
& \texttt{4.9925e-04}
\\
  \texttt{8}
& \texttt{1.1626e-04}
& \texttt{1.2176e-04}
& \texttt{1.4217e-04}
& \texttt{1.7106e-04}
& \texttt{1.8778e-04}
& \texttt{2.4837e-04}
\\
  \texttt{16}
& \texttt{7.4789e-05}
& \texttt{5.9046e-05}
& \texttt{6.1538e-05}
& \texttt{7.5020e-05}
& \texttt{8.4862e-05}
& \texttt{1.2339e-04}
\\
\midrule     
{\texttt{a.c.r.}}
& \texttt{1.1093e+00}
& \texttt{1.2368e+00}
& \texttt{1.2470e+00}
& \texttt{1.1802e+00}
& \texttt{1.1355e+00}
& \texttt{1.0055e+00}
\\
\midrule     
\texttt{$\frac{2}{r'}$}
& \texttt{0.40}
& \texttt{0.50}
& \texttt{0.66}
& \texttt{0.80}
& \texttt{0.86}
& \texttt{1.00}
\\
\bottomrule
\end{tabular}
\end{small}
\caption{Test 2. Errors $\texttt{err}(\b u_h, W^{1,r})$.}
\label{tab:TV}
\end{table}

\begin{table}[!ht]
\centering
\begin{small}
\begin{tabular}{l|cccccc}
\toprule
& \multicolumn{6}{c}{\texttt{r}}
\\
   \texttt{1/h}
&  $\texttt{1.25}$      
&  $\texttt{1.33}$ 
&  $\texttt{1.50}$   
&  $\texttt{1.67}$        
&  $\texttt{1.75}$   
&  $\texttt{2.00}$       
\\
\midrule          
  \texttt{2}
& \texttt{1.2116e-01}
& \texttt{1.0603e-01}
& \texttt{9.6989e-02}
& \texttt{9.9739e-02}
& \texttt{1.0303e-01}
& \texttt{1.7563e-01}
\\
  \texttt{4}
& \texttt{8.7515e-02}
& \texttt{6.7873e-02}
& \texttt{5.2335e-02}
& \texttt{5.0615e-02}
& \texttt{5.1764e-02}
& \texttt{8.7824e-02}
\\
  \texttt{8}
& \texttt{6.4955e-02}
& \texttt{4.5430e-02}
& \texttt{2.9065e-02}
& \texttt{2.5820e-02}
& \texttt{2.6045e-02}
& \texttt{4.3701e-02}
\\
  \texttt{16}
& \texttt{4.8749e-02}
& \texttt{3.1194e-02}
& \texttt{1.6625e-02}
& \texttt{1.3260e-02}
& \texttt{1.3135e-02}
& \texttt{2.1711e-02}
\\
\midrule     
  \texttt{a.c.r.}
& \texttt{4.3785e-01}
& \texttt{5.8840e-01}
& \texttt{8.4813e-01}
& \texttt{9.7035e-01}
& \texttt{9.9055e-01}
& \texttt{1.0053e+00}
\\
\midrule     
\texttt{$\frac{2}{r'}$}
& \texttt{0.40}
& \texttt{0.50}
& \texttt{0.66}
& \texttt{0.80}
& \texttt{0.86}
& \texttt{1.00}
\\
\bottomrule
\end{tabular}
\end{small}
\caption{Test 2. Errors $\texttt{err}(p_h, L^{r'})$.}
\label{tab:TP}
\end{table}


\begin{table}[!ht]
\centering
\begin{small}
\begin{tabular}{l|cccccc}
\toprule

& \multicolumn{6}{c}{\texttt{r}}
\\
   \texttt{1/h}
&  $\texttt{1.25}$      
&  $\texttt{1.33}$ 
&  $\texttt{1.50}$   
&  $\texttt{1.67}$        
&  $\texttt{1.75}$   
&  $\texttt{2.00}$       
\\
\midrule          
  \texttt{2}
& \texttt{1.4319e-01}
& \texttt{1.4386e-01}
& \texttt{1.4352e-01}
& \texttt{1.4369e-01}
& \texttt{1.4400e-01}
& \texttt{1.4592e-01}
\\
  \texttt{4}
& \texttt{1.0815e-01}
& \texttt{1.0136e-01}
& \texttt{9.0112e-02}
& \texttt{8.2299e-02}
& \texttt{7.9320e-02}
& \texttt{7.3008e-02}
\\
  \texttt{8}
& \texttt{8.1752e-02}
& \texttt{7.1451e-02}
& \texttt{5.6597e-02}
& \texttt{4.7136e-02}
& \texttt{4.3642e-02}
& \texttt{3.6328e-02}
\\
  \texttt{16}
& \texttt{6.1782e-02}
& \texttt{5.0421e-02}
& \texttt{3.5648e-02}
& \texttt{2.7099e-02}
& \texttt{2.4086e-02}
& \texttt{1.8048e-02}
\\
\midrule     
  \texttt{a.c.r.}
& \texttt{4.0425e-01}
& \texttt{5.0419e-01}
& \texttt{6.6980e-01}
& \texttt{8.0220e-01}
& \texttt{8.5995e-01}
& \texttt{1.0051e+00}
\\
\midrule     
\texttt{$\frac{2}{r'}$}
& \texttt{0.40}
& \texttt{0.50}
& \texttt{0.66}
& \texttt{0.80}
& \texttt{0.86}
& \texttt{1.00}
\\
\bottomrule
\end{tabular}
\end{small}
\caption{Test 2. Errors $\texttt{err}(\b \sigma, L^{r'})$.}
\label{tab:TS}
\end{table}

\section*{Acknowledgements}
This research has been partially funded by the European Union (ERC, NEMESIS, project number 101115663). Views and opinions expressed are however those of the author(s) only and do not necessarily reflect those of the European Union or the European Research Council Executive Agency. 
PFA, LBdV and MV have been partially funded by 
PRIN2020 n. \verb+20204LN5N5+\emph{``Advanced polyhedral discretisations of heterogeneous PDEs for multiphysics problems''} research grant, funded by the Italian Ministry of Universities and Research (MUR). 
GV has been partially funded by  
PRIN2022 n. \verb+2022MBY5JM+\emph{``FREYA - Fault REactivation: a hYbrid numerical Approach''} research grant, and
PRIN2022PNRR n. \verb+P2022M7JZW+\emph{``SAFER MESH - Sustainable mAnagement oF watEr Resources: ModEls and numerical MetHods''} research grant,
funded by the Italian Ministry of Universities and Research (MUR).
PFA is partially supported by ICSC -- Centro Nazionale di Ricerca in High Performance Computing, Big Data, and Quantum Computing funded by European Union -- NextGenerationEU. 
The present research is part of the activities of ``Dipartimento di Eccelllenza 2023-2027''.
The authors are members of INdAM-GNCS.

\nocitenames
\bibliographystyle{plain}
\bibliography{references}

\appendix
\section{Appendix}
\label{sec:appendix}
We here show the proof of two technical lemmas.

\begin{lemma}\label{Lemma:A}
   Under the same assumptions of Proposition \ref{prop:main}, let $\b u_I \in \VDG$ be the interpolant of $\b u$ (cf. Lemma \ref{lem:approx-interp}) and $\b \xi_h :=\b u_h - \b u_I$, then the following holds
   \[
   \begin{aligned}
       \| (\b \sigma(\cdot, \b\Pi_{k-1}^0 \b \epsilon(\b u_h)) - \b\sigma(\cdot, \b \epsilon(\b u))\|_{\mathbb{L}^{r'}(\Omega)}^{r'}&\lesssim 
       h^{k_1 r} R_1^r  + \\
       & +
       \int_\Omega \left( \b \sigma(\cdot, \b\Pi_{k-1}^0 \b \epsilon(\b u_h)) - \b\sigma(\cdot, \b\Pi_{k-1}^0\b \epsilon(\b u_I))\right) : \b\Pi_{k-1}^0 \epsilon(\b \xi_h) \,.
    \end{aligned}
   \]
\end{lemma}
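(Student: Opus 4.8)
The plan is to split the stress difference into a projection-error part and a part that compares $\b\sigma$ evaluated at two projected gradients, exactly mirroring the decomposition $T_1 = T_1^A + T_1^B$ in the proof of Proposition~\ref{prop:main}. First I would write
\[
\| \b\sigma(\cdot, \b\Pi_{k-1}^0 \b\epsilon(\b u_h)) - \b\sigma(\cdot, \b\epsilon(\b u))\|_{\mathbb L^{r'}(\Omega)}^{r'}
\lesssim
\| \b\sigma(\cdot, \b\Pi_{k-1}^0 \b\epsilon(\b u_h)) - \b\sigma(\cdot, \b\Pi_{k-1}^0\b\epsilon(\b u_I))\|_{\mathbb L^{r'}(\Omega)}^{r'}
+ \| \b\sigma(\cdot, \b\Pi_{k-1}^0\b\epsilon(\b u_I)) - \b\sigma(\cdot, \b\epsilon(\b u))\|_{\mathbb L^{r'}(\Omega)}^{r'},
\]
using that $(a+b)^{r'}\lesssim a^{r'}+b^{r'}$. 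The second term is controlled by the H\"older continuity \eqref{eq:extrass.continuity} (with $\delta=0$) together with the interpolation and projection bounds of Lemma~\ref{lem:approx-interp}: one gets $\|\b\Pi_{k-1}^0\b\epsilon(\b u_I) - \b\epsilon(\b u)\|$ type quantities raised to suitable powers, giving $h^{k_1 r}R_1^r$. The key technical point here is that $r'(r-1) = r$, so that a bound of the form $|\b\sigma(\cdot,\b\tau)-\b\sigma(\cdot,\b\eta)|\lesssim|\b\tau-\b\eta|^{r-1}$ (valid for $\delta=0$ by \eqref{eq:extrass.continuity} and \eqref{eq:pre_holdercont}) yields $\||\b\tau-\b\eta|^{r-1}\|_{L^{r'}}^{r'} = \|\b\tau-\b\eta\|_{L^r}^r$, matching the exponents in the statement.

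For the first term, the crucial identity is the equivalence \eqref{eq:extrass.monotonicity}: for $\delta=0$,
\[
|\b\sigma(\cdot,\b\tau) - \b\sigma(\cdot,\b\eta)|^{r'} \simeq \bigl(|\b\tau|+|\b\eta|\bigr)^{(r-2)r'}|\b\tau-\b\eta|^{r'}
\simeq \bigl( (\b\sigma(\cdot,\b\tau)-\b\sigma(\cdot,\b\eta)):(\b\tau-\b\eta)\bigr)^{r'/2}\bigl(|\b\tau|+|\b\eta|\bigr)^{(r-2)r'/2},
\]
obtained by writing $|\b\sigma(\cdot,\b\tau)-\b\sigma(\cdot,\b\eta)|^{r'}$ as the product of $\bigl((\b\sigma(\cdot,\b\tau)-\b\sigma(\cdot,\b\eta)):(\b\tau-\b\eta)\bigr)^{r'/2}$ and the complementary power. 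Applying this pointwise with $\b\tau = \b\Pi_{k-1}^0\b\epsilon(\b u_h)$ and $\b\eta = \b\Pi_{k-1}^0\b\epsilon(\b u_I)$, then integrating and using the H\"older inequality with exponents $(2/r', 2/(2-r'))$ — note $r'/2 + (2-r')/2 = 1$ — decouples the monotone product (which integrates to exactly the term on the right-hand side of the claimed inequality, raised to the power $r'/2$) from a factor involving $\int_\Omega \bigl(|\b\Pi_{k-1}^0\b\epsilon(\b u_h)| + |\b\Pi_{k-1}^0\b\epsilon(\b u_I)|\bigr)^r$. This last factor is bounded, using the $\mathbb L^r$-stability of $\b\Pi_{k-1}^0$ and the a-priori estimates \eqref{eq:a-priori.disc.u}, \eqref{eq:a-priori_cont.u} (plus the interpolation bound for $\b u_I$), by a constant depending only on $\mathcal N(\b f,\b g)$. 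Since $\int_\Omega(\b\sigma(\cdot,\b\Pi_{k-1}^0\b\epsilon(\b u_h))-\b\sigma(\cdot,\b\Pi_{k-1}^0\b\epsilon(\b u_I))):\b\Pi_{k-1}^0\b\epsilon(\b\xi_h) \ge 0$ by monotonicity, and its power $r'/2$ appears, I would finally use $x^{r'/2} \lesssim x + 1$ (or more carefully absorb using that this quantity is itself bounded a priori) to linearize the $r'/2$ power and arrive at the stated additive bound.

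The main obstacle I anticipate is bookkeeping the exponents in the H\"older step and ensuring the $(r-2)r'/2$-power factor is genuinely controlled by $\mathcal N(\b f,\b g)$ rather than by a negative power of something that could vanish — here one must be careful that $r-2<0$, so $\bigl(|\b\tau|+|\b\eta|\bigr)^{(r-2)r'/2}$ is a \emph{negative} power and cannot simply be bounded above by a constant; instead the correct move is to keep it paired inside the H\"older inequality so that it combines with $|\b\tau-\b\eta|^{r'}$ to reconstruct the monotone product, leaving only the \emph{positive} power $\bigl(|\b\tau|+|\b\eta|\bigr)^{r}$ on the other side. Managing this sign issue correctly — essentially redoing, at the level of the projected gradients, the computation that produced \eqref{eq:a.monotonicity} — is the delicate part; everything else is routine application of Lemma~\ref{lem:approx-interp}, the stability of $\b\Pi_{k-1}^0$, and the a-priori bounds.
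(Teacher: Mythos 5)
Your overall architecture is fine — splitting by the triangle inequality into the pair $(\b\Pi^0_{k-1}\b\epsilon(\b u_h),\b\Pi^0_{k-1}\b\epsilon(\b u_I))$ plus an interpolation remainder, and using $(r-1)r'=r$ to turn the remainder into $h^{k_1r}R_1^r$, is correct and arguably cleaner than the paper's. The gap is in the treatment of the first term: the ``H\"older inequality with exponents $(2/r',2/(2-r'))$'' is not a H\"older inequality when $r\in(1,2)$. Since $r'=r/(r-1)>2$, you have $2/r'<1$ and $2/(2-r')<0$; this is the regime of the \emph{reverse} H\"older inequality, which gives
\[
\int_\Omega M^{r'/2}\,G \;\ge\; \Bigl(\int_\Omega M\Bigr)^{r'/2}\Bigl(\int_\Omega G^{2/(2-r')}\Bigr)^{(2-r')/2},
\]
i.e.\ a bound in the wrong direction. (A two-cell counterexample with $M=(2,0)$, $G=(1,1)$ on halves of a unit-measure domain gives left-hand side $2^{r'/2-1}>1=$ right-hand side.) More structurally, no integral inequality can convert $\int_\Omega M^{r'/2}(\cdots)$ into $\bigl(\int_\Omega M\bigr)^{r'/2}(\cdots)$ when $r'/2>1$: that is Jensen in the forbidden direction. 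A secondary issue is the final linearization $x^{r'/2}\lesssim x+1$, which would leave an $O(1)$ term that does not vanish with $h$; only the ``absorb using the a-priori bound'' variant, $x^{r'/2}\le C^{r'/2-1}x$ for $x\le C$, is admissible.

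The correct replacement for your H\"older step is not an integral inequality at all but the pointwise factorization of Barrett--Liu, which is what the paper uses. With $\b\tau:=\b\Pi^0_{k-1}\b\epsilon(\b u_h)$ and $\b\eta:=\b\Pi^0_{k-1}\b\epsilon(\b u_I)$ one writes
\[
|\b\sigma(\cdot,\b\tau)-\b\sigma(\cdot,\b\eta)|^{r'}\simeq
\underbrace{(|\b\tau|+|\b\eta|)^{(r-2)(r'-1)}|\b\tau-\b\eta|^{r'-2}}_{=:T_1}\cdot
\underbrace{(|\b\tau|+|\b\eta|)^{r-2}|\b\tau-\b\eta|^{2}}_{\simeq\,(\b\sigma(\cdot,\b\tau)-\b\sigma(\cdot,\b\eta)):(\b\tau-\b\eta)}
\]
and checks $T_1\lesssim1$ \emph{pointwise}, using $r-2\le0$, $r'\ge2$, $|\b\tau|+|\b\eta|\ge|\b\tau-\b\eta|$ and the identity $(r-2)(r'-1)+r'-2=0$. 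Integrating then produces the monotone term of the Lemma with exponent one, with no a-priori bound and no linearization needed. The paper applies this device to the pair $(\b\Pi^0_{k-1}\b\epsilon(\b u_h),\b\epsilon(\b u))$ and only afterwards splits the resulting monotone product into the $\b u_I$-pair plus an $h^{k_1r}R_1^r$ remainder via the shifted Young inequalities of Lemma~\ref{lem:young_shifted}; if you substitute the pointwise bound for your H\"older step, your triangle-inequality route closes and avoids that extra Young-inequality stage.
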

\begin{proof}
    Employing \eqref{eq:extrass.continuity} with $\delta=0$ we have 
    \begin{equation}
    \label{eq:lemmaA0}
    \begin{aligned}
       &\| (\b \sigma(\cdot, \b\Pi_{k-1}^0 \b \epsilon(\b u_h)) - \b\sigma(\cdot, \b \epsilon(\b u))\|_{\mathbb{L}^{r'}(\Omega)}^{r'}
        \lesssim
        \int_{\Omega_h} (\vert\b\Pi_{k-1}^0 \b \epsilon(\b u_h)\vert+ \vert\b \epsilon(\b u)\vert)^{(r-2)r'}\vert 
        \b\Pi_{k-1}^0 \b \epsilon(\b u_h)- \b \epsilon(\b u)
        \vert^{r'}
        \\
&=\int_{\Omega} \left( \left[(\vert\b\Pi_{k-1}^0 \b \epsilon(\b u_h)\vert+ \vert\b \epsilon(\b u)\vert)^{(r-2)(r'-1)}\vert         \b\Pi_{k-1}^0 \b \epsilon(\b u_h)- \b \epsilon(\b u) \vert^{r'-2}\right] \times \right .
\\
&\quad \quad  \quad \times \left.\left[(\vert\b\Pi_{k-1}^0 \b \epsilon(\b u_h)\vert+ \vert\b \epsilon(\b u)\vert)^{(r-2)}\vert 
\b\Pi_{k-1}^0 \b \epsilon(\b u_h)- \b \epsilon(\b u)        \vert^{2}\right]
\right) =: \int_\Omega T_1 \times T_2 \,,
\end{aligned}
\end{equation}
Following \cite{Barrett.Liu:94}, employing $r-2\le 0$ and $r' \ge 2$, together with $\vert X\vert + \vert Y\vert \ge \vert X-Y\vert$ for $X,Y\in \mathbb{R}^{d\times d}$ and noticing that  $(r-2)(r'-1)+r'-2=(r-1)r' -r=0$, we have
\begin{equation}
\label{eq:lemmaAT1}
T_1
\lesssim 
\vert \b\Pi_{k-1}^0 \b \epsilon(\b u_h)- \b \epsilon(\b u) \vert^{(r-2)(r'-1)+(r'-2)} = 1 \,.
\end{equation}
We now estimate the term $T_2$ in \eqref{eq:lemmaA0}.
In the following $C$ will denote a generic
positive constant independent of $h$ that may change at each occurrence, whereas the parameter $\varepsilon$ adopted in \eqref{eq:lemmaAT2A} and \eqref{eq:lemmaAT2B} will be specified later.
Using \eqref{eq:extrass.monotonicity} we have 
\begin{equation}
\label{eq:lemmaAT2}
\begin{aligned}
   T_2 &\le C
\bigl( \b\sigma(\cdot, \b\Pi_{k-1}^0 \b \epsilon(\b u_h))- \b\sigma(\cdot, \b \epsilon(\b u) ) \bigr) : \bigl( \Pi_{k-1}^0 \b \epsilon(\b u_h)- \b \epsilon(\b u) \bigr) 
\\
&= C
\bigl( \b\sigma(\cdot, \b\Pi_{k-1}^0 \b \epsilon(\b u_h))- \b\sigma(\cdot, \b \Pi^0_{k-1}\b \epsilon(\b u_I) ) \bigr) : \bigl( \Pi_{k-1}^0 \b \epsilon(\b u_h)- \b \epsilon(\b u) \bigr) +
\\
& \quad + C
\bigl( \b\sigma(\cdot, \b \Pi^0_{k-1}\b \epsilon(\b u_I) ) - \b\sigma(\cdot, \b \epsilon(\b u) ) \bigr) : \bigl( \Pi_{k-1}^0 \b \epsilon(\b u_h)- \b \epsilon(\b u) \bigr)
=:T_2^A+T_2^B \,.
\end{aligned}
\end{equation}
Using Lemma \ref{lm:Hirn} and Lemma \ref{lem:young_shifted} we obtain
\begin{equation}
\label{eq:lemmaAT2A}
\begin{aligned}
    T_2^A & \le
    C \varphi'_{|\b\Pi_{k-1}^0 \b \epsilon(\b u_h)|}(|\b\Pi_{k-1}^0 \b \epsilon(\b \xi_h)|) \, | \Pi_{k-1}^0 \b \epsilon(\b u_h)- \b \epsilon(\b u) |
    & \quad & \text{(by \eqref{eq:extrass.continuity})}
    \\
    & \le
    \varepsilon 
    \varphi_{|\b\Pi_{k-1}^0 \b \epsilon(\b u_h)|}(| \Pi_{k-1}^0 \b \epsilon(\b u_h)- \b \epsilon(\b u) |) +
    C(\varepsilon) 
    \varphi_{|\b\Pi_{k-1}^0 \b \epsilon(\b u_h)|}(|\b\Pi_{k-1}^0 \b \epsilon(\b \xi_h)|)
    & \quad & \text{(by \eqref{eq:Young1})}
    \\
    & \le
    \gamma \varepsilon T_2 + 
    C(\varepsilon) \bigl(
    (\b \sigma(\cdot, \b\Pi_{k-1}^0 \b \epsilon(\b u_h)) - \b\sigma(\cdot, \b\Pi_{k-1}^0 \b \epsilon(\b u_I))
    \bigr) :  \b\Pi_{k-1}^0 \b \epsilon(\b \xi_h))  
    & \quad & \text{(by \eqref{eq:extrass.monotonicity})}
\end{aligned}    
\end{equation}
where in the last line $\gamma$ denotes the associated uniform hidden positive constant.
Using analogous arguments we have
\begin{equation}
\label{eq:lemmaAT2B}
\begin{aligned}
    T_2^B & \le
    C \varphi'_{|\b \epsilon(\b u)|}(|\b \epsilon(\b u) - \b\Pi_{k-1}^0 \b \epsilon(\b u_I)|) \, | \b \epsilon(\b u)  -\Pi_{k-1}^0 \b \epsilon(\b u_h) |
    & \quad & \text{(by \eqref{eq:extrass.continuity})}
    \\
    & \le
    \varepsilon 
    \varphi_{|\b \epsilon(\b u)|}(| \b \epsilon(\b u)  -\Pi_{k-1}^0 \b \epsilon(\b u_h) |) +
    C(\varepsilon) 
    \varphi_{|\b \epsilon(\b u)|}(|\b \epsilon(\b u) - \b\Pi_{k-1}^0 \b \epsilon(\b u_I)|)
    & \quad & \text{(by \eqref{eq:Young1})}
    \\
    &\le
    \gamma \varepsilon T_2 + 
    C(\varepsilon) 
    ( |\b \epsilon(\b u)| + |\b\Pi_{k-1}^0 \b \epsilon(\b u_I)|)^{r-2}
    |\b \epsilon(\b u) - \b\Pi_{k-1}^0 \b \epsilon(\b u_I)|^2 
    & \quad & \text{(by \eqref{eq:extrass.monotonicity})}
    \\
    &\le
    \gamma \varepsilon T_2 + 
    C(\varepsilon) 
    |\b \epsilon(\b u) - \b\Pi_{k-1}^0 \b \epsilon(\b u_I)|^r
    & \quad & \text{($r-2\le 0$)}
\end{aligned}    
\end{equation}
Taking $\epsilon = \frac{1}{4 \gamma}$ in \eqref{eq:lemmaAT2A} and \eqref{eq:lemmaAT2B}, from \eqref{eq:lemmaAT2} we obtain
\[
T_2 \lesssim 
\bigl(\b \sigma(\cdot, \b\Pi_{k-1}^0 \b \epsilon(\b u_h)) - \b\sigma(\cdot, \b\Pi_{k-1}^0 \b \epsilon(\b u_I))
     :  \b\Pi_{k-1}^0 \b \epsilon(\b \xi_h)\bigr)  +
|\b \epsilon(\b u) - \b\Pi_{k-1}^0 \b \epsilon(\b u_I)|^r \,.   
\] 
The proof follows combining the bound above with \eqref{eq:lemmaAT1} in \eqref{eq:lemmaA0} and employing Lemma \ref{lem:approx-interp}.
\end{proof}

\begin{lemma}\label{Lemma:B}
Under the same assumptions of Proposition \ref{prop:main}, let $\b u_I \in \VDG$ be the interpolant of $\b u$ (cf. Lemma \ref{lem:approx-interp}) and $\b \xi_h :=\b u_h - \b u_I$, then the following holds
   \[
    S(\widetilde{\b u}_h, \widetilde{\b u}_h)\lesssim
    S(\widetilde{\b u}_h, \widetilde{\b \xi}_h) - 
    S(\widetilde{\b u}_I, \widetilde{\b \xi}_h) + 
       h^{k_1 r} R_1^r \,,
   \]
where $\widetilde{\b v} := (I - \Pi^0_k)\b v$ for
any $\b v \in \b L^2(\Omega)$.   
\end{lemma}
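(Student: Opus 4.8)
The plan is to exploit the strong monotonicity of the stabilization form $S(\cdot,\cdot)$ (bound \eqref{eq:monotonicity:global_stab2}) in order to control $S(\widetilde{\b u}_h,\widetilde{\b u}_h)$ by the difference $S(\widetilde{\b u}_h,\widetilde{\b\xi}_h)-S(\widetilde{\b u}_I,\widetilde{\b\xi}_h)$ up to interpolation error. First I would write $\widetilde{\b u}_h=\widetilde{\b\xi}_h+\widetilde{\b u}_I$ and use the triangle-type inequality for the (power-type) quantity $S^E(\cdot,\cdot)$, which by Lemma \ref{Lem:vemstab-prel} behaves like $h_E^{2-r}|\DOF(\cdot)|^r$, so $S(\widetilde{\b u}_h,\widetilde{\b u}_h)\lesssim S(\widetilde{\b\xi}_h,\widetilde{\b\xi}_h)+S(\widetilde{\b u}_I,\widetilde{\b u}_I)$. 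The second term is handled immediately by Lemma \ref{Lem:vemstab-2} (note $\Pi^{0,E}_k\widetilde{\b u}_I=0$), a triangle inequality in $\b W^{1,r}$, and Lemma \ref{lem:approx-interp}, exactly as in \eqref{eq:T2B}, giving the contribution $h^{k_1 r}R_1^r$. So it remains to bound $S(\widetilde{\b\xi}_h,\widetilde{\b\xi}_h)$.

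For that term, I would apply the global strong-monotonicity bound \eqref{eq:monotonicity:global_stab2} with $\b u_h\leftarrow\widetilde{\b u}_h$ and $\b w_h\leftarrow\widetilde{\b u}_I$ (both satisfy the hypothesis $\Pi^0_k(\cdot)=0$ since $\widetilde{\b v}=(I-\Pi^0_k)\b v$), and also Corollary \ref{Cor:vemstab} to pass between $S^E$ and $\|\b\epsilon(\cdot)\|_{\mathbb{L}^r}^r$. This gives
$$
S(\widetilde{\b\xi}_h,\widetilde{\b\xi}_h)\lesssim\Vert\b\epsilon(\widetilde{\b\xi}_h)\Vert^2_{\mathbb{L}^r(\Omega_h)}\lesssim\bigl(S(\widetilde{\b u}_h,\widetilde{\b\xi}_h)-S(\widetilde{\b u}_I,\widetilde{\b\xi}_h)\bigr)^{r/2}\bigl(\Vert\b\epsilon(\widetilde{\b u}_h)\Vert^r_{\mathbb{L}^r(\Omega_h)}+\Vert\b\epsilon(\widetilde{\b u}_I)\Vert^r_{\mathbb{L}^r(\Omega_h)}\bigr)^{(2-r)/2},
$$
so the job reduces to bounding the ``total mass'' factor $\Vert\b\epsilon(\widetilde{\b u}_h)\Vert^r_{\mathbb{L}^r(\Omega_h)}+\Vert\b\epsilon(\widetilde{\b u}_I)\Vert^r_{\mathbb{L}^r(\Omega_h)}$. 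Here I would use the $W^{1,r}$-stability of $\Pi^0_k$ (hence of $I-\Pi^0_k$), the a-priori bound \eqref{eq:a-priori.disc.u} for $\b u_h$, the interpolation bound in Lemma \ref{lem:approx-interp}, and the continuous a-priori bound \eqref{eq:a-priori_cont.u} with $\delta=0$, to conclude that this factor is $\lesssim\mathcal N(\b f,\b g)^{r/(r-1)}+h^{k_1 r}R_1^r\lesssim 1$ with hidden constant depending on $\mathcal N(\b f,\b g)$ (consistently with the statement of Proposition \ref{prop:main}). Plugging this in and using Young's inequality with exponents $(2/r,2/(2-r))$ on the product $\bigl(S(\widetilde{\b u}_h,\widetilde{\b\xi}_h)-S(\widetilde{\b u}_I,\widetilde{\b\xi}_h)\bigr)^{r/2}\cdot 1$ yields
$$
S(\widetilde{\b\xi}_h,\widetilde{\b\xi}_h)\lesssim S(\widetilde{\b u}_h,\widetilde{\b\xi}_h)-S(\widetilde{\b u}_I,\widetilde{\b\xi}_h)+(\text{absorbable/lower-order term}),
$$
and combining with the $S(\widetilde{\b u}_I,\widetilde{\b u}_I)$ estimate above gives the claim.

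The main obstacle I anticipate is making the ``total mass'' factor genuinely harmless: one must be careful that \eqref{eq:monotonicity:global_stab2} is not vacuous (the factor could a priori be zero) and that the hidden constant is allowed to depend on $\mathcal N(\b f,\b g)$ but not on $h$ — this is precisely why the a-priori bounds \eqref{eq:a-priori_cont.u} and \eqref{eq:a-priori.disc.u} are invoked, and why one assumes $\mathcal N(\b f,\b g)>0$ (the case $\mathcal N(\b f,\b g)=0$ being trivial since then $\b u=\b u_h=\b 0$). A secondary technical point is the sign/monotonicity bookkeeping: one should check that $S(\widetilde{\b u}_h,\widetilde{\b\xi}_h)-S(\widetilde{\b u}_I,\widetilde{\b\xi}_h)\ge 0$ (which follows from \eqref{eq:monotonicity:global_stab2} itself), so that raising it to the power $r/2$ and applying Young's inequality is legitimate. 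Everything else is a routine assembly of the lemmas already established in Section \ref{sec:stab-form}.
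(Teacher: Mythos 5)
Your overall architecture (split off $S(\widetilde{\b u}_I,\widetilde{\b u}_I)$, which is $\lesssim h^{k_1 r}R_1^r$ exactly as in \eqref{eq:T2B}, and control the remaining piece by the monotonicity difference) is reasonable, but the central step fails. After invoking \eqref{eq:monotonicity:global_stab2} you must deal with the product $D^{r/2}\cdot M^{(2-r)/2}$, where $D:=S(\widetilde{\b u}_h,\widetilde{\b\xi}_h)-S(\widetilde{\b u}_I,\widetilde{\b\xi}_h)$ and $M$ is the total-mass factor. You bound $M\lesssim \mathcal N(\b f,\b g)^{r/(r-1)}+h^{k_1 r}R_1^r\lesssim 1$ and then apply Young's inequality with exponents $(2/r,2/(2-r))$ to $D^{r/2}\cdot 1$. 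But this gives $D^{r/2}\le \tfrac r2 D+\tfrac{2-r}{2}C^{2/(2-r)}$, whose second term is a fixed $O(1)$ constant (depending on $\mathcal N(\b f,\b g)$), \emph{not} a quantity of order $h^{k_1 r}R_1^r$. It cannot be absorbed into the left-hand side (it does not contain $S(\widetilde{\b\xi}_h,\widetilde{\b\xi}_h)$) and it is not lower order, so the resulting estimate $S(\widetilde{\b u}_h,\widetilde{\b u}_h)\lesssim D+O(1)$ is strictly weaker than the lemma and useless for the pressure estimate in Proposition \ref{prop:main:2}, where $S(\widetilde{\b u}_h,\widetilde{\b u}_h)$ must tend to zero. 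The route can be repaired: bound $M\lesssim \Vert\b\epsilon(\widetilde{\b\xi}_h)\Vert^r_{\mathbb{L}^r(\Omega_h)}+h^{k_1 r}R_1^r$ (using $\widetilde{\b u}_h=\widetilde{\b\xi}_h+\widetilde{\b u}_I$ and Lemma \ref{lem:approx-interp} for $\widetilde{\b u}_I$, rather than the $O(1)$ a-priori bounds), set $X:=\Vert\b\epsilon(\widetilde{\b\xi}_h)\Vert^r_{\mathbb{L}^r(\Omega_h)}$ and distinguish the cases $X\le h^{k_1 r}R_1^r$ and $X> h^{k_1 r}R_1^r$; in the latter the inequality $X\lesssim D^{r/2}X^{(2-r)/2}$ self-improves to $X\lesssim D$. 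As written, however, the proposal does not establish the claim. (There is also a small exponent slip, $\Vert\b\epsilon(\widetilde{\b\xi}_h)\Vert^2$ versus $\Vert\b\epsilon(\widetilde{\b\xi}_h)\Vert^r$, in the displayed chain, but that is cosmetic.)

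For comparison, the paper avoids the global mass factor entirely by working elementwise with the nonlinear identity $|\DOF(\widetilde{\b u}_h)|^r=\widehat{\b\sigma}(\DOF(\widetilde{\b u}_h))\cdot\DOF(\widetilde{\b u}_h)$ with $\widehat{\b\sigma}(\b x)=|\b x|^{r-2}\b x$, splitting $\widehat{\b\sigma}(\DOF(\widetilde{\b u}_h))=\bigl(\widehat{\b\sigma}(\DOF(\widetilde{\b u}_h))-\widehat{\b\sigma}(\DOF(\widetilde{\b u}_I))\bigr)+\widehat{\b\sigma}(\DOF(\widetilde{\b u}_I))$, and applying the \emph{shifted} Young inequalities of Lemma \ref{lem:young_shifted} together with Lemma \ref{lm:Hirn}. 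This produces on each element a term $\varepsilon\,h_E^{2-r}|\DOF(\widetilde{\b u}_h)|^r$ that is absorbed into $S(\widetilde{\b u}_h,\widetilde{\b u}_h)$ on the left for $\varepsilon$ small, plus exactly the two sums appearing in \eqref{eq:T20}, which are then controlled by \eqref{eq:T2A} (local strong monotonicity, no mass factor) and \eqref{eq:T2B}. The additive structure of the shifted Young inequality is precisely what removes the multiplicative $O(1)$ factor that derails your argument.
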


\begin{proof}
Let $\widehat{\b \sigma}(\b x):= |\b x|^{r-2} \b x$ for any $\b x \in \R^{N_E}$, where $N_E$ denotes the number of local DoFs (cf. Section~\ref{sub:forms}). 
Then simple computations yield
\begin{equation}
    \label{eq:lemmaB0}
    \begin{aligned}
S(\widetilde{\b u}_h, \widetilde{\b u}_h) &= 
\sum_{E \in \Omega_h} h_E^{2-r} |\DOF(\widetilde{\b u}_h)|^r =
\sum_{E \in \Omega_h} h_E^{2-r} \widehat{\b \sigma}(\DOF(\widetilde{\b u}_h)) \cdot \DOF(\widetilde{\b u}_h)
\\
&=
\sum_{E \in \Omega_h} h_E^{2-r}  \bigl(\widehat{ \b \sigma}(\DOF(\widetilde{\b u}_h)) -
\widehat{\b \sigma}(\DOF(\widetilde{\b u}_I)) \bigr)\cdot \DOF(\widetilde{\b u}_h) +
\sum_{E \in \Omega_h} h_E^{2-r} \widehat{\b \sigma} (\DOF(\widetilde{\b u}_I)) \cdot \DOF(\widetilde{\b u}_h)
\\
& =: T_1 + T_2 \,.
    \end{aligned}
\end{equation}
In the following $C$ will denote a generic
positive constant independent of $h$ that may change at each occurrence, whereas the parameter $\varepsilon$ adopted in \eqref{eq:lemmaBT1} and \eqref{eq:lemmaBT2} will be specified later.
Using Lemma \ref{lem:young_shifted} and Lemma \ref{lm:Hirn} we infer
\begin{equation}
    \label{eq:lemmaBT1}
    \begin{aligned}
T_1 & \le 
    C \sum_{E \in \Omega_h} h_E^{2-r} \varphi'_{|\DOF(\widetilde{\b u}_h)|}(|\DOF(\widetilde{\b \xi}_h)|) \,  |\DOF(\widetilde{\b u}_h)| 
    & \quad & \text{(by \eqref{eq:extrass.continuity})}
    \\
    & \le
    \varepsilon 
    \sum_{E \in \Omega_h} h_E^{2-r}
    \varphi_{|\DOF(\widetilde{\b u}_h)|}(|\DOF(\widetilde{\b u}_h)|)  +
    C(\varepsilon) \sum_{E \in \Omega_h} h_E^{2-r}
    \varphi_{|\DOF(\widetilde{\b u}_h)|}(|\DOF(\widetilde{\b \xi}_h)|)
    & \quad & \text{(by \eqref{eq:Young1})}
    \\
    &\le
    \gamma \varepsilon 
    \sum_{E \in \Omega_h} h_E^{2-r} |\DOF(\widetilde{\b u}_h)|^r + 
    C(\varepsilon) \sum_{E \in \Omega_h} h_E^{2-r}
    (|\DOF(\widetilde{\b u}_h)| + |\DOF(\widetilde{\b u}_I)|)^{r-2} \, |\DOF(\widetilde{\b \xi}_h)|^2
    & \quad & \text{(by \eqref{eq:extrass.monotonicity})}
    \\
    &=
    \gamma \varepsilon S(\widetilde{\b u}_h, \widetilde{\b u}_h)  + 
    C(\varepsilon) \sum_{E \in \Omega_h} h_E^{2-r}
    (|\DOF(\widetilde{\b u}_h)| + |\DOF(\widetilde{\b u}_I)|)^{r-2} \, |\DOF(\widetilde{\b \xi}_h)|^2 \,,    
    \end{aligned}
\end{equation}
where in the last line $\gamma$ denotes the uniform hidden positive constant from Lemma \ref{lm:Hirn}.
Using analogous arguments we have
\begin{equation}
    \label{eq:lemmaBT2}
    \begin{aligned}
T_2 & \le 
    C \sum_{E \in \Omega_h} h_E^{2-r} \varphi'(|\DOF(\widetilde{\b u}_I)|) \,  |\DOF(\widetilde{\b u}_h)| 
    & \quad & \text{(by \eqref{eq:extrass.continuity})}
    \\
    & \le
    \varepsilon 
    \sum_{E \in \Omega_h} h_E^{2-r}
    \varphi(|\DOF(\widetilde{\b u}_h)|)  +
    C(\varepsilon) \sum_{E \in \Omega_h} h_E^{2-r}
    \varphi(|\DOF(\widetilde{\b u}_I)|)
    & \quad & \text{(by \eqref{eq:Young1})}
    \\
    &\le
    \gamma \varepsilon 
    \sum_{E \in \Omega_h} h_E^{2-r} |\DOF(\widetilde{\b u}_h)|^r + 
    C(\varepsilon) \sum_{E \in \Omega_h} h_E^{2-r}
    |\DOF(\widetilde{\b u}_I)|^{r} 
    & \quad & \text{(by \eqref{eq:extrass.monotonicity})}
    \\
    &=
    \gamma \varepsilon S(\widetilde{\b u}_h, \widetilde{\b u}_h)  + 
    C(\varepsilon) \sum_{E \in \Omega_h} h_E^{2-r}
    |\DOF(\widetilde{\b u}_I)|^{r}  \,,    
    \end{aligned}
\end{equation}
Taking in \eqref{eq:lemmaBT1} and \eqref{eq:lemmaBT2} $\epsilon = \frac{1}{4 \gamma}$, from \eqref{eq:lemmaB0} we obtain
\[
S(\widetilde{\b u}_h, \widetilde{\b u}_h) \lesssim
\sum_{E \in \Omega_h} h_E^{2-r}
    (|\DOF(\widetilde{\b u}_h)| + |\DOF(\widetilde{\b u}_I)|)^{r-2} \, |\DOF(\widetilde{\b \xi}_h)|^2  +
\sum_{E \in \Omega_h} h_E^{2-r}
    |\DOF(\widetilde{\b u}_I)|^{r} \,.   
\] 
We now notice that the right-hand side in the previous bound coincides, up to a multiplicative constant,  with the right-hand side in \eqref{eq:T20}. Therefore employing \eqref{eq:T2A} and \eqref{eq:T2B} we finally obtain
\[
S(\widetilde{\b u}_h, \widetilde{\b u}_h) \lesssim
  S(\widetilde{\b u}_h, \widetilde{\b \xi}_h) -
  S(\widetilde{\b u}_I, \widetilde{\b \xi}_h) +
  h^{k_1 r} R_1^r \,.
\] 
\end{proof}
\end{document}